\documentclass[11pt]{amsart}

\usepackage{amssymb,verbatim}
\usepackage{amsmath,amsfonts}
\usepackage[mathscr]{euscript}
\usepackage{amsthm}
\usepackage[english]{babel}
\usepackage{latexsym}
\usepackage{amscd, epsfig}
\usepackage{graphicx}
\usepackage{tikz}
\newcommand{\tancirc}{\mathchoice
 {\raisebox{0.06em}{\tikz[scale=0.095]{\draw[line width=0.45pt] (0,0) circle (1); \draw[line width=0.45pt] (0.45,0) circle (0.55);}}}
 {\raisebox{0.06em}{\tikz[scale=0.095]{\draw[line width=0.45pt] (0,0) circle (1); \draw[line width=0.45pt] (0.45,0) circle (0.55);}}}
 {\raisebox{0.03em}{\tikz[scale=0.07]{\draw[line width=0.35pt] (0,0) circle (1); \draw[line width=0.35pt] (0.45,0) circle (0.55);}}}
 {\raisebox{0.02em}{\tikz[scale=0.056]{\draw[line width=0.3pt] (0,0) circle (1); \draw[line width=0.3pt] (0.45,0) circle (0.55);}}}}
\usetikzlibrary{arrows.meta,decorations.pathreplacing,calc}
\usepackage{ifthen}
\usepackage{url}
\usepackage{enumitem}
\usepackage{hyperref}
\hypersetup{colorlinks}
\usepackage{bbm}
\usepackage{bm}
\usepackage{mathrsfs}
\usepackage{cancel}

\usepackage{hyperref}
\hypersetup{colorlinks}
\RequirePackage{cleveref}
\usepackage{hypcap}

\definecolor{darkblue}{rgb}{0.0,0,0.7}
\newcommand{\darkblue}{\color{darkblue}}

\definecolor{darkred}{rgb}{0.68,0,0}

\definecolor{darkgreen}{rgb}{0,.38,0}

\definecolor{magenta}{rgb}{.51, 0, .51}

\hypersetup{colorlinks=true, citecolor=darkblue, linkcolor=darkblue}

\newcommand{\defn}[1]{\emph{\darkblue #1}}
\newcommand{\defna}[1]{\emph{\darkblue #1}}

\setlist[enumerate]{
	label=\textnormal{({\roman*})},
	ref={\roman*}}

\newcommand{\stepseqdiagram}[2]{%
\begin{tikzpicture}[x=1.12cm,y=0.42cm,>={Stealth[length=4.5pt]},baseline]
  \foreach \y in {2,4,6,8} {\draw[gray!14] (-0.15,\y) -- (8.15,\y);}
  \foreach \y in {1,...,8} {\node[gray!55,font=\tiny] at (-0.5,\y) {\y};}
  \foreach \x/\s/\e in {#1} {
     \ifnum\s=\e
        \draw[->,thick] (\x+0.05,\s) -- (\x+0.9,\e);
     \else
        \draw[->,thick] (\x+0.09,\s) -- (\x+0.9,\e);
     \fi
     \fill (\x+0.05,\s) circle (1.2pt);
     \node[font=\scriptsize\sffamily] at (\x+0.5,-1.5) {$a_{\s\e}$};
     \node[gray!65,font=\tiny] at (\x+0.5,-0.55) {\s};
  }
  #2
\end{tikzpicture}}

\newcommand{\stepseqswap}[4]{%
\begin{tikzpicture}[x=1.05cm,y=0.4cm,>={Stealth[length=4pt]},baseline]
  \foreach \y in {2,4,6,8} {\draw[gray!14] (-0.15,\y) -- (8.15,\y);}
  \foreach \y in {1,...,8} {\node[gray!55,font=\tiny] at (-0.5,\y) {\y};}
  \foreach \x/\s/\e in {#1} {
     \ifthenelse{\x=#3 \OR \x=#4}{\def\col{red!75!black}}{\def\col{black}}
     \ifnum\s=\e
        \draw[->,thick,\col] (\x+0.05,\s) -- (\x+0.9,\e);
     \else
        \draw[->,thick,\col] (\x+0.09,\s) -- (\x+0.9,\e);
     \fi
     \fill[\col] (\x+0.05,\s) circle (1.2pt);
     \node[font=\scriptsize\sffamily] at (\x+0.5,-1.5) {$a_{\s\e}$};
     \node[gray!65,font=\tiny] at (\x+0.5,-0.55) {\s};
  }
  #2
\end{tikzpicture}}

\newcommand{\stepseqswapC}[4]{%
\begin{tikzpicture}[x=0.62cm,y=0.30cm,>={Stealth[length=6.4pt]},baseline]
  \foreach \y in {1,...,8} {\draw[gray!12] (-0.1,\y) -- (8.1,\y);}
  \foreach \y in {1,...,8} {\node[gray!55,font=\tiny] at (-0.5,\y) {\y};}
  \foreach \x/\s/\e in {#1} {
     \ifthenelse{\x=#3 \OR \x=#4}{\def\col{red!75!black}}{\def\col{black}}
     \ifnum\s=\e
        \draw[->,semithick,\col] (\x+0.06,\s) -- (\x+0.9,\e);
     \else
        \draw[->,semithick,\col] (\x+0.1,\s) -- (\x+0.9,\e);
     \fi
     \fill[\col] (\x+0.06,\s) circle (0.9pt);
     \node[font=\footnotesize\sffamily] at (\x+0.5,-1.7) {$a_{\s\e}$};
  }
  #2
\end{tikzpicture}}

\newcommand{\stepseqdiagramC}[2]{%
\begin{tikzpicture}[x=0.62cm,y=0.30cm,>={Stealth[length=3.2pt]},baseline]
  \foreach \y in {1,...,8} {\draw[gray!12] (-0.1,\y) -- (8.1,\y);}
  \foreach \x/\s/\e in {#1} {
     \ifnum\s=\e
        \draw[->,semithick] (\x+0.06,\s) -- (\x+0.9,\e);
     \else
        \draw[->,semithick] (\x+0.1,\s) -- (\x+0.9,\e);
     \fi
     \fill (\x+0.06,\s) circle (0.9pt);
     \node[font=\tiny\sffamily] at (\x+0.5,-1.6) {$a_{\s\e}$};
  }
  #2
\end{tikzpicture}}

\newcommand{\stepseqN}[3]{%
\begin{tikzpicture}[x=0.40cm,y=0.26cm,>={Stealth[length=3.4pt]},baseline]
  \foreach \y in {1,...,8} {\draw[gray!12] (-0.1,\y) -- (8.1,\y);}
  \foreach \x/\s/\e in {#1} {
     \ifthenelse{\equal{#3}{\x}}{\def\col{red!75!black}}{\def\col{black}}
     \ifnum\s=\e
        \draw[->,semithick,\col] (\x+0.06,\s) -- (\x+0.9,\e);
     \else
        \draw[->,semithick,\col] (\x+0.1,\s) -- (\x+0.9,\e);
     \fi
     \fill[\col] (\x+0.06,\s) circle (0.7pt);
  }
  #2
\end{tikzpicture}}

\newcommand{\stepseqNstepbold}[2]{%
\begin{tikzpicture}[x=0.40cm,y=0.26cm,>={Stealth[length=7.6pt]},baseline]
  \foreach \y in {1,...,8} {\draw[gray!12] (-0.1,\y) -- (8.1,\y);}
  \foreach \y in {1,...,8} {\node[gray!55,font=\tiny] at (-0.55,\y) {\y};}
  \foreach \x/\s/\e in {#1} {
     \ifnum\s=\e
        \draw[->,line width=1.1pt] (\x+0.06,\s) -- (\x+0.9,\e);
     \else
        \draw[->,line width=1.1pt] (\x+0.1,\s) -- (\x+0.9,\e);
     \fi
     \fill (\x+0.06,\s) circle (0.7pt);
  }
  #2
\end{tikzpicture}}

\newcommand{\stepseqNbold}[3]{%
\begin{tikzpicture}[x=0.40cm,y=0.26cm,>={Stealth[length=6.8pt]},baseline]
  \foreach \y in {1,...,8} {\draw[gray!12] (-0.1,\y) -- (8.1,\y);}
  \foreach \y in {1,...,8} {\node[gray!55,font=\tiny] at (-0.55,\y) {\y};}
  \foreach \x/\s/\e in {#1} {
     \ifnum\s=\e
        \draw[->,semithick] (\x+0.06,\s) -- (\x+0.9,\e);
     \else
        \draw[->,semithick] (\x+0.1,\s) -- (\x+0.9,\e);
     \fi
     \ifnum\x<#3
        \fill (\x+0.06,\s) circle (0.7pt);
     \else
        \fill (\x+0.06,\s) circle (1.45pt);
     \fi
  }
  #2
\end{tikzpicture}}

\newcommand{\stepseqNboldhl}[4]{%
\begin{tikzpicture}[x=0.40cm,y=0.26cm,>={Stealth[length=6.8pt]},baseline]
  \foreach \y in {1,...,8} {\draw[gray!12] (-0.1,\y) -- (8.1,\y);}
  \foreach \y in {1,...,8} {\node[gray!55,font=\tiny] at (-0.55,\y) {\y};}
  \foreach \x/\s/\e in {#1} {
     \ifthenelse{\equal{#4}{\x}}{\def\col{red!75!black}}{\def\col{black}}
     \ifnum\s=\e \def\xa{0.06} \else \def\xa{0.1} \fi
     \ifnum\x<#3
        \draw[->,line width=1.1pt,\col] (\x+\xa,\s) -- (\x+0.9,\e);
        \fill[\col] (\x+0.06,\s) circle (0.7pt);
     \else
        \draw[->,semithick,\col] (\x+\xa,\s) -- (\x+0.9,\e);
        \fill[\col] (\x+0.06,\s) circle (1.45pt);
     \fi
  }
  #2
\end{tikzpicture}}

\numberwithin{equation}{section}
\numberwithin{figure}{section}

\DeclareMathOperator{\cf}{{cf}}

\newcommand{\la}{\lambda}

\newcommand{\ts}{{\hskip.04cm}}

\newcommand{\rqq}{\text{rq}}

\newcommand{\zz}{\mathbb{Z}}

\newcommand{\sm}{\setminus}

\newcommand{\Om}{\Omega}

\newcommand{\ga}{\gamma}
\newcommand{\si}{\sigma}

\newcommand{\Z}{ \mathbb Z}

\newcommand{\C}{ \mathbb C}

\newcommand{\s}[1]{\mathbf{#1}}
\newcommand{\set}[1]{\{#1\}}

\DeclareMathOperator{\Cdet}{Cdet}
\DeclareMathOperator{\Mdet}{Mdet}
\DeclareMathOperator{\Vdet}{Vdet}
\DeclareMathOperator{\Cper}{Cper}
\DeclareMathOperator{\Mper}{Mper}
\DeclareMathOperator{\Vper}{Vper}
\DeclareMathOperator{\sign}{sign}
\DeclareMathOperator{\inv}{inv}

\DeclareMathOperator{\weight}{{weight}}
\DeclareMathOperator{\pweight}{{pweight}}
\DeclareMathOperator{\head}{{head}}
\DeclareMathOperator{\length}{{length}}
\DeclareMathOperator{\INV}{{INV}}

\makeatletter
\def\th@plain{%
	\thm@notefont{}
	\itshape 
}
\def\th@definition{%
	\thm@notefont{}
	\normalfont 
}
\makeatother

\newtheorem{thm}{Theorem}[section]
\newtheorem{lemma}[thm]{Lemma}

\newtheorem*{claim*}{Claim}
\newtheorem{cor}[thm]{Corollary}

\newtheorem{conv}[thm]{Convention}

\theoremstyle{definition}
\newtheorem{ex}[thm]{Example}
\newtheorem{rem}[thm]{Remark}

\newtheorem{Def}[thm]{Definition}
\newtheorem{indasm}[thm]{Inductive Assumption}

\theoremstyle{plain}
\numberwithin{figure}{section}
\numberwithin{equation}{section}

\makeatletter
\newcommand{\duline}[1]{\mathpalette\du@line{#1}}
\newcommand{\du@line}[2]{%
  \ooalign{$\m@th#1#2$\cr
    \kern0pt
    \cleaders\hbox{\kern1pt\vrule height-1.0ex depth1.2ex width1.6pt\kern1pt}\hfil
    \kern0pt\cr}}
\makeatother

\def\zz{\mathbb Z}

\def\cc{\mathbb C}

\def\qqq{\mathbb Q}

\def\ov{\overline}

\def\sm{\smallsetminus}

\def\Om{\Omega}

\def\la{\lambda}
\def\ga{\gamma}
\def\si{\sigma}

\def\cC{\mathcal C}
\def\cB{{\mathcal{B}}}

\def\cA{\mathcal A}

\def\cF{\mathscr F}

\def\cO{\mathcal O}

\def\cS{\mathcal S}

\def\<{\langle}
\def\>{\rangle}

\def\y{ {\text {\rm y}  } }

\def\Z{\zz}

\def\SL{ {\text {\rm SL} } }
\def\GL{ {\text {\rm GL} } }

\def\0{{\mathbf 0}}

\def\.{\hskip.06cm}
\def\ts{\hskip.03cm}
\def\nts{{\hspace{-.05cm}}}

\def\nts{\hskip-.04cm}

\def\bC{\emph{\textbf{\textbf{C}\ts}}}
\def\bCr{{\textbf{\textbf{C}\ts}}}

\newcommand{\pf}{\mathrm{pf}}
\newcommand{\rad}{\mathrm{rad}}

\def\.{\hskip.06cm}
\def\ts{\hskip.03cm}

\def\nin{\noindent}

\newcommand{\textsu}[1]{\textup{\textsf{#1}}}

\newcommand{\ComCla}[1]{\textup{\textsu{#1}}}
\newcommand{\sharpP}{\ComCla{\#P}}
\newcommand{\SP}{\ComCla{\#P}}

\newcommand{\GapL}{\ComCla{GapL}}

\newcommand{\NP}{\ComCla{NP}}

\renewcommand{\P}{\ComCla{P}}

\newcommand{\FP}{\ComCla{FP}}

\newcommand{\VP}{\ComCla{VP}}
\newcommand{\VBP}{\ComCla{VBP}}

\def\SP{\sharpP}
\def\poly{{\P}}

\def\bq{\textbf{{\textit{q}}}}

\newcommand{\genstirlingI}[3]{%
  \genfrac{[}{]}{0pt}{#1}{#2}{#3}%
}

\newcommand{\stirlingI}[2]{\genstirlingI{}{#1}{#2}}

\DeclareMathOperator{\Imm}{\textnormal{Imm}}

\title[Quantum determinants in polynomial time]
{Quantum determinants in polynomial time}

\author[\ts Igor Pak]{Igor Pak}
\address[Igor Pak]{Department of Mathematics, UCLA,  Los Angeles, CA 90095.}
\email{\texttt{pak@math.ucla.edu}}

\author[\ts Daniel Soskin]{Daniel Soskin}
\address[Daniel Soskin]{Department of Mathematics, UCLA,  Los Angeles, CA 90095.}
\email{\texttt{dsoskin@math.ucla.edu}}

\begin{document} \hskip.1cm

\begin{abstract}
We give an algebraic branching program of polynomial size
which computes Cayley determinant of right quantum matrices.
This is a rare example of an efficient computation of a
noncommutative determinant, and the first such example for
quantum groups.
We extend the results to the $q$-Cayley determinant
of $q$-right quantum matrices, as well as to their
multiparameter generalization.

The proofs are entirely combinatorial, as we relate
Cayley, Moore and Valiant determinants using bijections/involutions
on words.  We then employ the celebrated determinant construction
of Mahajan and Vinay (SODA'97), to obtain the results.
\end{abstract}

\maketitle


\section{Introduction} \label{s:intro}

\subsection{Foreword} \label{ss:fore}
The \emph{determinant} \ts is one of the most classical and well studied polynomials,
ever since it was introduced by Leibnitz in 1693, see e.g.\ \cite{Muir60}.
The \emph{noncommutative determinant}, first studied by Cayley
\cite{Cay45}, plays a similar role (see~$\S$\ref{ss:back-det}).
While computing commutative determinants is extremely well understood
(see~$\S$\ref{ss:back-als}),
%
computing noncommutative determinants is quite difficult, and remains
a fundamental problem in its own right.  In the complexity theoretic setting,
this problem was first considered 
by Hyafil \cite{Hya77}, building on the work of Winograd \cite{Win70}
and Strassen \cite{Str73}, and has attracted considerable attention
over the past few decades.

In a landmark paper \cite{Nis91}, Nisan established an \ts $\Om(2^n)$ \ts
lower bound for the size of the \emph{algebraic branching program} (ABP)
for computing the noncommutative $n\times n$ determinant over free algebra
\ts $\cc\<x_{11},\ldots,x_{nn}\>$.  This makes the noncommutative determinant
similar to the noncommutative permanent, for which Nisan proved the same
lower bound, and markedly different from the determinant over commutative
rings, where there is a polynomial size ABP, see e.g.\ \cite{Bur26}.

Motivated in part by the \emph{permanent vs.\ determinant paradigm} \ts introduced
by Valiant \cite{Val79}, much effort has been made to build on Nisan's paper.
Notably, Chien and Sinclair \cite{CS07} showed that the exponential lower bound
holds for matrices over the quaternion algebra, or over the algebra of \ts
$2\times 2$ matrices with entries in a field of characteristic zero
(among other examples).  They asked for which algebras does the determinant
requires large ABP size.

Since Nisan's ABP model is somewhat constrained, one can ask if there is
a polynomial size \emph{arithmetic circuit} \ts for the noncommutative determinant.
In a surprising discovery, Arvind and Srinivasan \cite{AS18} showed that
computing the noncommutative determinant is at least as hard as the permanent
(up to polynomial factors), and thus at least as hard as the commutative
permanent.
Soon after, Chien et al.~\cite{CHSS11} and Bl\"aser
\cite{Bla15} emulated Valiant's proof that $0/1$ permanent is $\SP$-complete
\cite{Val-perm}, to resolve the problem over which associative algebras \ts $\cA$ \ts
the determinant is hard. They showed that if \ts $\cA/\rad \ts \cA$ \ts is noncommutative,
then computing the determinant is \ts $\SP$-hard; it is in $\FP$
otherwise.\footnote{Here we are assuming that the algebra $\cA$ is defined over~$\qqq$.}
See also Gentry \cite{Gen14}, for a simpler approach via Barrington's theorem \cite{Bar86}.

Another motivation for studying noncommutative determinants lies in
their ability to approximate the $0/1$ permanent.  In a commutative setting,
this approach was introduced by Godsil and Gutman \cite{GG81} over the reals,
and further developed by Karmarkar et al.\ \cite{K+93} and by
Barvinok \cite{Bar99}, over the complex numbers and quaternions,
respectively.  The noncommutative version was further studied
in \cite{Bar00,CRS03,MR12}.

In a positive direction, computing noncommutative determinants clearly hit
a wall, with the progress stifled by the powerful negative results described above.
This is somewhat surprising, since there are extremely well-known noncommutative
associative algebras\footnote{All algebras in this paper are associative, so we will not
specify this in the future.}
used across representation theory and algebraic combinatorics, which are not
of the matrices-over-algebras type.

\smallskip

\subsection{Our results}\label{ss:intro-results}
In this paper, we prove that (noncommutative) determinants of quantum matrices
have a polynomial size ABP.\footnote{Over~$\qqq$, this implies that the determinants
can be computed in polynomial time.}
By a determinant we mean (the usual)
\defn{Cayley determinant} \ts of a matrix \ts $A=(a_{ij})$ \ts with
noncommutative entries:
\begin{equation}\label{eq:Cdet}
\Cdet(A)
\ := \ \sum_{\sigma \in S_n} \, (-1)^{\inv(\sigma)} \,
 a_{1 \sigma_1} \. \cdots \. a_{n \sigma_n}\..
\end{equation}
We define \emph{quantum matrices} \ts step-by-step, as they comes in flavors
of varying difficulty, and the most general results combine the flavors.

First, we consider \defn{Cartier--Foata {\rm (CF)} matrices} \cite{CF69,Foa65},
defined to have commuting elements in different rows (see a discussion
in~$\S$\ref{ss:def-basic} for our matrix notation):
\begin{equation}\label{eq:CF}
a_{\ell j} \. a_{ki} \ \,  = \ \, a_{ki} \. a_{\ell j}  \ \quad
 \text{for}  \quad i \ne j\ts.
\end{equation}
Even though the products in Cayley determinants of CF matrices have commuting
entries, the usual linear algebra technology no longer applies.

The celebrated approach by Mahajan and Vinay \cite{MV97,MV99},
gives an ABP to compute the determinant as a
commutative polynomial in matrix entries.  With some modifications
(see Remark~\ref{rem:MV-diff}), we show that this approach can be extended
to compute the Cayley determinant of CF matrices.  This is the simplest
of our results, see below.


Second, for a complex parameter \ts $q\ne 0$,
we consider \defn{$q$-CF matrices} \ts :
\begin{equation}\label{eq:qCF}
\left\{\aligned
 a_{\ell j} \. a_{ki} \ \  & = && \, a_{ki}\. a_{\ell j} \ \
 && \text{for}  \ \ i < j, \, k < \ell\ts, \\
 a_{\ell j} \. a_{ki} \ \ & = && \, q^2 \. a_{ki} \. a_{\ell j}
 \ \ && \text{for}  \ \ i < j, \, k > \ell\ts, \\
 a_{kj} \. a_{ki} \ \ & = && \, q \. a_{ki}\. a_{kj}
 \ \ && \text{for}  \ \ i < j\ts.
\endaligned\right.
\end{equation}
When \ts $q=1$ \ts we obtain the usual CF matrices.  
In this case, we compute the \ts \defn{$q$-Cayley determinant},
also called the \emph{quantum determinant}:
\begin{equation}\label{eq:qCdet}
\Cdet_q(A)
\, := \, \sum_{\sigma \in S_n} \, (-q)^{-\inv(\sigma)} \,
 a_{1 \sigma_1} \cdots a_{n \sigma_n}
\end{equation}
When \ts $q=-1$ \ts this is \defn{Cayley's permanent}.   See \cite{HK23} for the
hardness of computing \ts $\Cdet_q$ \ts at roots of unity in the commutative case.

Third, for an array \ts $\bq=(q_{ij})_{1 \le i < j \le n}$ \ts
of nonzero complex parameters, we define the  \defn{$\bq$-CF matrices} \ts
and the \defn{$\bq$-Cayley determinant}, giving a multiparameter quantum
deformations  of \eqref{eq:qCF} and \eqref{eq:qCdet}, respectively.  
See~$\S$\ref{ss:def-matrices},~$\S$\ref{ss:def-inv} for the definitions.  
%
%

Fourth and most important, we consider \defn{right-quantum {\rm (RQ)} matrices}~:
\begin{equation} \label{eq:rq}
\left\{\,\aligned
 a_{kj}\. a_{ki} \ \  & = && \, a_{ki}\. a_{kj} \ \ && \text{for}  \ \ i < j\ts, \\
 a_{ki}\. a_{\ell j} \, - \, a_{kj}\. a_{\ell i} \ \  &= && \, a_{\ell j}\. a_{ki} \, - \, a_{\ell i}\. a_{kj}
 \ \ &&\text{for}  \ \ i<j, \, k<\ell\ts.
\endaligned\right.
\end{equation}
Note that the RQ matrices generalize the CF matrices by introducing
new noncommutative relations on $2\times 2$ minors.  In this case,
we give an ABP construction to compute the Cayley determinant.

Next, we consider \defn{$q$-RQ matrices} \ts defined as a $q$-deformation
of relations \eqref{eq:rq}, and that also generalize the $q$-CF matrices:
\begin{equation} \label{eq:qRQ}
\left\{\,\aligned
 a_{kj}\. a_{ki} \ \  &=&& \, q\. a_{ki}\. a_{kj}\,, \ \ &&\text{for}  \ \quad i < j\ts,\\
 a_{kj}\. a_{\ell i} \, - \, q \. a_{ki}\. a_{\ell j} \ \  &=&& \, q^2 \ts a_{\ell i}\. a_{kj}
 \, - \,  q \. a_{\ell j}\. a_{ki}
 \ \ &&\text{for}  \ \quad i<j, \, k<\ell\ts.
 \endaligned\right.
\end{equation}
This algebra was introduced by Faddeev, Reshetikhin and Takhtajan \cite{FRT88} in
connection with quantum groups, and motivated by representation theory
and mathematical physics (see below).  When \ts $q=-1$, we obtain
\defn{antisymmetric right-quantum {\rm (ARQ)} matrices}.
Our general algorithm in this case gives unusual examples,
where the noncommutative permanent (but not necessarily the determinant!)
can be computed efficiently.

Finally, we consider the \defn{$\bq$-RQ matrices},
which give a multiparameter generalization of relations \eqref{eq:qRQ},
and which generalize the $\bq$-CF matrices.  See \eqref{eq:bq-RQ} for
the definition.  These matrices were introduced by Manin \cite{Man88,Man89},
motivated additionally by the ideas from noncommutative geometry,
in an effort to construct a noncommutative linear algebra.

\smallskip

\begin{thm}[{\rm Main theorem}{}] \label{t:main}
For all nonzero \ts $\bq=(q_{ij})$ \ts as above, there is an ABP of polynomial
size, which computes the $\bq$-Cayley determinant of \ts $\bq$-RQ matrices.
In particular, it computes the $($usual$)$ Cayley determinant of RQ matrices
and the Cayley permanent of ARQ matrices.
\end{thm}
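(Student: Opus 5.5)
The plan is to reduce, by combinatorial identities on words, the $\bq$-Cayley determinant of a $\bq$-RQ matrix to a determinant-like sum over \emph{closed walks} (clow sequences) that the Mahajan--Vinay ABP can evaluate. The reduction goes through three noncommutative determinants: Cayley, Moore (row-ordered cycle products) and Valiant (sums over clow sequences with a fixed reading order). The steps are taken in the order below, and I expect the second and third to be the substantive ones.

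\textbf{Step 1 (CF case, $\bq$ trivial).} Entries in different rows commute, so every monomial $a_{1\sigma_1}\cdots a_{n\sigma_n}$ may be rewritten as a product of cycles of $\sigma$. Each cycle is read starting at its minimal element, and the cycles are ordered by their heads. This identifies $\Cdet(A)$ with a Moore-type determinant. I then expand to clow sequences in the Mahajan--Vinay sense: sequences of closed walks $(c_1,\dots,c_k)$ with strictly increasing heads, each walk visiting only vertices $\ge$ its head, weighted by the product of the entries along the walk in traversal order. The Mahajan--Vinay sign-reversing involution pairs up clow sequences that repeat a vertex. The crux is to check that this involution preserves the noncommutative weight under the CF relations. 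It only re-splits or re-merges a walk at a repeated vertex, and the letters it moves past each other come from different rows, hence commute. The surviving clow sequences are exactly the cycle covers, so the layered ABP with states (head, current vertex, length, parity) computes $\Cdet(A)$, with edge labels multiplied in walk order.

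\textbf{Step 2 ($\bq$-CF case).} The same argument goes through, except that commuting two letters now produces a power of some $q_{ij}$. I will track a statistic on words, a $\bq$-inversion weight, and show two things. First, reordering $a_{1\sigma_1}\cdots a_{n\sigma_n}$ into cycle order yields exactly the factor that converts $(-q)^{-\inv(\sigma)}$, in its multiparameter version, into a product of local factors attached to individual steps of the walks. Second, the involution changes this statistic consistently between the paired terms. The conclusion should be that the ABP edge from $i$ to $j$, taken at a given stage relative to the current head, carries a monomial scalar in the $q_{ij}$ that depends only on local data (the head, $i$, $j$). Proving this locality is the delicate bookkeeping step.

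\textbf{Step 3 (RQ and $\bq$-RQ).} Here the entries no longer commute across rows, and the $2\times2$ minor relations hold only up to exchange terms, so I would not try to reorder letters monomial by monomial. The plan is instead to work in the free algebra modulo the relations and prove a \emph{word-level} identity. The starting point is the known fact, from Garoufalidis--L\^e--Zeilberger / Foata--Han style results, that for $\bq$-RQ matrices the Cayley determinant equals a column-sum form: $\sum_{\tau}\operatorname{sgn}_{\bq}(\tau)\,\bigl(\sum_{\sigma}\cdots\bigr)$, in which the words are reordered by columns. Using the relations \eqref{eq:bq-RQ}, any word whose row multiset is $\{1,\dots,n\}$ and whose column multiset is $\{1,\dots,n\}$ can be rewritten in terms of such words. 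I would carry this out by an involution on pairs of words that swaps adjacent letters $a_{ki}a_{\ell j}$ with $a_{kj}a_{\ell i}$, as in the MacMahon Master Theorem proofs. The aim is to show that the Valiant/clow sum computed in Step 1's walk order equals $\Cdet_{\bq}(A)$. The cancellations in this sum are no longer termwise; they come from the minor relations, whose pairs of terms exactly match the pairs produced by the Mahajan--Vinay involution, namely a re-splitting at a repeated vertex that exchanges two outgoing steps.

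The main obstacle is this last matching. Concretely, the task is to show that when two walks share a vertex, the two clow sequences related by swapping their tails correspond to words differing by one application of the second relation in \eqref{eq:qRQ}, with the correct $\bq$-factors. This has to hold even when the exchange occurs far from adjacent letters, which means the intervening letters must first be moved into place using the other relations. My first attempt is to choose the traversal order of the clows, reading walks in reverse or ordering heads decreasingly if needed, so that the two swapped letters are adjacent at the moment of exchange. I would then prove the required identity by induction on the length of the clow sequence.
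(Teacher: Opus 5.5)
You have the right overall architecture: Cayley $\to$ Moore $\to$ Valiant, Konvalinka--Pak paired swaps for Cayley $=$ Moore, and a Mahajan--Vinay ABP whose extend-edge $c\to c'$ inside a clow with head $c_0$ carries the local scalar $q^{-1}_{c_0c'}$. The gap is the step everything rests on: the vanishing in $\cB_n^{\bq}$ of the signed sum over clow sequences that are not cycle decompositions. In your Step 3 this is only hoped for, and the route you sketch is the one the paper found does not close.

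Orbits of a merge/split pairing cannot cancel individually. The sequences $(1\,2\,2\,3)(5)$ and $(1\,2\,3)(2)(5)$ contribute $a_{12}a_{23}(a_{31}a_{22}-a_{22}a_{31})a_{55}\neq 0$ in $\cB_5$. Passing to commuting variables, these are the only two clow sequences with this commutative monomial. So pairwise cancellation fails for every choice of pairing and traversal order. The crossing relation $[a_{21},a_{32}]=[a_{22},a_{31}]$ does identify this contribution with minus that of the orbit $\{(1\,2\,3\,2)(5),\,(1\,2)(2\,3)(5)\}$, but that orbit has a different shape. You would therefore need a global rule sending each orbit to another one by relation moves, staying inside the set of clow words and closing up. You give no such rule. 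As the paper notes, a sequence of such swaps applied to a pair of clow words can end in a pair of balanced words with no clow description, so an induction on length along these lines has no invariant to carry.

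The missing idea is a collective cancellation driven by a global determinantal identity rather than by individual relations. The paper groups the non-permutation clow sequences by two data: the first repeated vertex $e'$ met when reading the sequence from right to left, and the common prefix up to $e'$. Factoring out that prefix (and, for $\bq$, a common scalar) reduces every family to one whose first clow is $(h\,e'\cdots)$ and whose support has a single repeated pair $e'=e=k$. In such a family one factors out $a_{he'}$. Deleting the repeated $k$ after the head is then a sign-reversing, weight-preserving bijection onto cycle decompositions for the matrix $Q$ obtained from $A$ by replacing column $1$ with a copy of column $k$. Since $Q$ is again RQ (resp.\ $\bq$-RQ), the family sum is $-a_{he'}\Mdet(Q)=-a_{he'}\Cdet(Q)$ by Cayley $=$ Moore. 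This vanishes because the Cayley determinant of such a matrix with two equal columns is zero.

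Separately, your Step 1 claim that the Mahajan--Vinay involution only moves letters past letters of different rows is false for the original involution, which scans a clow from its head. For the clow $(1\,2\,3\,2\,4\,2)$ it extracts $(2\,3)$. The paired words are $a_{12}a_{23}a_{32}a_{24}a_{42}a_{21}$ and $a_{12}a_{24}a_{42}a_{21}a_{23}a_{32}$, which list the row-$2$ letters $a_{32},a_{42}$ in opposite orders, so they differ already in the CF algebra. The paper repairs this by scanning each clow from its right end toward the head. This guarantees that every letter the extracted cycle is moved past lies in a row outside that cycle.
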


\smallskip

This is a rare positive result for noncommutative determinants,
the first complexity result on quantum matrices in either direction,
and an unusual polynomial time result for the exact computation of the
permanent. An extensive background overview is given in the next section.

In the RQ case, the proof is based on two arguments in bijective combinatorics,
see Theorem~\ref{t:det-eq} for the main nonalgorithmic result of the paper.
First, we modify the Konvalinka--Pak bijection in \cite{KP07},
to show that the Cayley determinant is equal to the Moore determinant,
where the latter corresponds to factorizations of permutations into cycles.
Second, we build on the Mahajan--Vinay involution to show that Moore's
determinant is equal to Valiant's determinant, which is defined in
terms of \emph{clows} (closed walks), combined with algebraic properties
of RQ matrices.  In the $\bq$-RQ case, a great
deal of careful bookkeeping is also involved.

{\small
\begin{rem}\label{rem:matrices}
Some special cases of the theorem are easy to state and give striking
conclusions.  For example, one can take certain explicit $2\times 2$
matrices $a_{ij}$ which do not commute, but satisfy relations \eqref{eq:rq}.
See e.g.\ \cite{HL07-Jones}, where these matrices are given in connection
with the Jones polynomial in knot theory.  Our Main Theorem~\ref{t:main} in this
case gives a polynomial size ABP to compute the Cayley determinant of
such matrices.  This is in sharp contrast with the above mentioned result
by Chien and Sinclair \cite{CS07}, that to compute the Cayley determinant
of \emph{all} \ts such matrices one needs an exponential size ABP.
\end{rem}}

\medskip

\section{Background} \label{s:back}

\subsection{Noncommutative determinants}\label{ss:back-det}
In contrast with the commutative case, there are multiple notions of
noncommutative determinants of varying degree of generality and applicability.
The earliest definition \eqref{eq:Cdet} indeed goes back to Cayley \cite{Cay45},
and was motivated by Hamilton's work on quaternions.  To put it in historical
perspective, the \emph{Cayley--Hamilton theorem} was obtained only
in the 1850s.  We refer to \cite{Abe11} for the historical introduction.

Next, we have \defn{Moore's determinant} \ts defined by E.~H.~Moore \cite{Moo22},
in the context of Hermitian matrices over quaternions.  It is a summation over
all permutations given as an ordered product of cycles.  Moore's determinant will
play a crucial role throughout the paper, see $\S$\ref{ss:def-Moore} for the definition.
An extensive discussion of Moore's determinant in a historical context
is given in~\cite{Asl96}.

Other notable examples include the \emph{Dieudonn\'e determinant} \ts \cite{Die43}  for
matrices over skew fields, see e.g.\ \cite[$\S$20]{Dra83}, and \emph{Berezinian} \ts
for superalgebras\footnote{These are $\zz_2$-graded algebras describing supersymmetries,
originally studied in theoretical physics.} introduced by Berezin in~\cite{Ber87}.
More recently, Barvinok \cite{Bar00} introduced the \emph{symmetrized determinant} \ts 
with the goal of giving an approximation of the permanent, see also \cite{ABG26} for 
complexity aspects. 

The richest and most insightful are \emph{quasideterminants} \ts by
Gelfand and Retakh \cite{GR92,GGRW05}.  This is not a single object but
a family of rational functions in the division ring of free variables,
evaluated at noncommutative matrix entries, and related to each other by
certain ``homological relations''.  Although this definition unites many other
and underlies the philosophy of this paper, we will not need them
for our results, see however~$\S$\ref{ss:finrem-CH}

\smallskip

\subsection{Quantum matrices}\label{ss:back-quantum}
The CF matrices are sometimes called
\emph{row-commutative} \ts and \emph{partially commutative}.
They were introduced by Cartier and Foata \cite{Foa65,CF69}
in an attempt to understand, generalize and give a combinatorial
proof of the \emph{MacMahon's Master Theorem} (MMT), a classical result
in enumerative combinatorics, see e.g.\ \cite[$\S$1.2.11]{GJ83}.

Similarly, the $q$- and $\bq$-CF matrices were introduced by
Foata--Han \cite{FH07b} and by Konvalinka--Pak \cite{KP07},
respectively, as special cases of the $q$-RQ and $\bq$-RQ matrices.
The motivation was to give a combinatorial proof of the $q$-MMT
obtained earlier by Garoufalidis, L\^e and Zeilberger~\cite{GLZ06}.
Consequently, the $\bq$-MMT generalization and a bijective proof was
given in \cite{KP07}.  Another proof based on Koszul duality was given
in \cite{HL07}.  See also \cite{FH07a} for the first combinatorial
proof of the $q$-MMT.

The right-quantum (RQ) matrices form a Hopf algebra \ts $\cO_q(G)$ \ts
isomorphic to the coordinate ring of the algebraic group \ts $G=\GL(n,\cc)$.
As we mentioned above, $q$-RQ matrices were introduced in \cite{FRT88},
as a way to deform this Hopf algebra.  It is Hopf dual to the
quantum group construction by Drinfeld--Jimbo, which is a $q$-deformation
of the universal enveloping algebra \ts $U_q({{\mathfrak{gl}}})$.
This approach to define quantum matrices starting with the $R$-matrix
from the \emph{Yang--Baxter equation}, is now known as the
\emph{FRT construction}.

The quantum determinant \eqref{eq:qCdet} spans the center of the
\ts $\cO_q(G)$, so \ts $\cO_q(G)/(\Cdet_q=1)$ \ts gives a
$q$-deformation of the coordinate ring of \ts $\SL(n,\cc)$, ibid.
We refer to \cite{BG02,Kas95} for standard introductions to
the area, including the FRT construction. See also \cite{DF93} 
for a \emph{Gauss decomposition} \ts in this setting.  

Let us mention that both $q$-RQ matrices and the $q$-Cayley determinant
also appear in connection to the \emph{Geometric Complexity Theory} (GCT),
as shown by Blasiak, Mulmuley and Sohoni \cite{BMS15}.  They employed these tools
in an effort to advance the \emph{Kronecker coefficients problem} \ts that
is central in GCT, see e.g.\ an extensive discussion in \cite[$\S$6.6]{Aar16}.

The first connection between quantum determinants and
quasideterminants was given by Krob and Leclerc \cite{KL95}.
We refer to \cite{GR91,ER99} and \cite{KP07},
for explicit formulas relating the $q$- and $\bq$-Cayley determinants
with quasideterminants of $q$- and $\bq$-RQ matrices, respectively.
See also \cite[Rem.~1.3]{ESS00} for an advanced generalization. 

Computing $q$-Cayley determinants of $q$-RQ matrices has also important
applications in knot theory, as they determine the \emph{colored Jones
polynomial} \ts originally introduced in \cite{Jon87}.
Motivated by the $q$-MMT, it was proved by Huynh and L\^e \cite{HL07-Jones},
that this polynomial is the inverse of the $q$-Cayley determinant
of an almost $q$-RQ matrix with entries given by certain
$2\times 2$ matrices over $q$-differential operators
corresponding to crossing of the knot.  Another substitution
gives the Alexander polynomial (ibid.)  See also \cite{HL18} for
an implementation of this formula for knots with few crossings.

As we mentioned in the introduction, the $\bq$-RQ matrices
were defined by Manin in \cite{Man88,Man89}.
They are also called \emph{Manin} \ts and \emph{row-pseudo-commutative matrices},
and are extremely well-studied.
The \defn{$\bq$-Cayley determinant} \ts was defined in \cite[$\S$1.9]{Man89}
in terms of exterior powers, and expanded in terms of inversions in
\cite[$\S$3.5]{Man88}, see \eqref{eq:bq-Cdet}.
Manin also showed that it spans the center of the algebra of
$\bq$-RQ matrices. We refer  to \cite{CSS09,CFR09,CFRS14,JLZ24,Sil21}
for numerous algebraic and combinatorial identities for these matrices,
generalizing classical results in linear algebra.

\smallskip

\subsection{Algorithmic aspects}\label{ss:back-als}
Edmonds \cite{Edm67} was the first to rigorously analyze a version of the
Gaussian elimination algorithm over $\qqq$, and show that it runs in
polynomial time.  Kalorkoti \cite{Kal85} proved the lower bound $\Om(n^3)$
on the size of the arithmetic formula for computing the $n\times n$ determinant.

In \cite{Str73}, Strassen showed that how any such algorithm
can be made division-free, giving an algorithm for computing the
determinant in $O(n^5)$ steps,
see a discussion in \cite{BGH82}.\footnote{After several modern improvements,
the complexity of this algorithm drops to \ts $O(n^{3.37})$, see e.g.\ \cite{Urb10}.}
Berkowitz \cite{Ber84} gave the first explicit division-free algorithm to compute
the determinant.  Building on the work of Chistov and Valiant \cite{Chi85,Val92},
Mahajan and Vinay \cite{MV97} described a combinatorial algorithm compute the
determinant in time \ts $O(n^4)$.   We refer to \cite{MV99,Rote01} for historical
remarks and further references.

In \cite{Urb10}, Urba\'nska showed how the Mahajan--Vinay algorithm can
be sped up to \ts $O(n^{3.03})$, via a reduction to the matrix multiplication.
An especially simple division-free algorithm also based on matrix multiplication,
was given by Bird~\cite{Bird11}.  Most recently, starting from purely linear
algebraic considerations, Ikenmeyer \cite{Ike25} constructed a new ABP for
the determinant, and for the coefficients of the characteristic polynomial.

\smallskip

\subsection{Complexity aspects}\label{ss:back-cs}
Let \ts $\Bbbk$ \ts be a field, let
\ts $X=\{x_1,\ldots,x_k\}$ \ts be free variables, and let \ts $f\in \Bbbk\<X\>$ \ts
be a fixed polynomial of degree~$n$.
An \defn{ABP} is a directed acyclic graph \ts $G=(V,E)$ \ts with one \emph{source}~$S$ and
one \emph{sink}~$T$.  The vertices $v\in V$ are partitioned into levels $0,1,\ldots,n$,
with source a unique vertex at level $0$, and sink a unique vertex at level~$n$.
Let edges $v\to v'$ be labelled with a homogeneous linear form in~$X$.
For a path $\ga:S\to T$ in~$G$, denote by \ts $p_\ga$ the product of all
linear forms over edges in~$\ga$.  The \defn{size} of the ABP is the number
of vertices~$|V|$.

We say that $f$ is computed by the ABP, if \ts
$f=\sum_\ga p_\ga\ts$, where the summation is over all paths \ts $\ga:S\to T$.
More generally, for an algebra $\cA$ over~$\Bbbk$ generated by \ts $A=\{a_1,\ldots,a_k\}$,
an element \ts $D\in \cA$ \ts is computed by the ABP as above, if \ts $f(a_1,\ldots,a_k)=D$.
See e.g.\ \cite{Bur26} for an accessible introduction, and for the role of ABPs in
computing determinants.

The (noncommutative) arithmetic circuit is defined similarly, with operations
$(+)$ and $(\times)$ at each vertex, all of in-degree at most two, and without the
level restriction.  Obtaining (unconditional) superpolynomial circuit lower bounds
for a Cayley determinant remains a major open problem, see e.g.\ \cite[$\S$12.5]{Wig19}.
We refer to Hrube\v{s}, Wigderson and Yehudayoff
\cite{HWY11}, for a remarkable connection of this problem to a (commutative)
complexity of computing the sum-of-squares.

In \cite{HW15}, Hrube\v{s} and Wigderson extend noncommutative circuits for computation
of rational functions, including noncommutative matrix inversion closely related to
quasideterminants.  See also Garg et al.\ \cite{GGOW20}, for a
\emph{deterministic} \ts polynomial time algorithm deciding whether a symbolic matrix
in noncommutative variables is invertible over~$\qqq$.  Note that this is not the
same as deciding whether the Cayley determinant is nonzero, 
cf.~$\S$\ref{ss:finrem-R}.\footnote{Over division rings, being invertible is 
equivalent to having nonzero Dieudonn\'e determinant, see \cite{Die43,Dra83}.}

The algorithm of Mahajan and Vinay \cite{MV97} discussed above, can be viewed as an
ABP of size $O(n^3)$.  From the Boolean complexity point of view, it shows that
computing the determinant over~$\zz$ is in $\ts \GapL={\sf \#L}-{\sf \#L}$, and thus $\GapL$-complete.
Famously, Valiant showed that the determinant is complete in $\VP$ up to
quasipolynomial reductions \cite{Val79}.  Slightly relaxing the syntaxis from
formulas to \emph{weakly-skew arithmetic circuits}, gives an algebraic complexity
class $\ts\VBP$, where the determinant is complete up to polynomial reductions,
see \cite[$\S$2.5]{Bur26}.

Finally, we note that the \emph{permanent approximation problem} \ts is one
of the benchmarks in the area.  After much effort, a {\sf FPRAS} for the
problem was given by Jerrum, Sinclair and Vigoda in \cite{JSV04},
via the MCMC methods.  The simplicity of taking expectations of the
norms of random noncommutative determinants gives an alternative approach
that is yet to be proved effective.

\medskip


\section{Definitions and the setup} \label{s:def}

\subsection{Basic notation}\label{ss:def-basic}
We use \ts $[n]:=\set{1,\ldots,n}$, and let \ts $[n]^\ast$ \ts be the set
of finite sequences (words) in~$[n]$.  Similarly, we use \.
$\stirlingI{n}{2}=\big\{(i,j) \. : \. 1\le i < j\le n\big\}$.
We write all permutations $\sigma$ in the symmetric group~$S_n$ using
\emph{one-line notation}, as the word \ts $(\sigma_1,\ldots,\sigma_n)$.

In an important departure from standard conventions, we write matrix elements
in a transposed form, for example:
$$\begin{pmatrix}
    a_{11} & a_{21} & a_{31} \\
    a_{12} & a_{22} & a_{32} \\
    a_{13} & a_{23} & a_{33}
  \end{pmatrix}
$$

\medskip

\nin
The reason for this unfortunate choice is technical: it is to ensure that
the correspondence between words and paths is natural. For example,
we have the path \ts $1\to 3 \to 2\to 4$ \ts corresponds to the word
\ts $a_{13} \ts a_{32} \ts a_{24}$ \ts instead of the word \ts
$a_{31} \ts a_{23} \ts a_{42}\ts.$  Helpfully, almost all of the paper
can be read ignoring this change.\footnote{The reader who cannot stomach
this notation can pretend the matrix is given as usual, and switch the
meaning of rows/columns, with \emph{right-quantum} replaced by
\emph{left-quantum}.  In fact, Cayley in \cite{Cay45} did just that in
a similar context, leading to all sorts of confusion \cite{Abe11}.
We decided to avoid this approach, so to be able to quote unchanged 
the results from earlier papers.}

\smallskip

\subsection{Quantum matrices}\label{ss:def-matrices}
Denote by \. $\cF_n = \C\langle a_{ij} : 1 \le i, j \le n\rangle$ \. the
\emph{free associative algebra}, whose basis is the set of words in
the alphabet \ts $\{a_{ij}\}$, with concatenation as product.

Let \ts $I_{\cf}^q$ \ts be the two-sided ideals
in \ts $\cF_n$ \ts generated by the relations \eqref{eq:qCF}.
The quotient \ts $\cA_n^q=\cF_n/I^q_{\cf}$ \ts is called the
\defn{$q$-CF algebra}.  We write that two words \ts $u,v\in \cF_n$ \ts
are \emph{equal modulo} \eqref{eq:qCF} if they are equal in $\cA^q_n\ts$.

Similarly, let \ts $I_{\rqq}^q$ \ts be the two-sided ideals
in \ts $\cF$ \ts generated by the relations \eqref{eq:qRQ}.
The quotient \ts $\cB_n^q=\cF_n/I^q_{\rqq}$ \ts is called the
\defn{$q$-RQ algebra}.  We write that two words \ts $u,v\in \cF_n$ \ts
are \emph{equal modulo} \eqref{eq:qRQ} if they are equal in $\cB^q_n\ts$.


More generally, fix complex numbers \ts $q_{ij} \in \cc$, such that \ts $q_{ij} \ne 0$ \ts
for all $1 \le i < j \le n$.  Consider \defn{$\bq$-CF  matrices} \ts defined as
\begin{equation}\label{eq:bq-CF}
\smallskip
\left\{\, \aligned
 q_{k\ell} \, a_{\ell j}\. a_{ki} \ \  & = && \, q_{ij} \, a_{ki}\. a_{\ell j}\,
 \ \ &&\text{for} \ \ i < j, \ k < \ell\ts,\\
 a_{\ell j}\. a_{ki} \ \  & = && \, q_{ij} \. q_{\ell k} \, a_{ki}\. a_{\ell j}\,
 \ \ &&\text{for}
 \ \ i < j, \ k > \ell \ts,\\
 a_{kj}\. a_{ki} \ \ & = && \,q_{ij} \, a_{ki}\. a_{kj}\,,
 \ \ &&\text{for}  \ \ i < j\ts.
\endaligned\right.
\smallskip
\end{equation}

Similarly, consider \defn{$\bq$-RQ matrices} \ts defined as
\begin{equation} \label{eq:bq-RQ}
\smallskip
 \left\{\,\aligned
 a_{kj}\. a_{ki} \ \  & = && \, q_{ij}\, a_{ki}\. a_{kj}\,, \ \ && \text{for}  \ \ i < j\ts, \\
 a_{kj}\. a_{\ell i} \, - \, q_{ij}\, a_{ki}\. a_{\ell j} \,  & = &&
  \, q_{k\ell}\. q_{ij}\, a_{\ell i}\. a_{kj} \, - \,
  q_{k\ell}\, a_{\ell j}\. a_{ki}\,,
 \ \ && \text{for}  \ \ i<j, \ k<\ell\ts.
 \endaligned \right.
 \smallskip
\end{equation}

\nin
When all \ts $q_{ij}=q$ \ts we obtain $q$-CF and $q$-RQ matrices, respectively.
By analogy with the above, let \ts $I^\bq_{\cf}$ \ts and \ts $I^\bq_{\rqq}$ \ts
denote corresponding ideals in \ts $\cF_n\ts$.  Similarly, let
\ts $\cA_n^\bq=\cF_n/I^\bq_{\cf}$ \ts and \ts $\cB_n^\bq=\cF_n/I^\bq_{\rqq}$ \ts
denote the corresponding \defn{$\bq$-CF} \ts and \defn{$\bq$-RQ algebras}.

\smallskip

\subsection{Inversions and $\bq$-Cayley determinant} \label{ss:def-inv}
Let \. $a_{\lambda_1\mu_1} \. a_{\lambda_2\mu_2}\.\cdots$ \.
be a word in \ts $\cF_n\ts$.
We abbreviate this product as \ts $a_{\lambda,\mu}$ \ts, where
\ts $\lambda=\lambda_1\la_2 \cdots$  \ts and \ts
$\mu=\mu_1\mu_2\cdots$ \ts are words in the alphabet~$[n]$.
For a word $\nu=\nu_1\nu_2 \cdots \in [n]^\ast$, define the \defn{set of inversions}
$$\INV(\nu) \, = \, \big\{(\nu_i,\nu_j) \, : \,  i < j, \, \nu_i > \nu_j\big\},
$$
and let \ts $\inv(\nu) := |\INV(\nu)|$ \ts be the \defn{number of inversions}.
When \ts $\nu$ \ts is a permutation in~$S_n\ts$, this is the usual number of
inversions.

For all \ts $J\subseteq \stirlingI{n}{2}\ts,$ denote
$$
 \bq^{J} \, := \, \prod_{(i,j)\in J} \. q_{ji} \quad \text{and} \quad
 \bq^{-J} \, := \. \big(\bq^J\big)^{-1}.
$$
Define the \ts \defn{$\bq$-Cayley determinant} \ts by
\begin{equation}\label{eq:bq-Cdet}
\Cdet_{\bq}(A)
\, := \, \sum_{\sigma \in S_n} \, (-1)^{\inv(\si)} \bq^{-\INV(\sigma)} \,
 a_{1 \sigma_1} \cdots a_{n \sigma_n}\..
\end{equation}
Here the factors of each term are listed in \emph{column order}, the column
index running $1, \ldots, n$ from left to right.   Thus, \ts $\Cdet_\bq$ \ts is a
also called \emph{column determinant}, where the order of the factors is
essential since variable \ts $a_{ij}$ \ts do not commute.
Note also that \ts $|\INV(\si)|=\inv(\si)$ \ts and \ts $\Cdet_\bq = \Cdet_q$ \ts
when all \ts $q_{ij}\gets q$.

\smallskip
We will need the following algebraic property of the $\bq$-Cayley determinant,
generalizing the standard property for commutative determinants.

\begin{lemma}\label{lem:equal-columns}
Let \ts $A = (a_{ij})\in \cB_n$ \ts be a RQ matrix with two equal columns.
Then \ts $\Cdet(A)=0$.  More generally, let \ts $A = (a_{ij})\in \cB^\bq_n$ \ts
be a $\bq$-RQ matrix with two equal columns.  Then \ts $\Cdet(A)=0$.
\end{lemma}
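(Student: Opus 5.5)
The plan is to avoid pairing permutations term-by-term, since the factors between the two equal columns do not commute with the entries of those columns. Instead I would follow Manin's exterior-algebra interpretation of $\Cdet_\bq$. Recall that in our transposed notation, column $k$ consists of the entries $a_{k1},\ldots,a_{kn}$, and the first relation in \eqref{eq:bq-RQ} says that entries of the same column $\bq$-commute.

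\emph{Step 1: the Grassmann algebra.} Introduce auxiliary $\bq$-Grassmann variables $\psi_1,\ldots,\psi_n$. They commute with all $a_{ij}$ and satisfy
$$\psi_i^2=0, \qquad \psi_j\psi_i=-q_{ij}^{-1}\psi_i\psi_j \quad (i<j),$$
with the exponent convention to be fixed so that it matches $\bq^{-\INV(\si)}$ in \eqref{eq:bq-Cdet}. Put
$$x_k:=\sum_{j} a_{kj}\,\psi_j .$$
Expanding $x_1x_2\cdots x_n$, only terms with distinct $\psi$-indices survive, and these are indexed by $\si\in S_n$. Reordering $\psi_{\si_1}\cdots\psi_{\si_n}$ into $\psi_1\cdots\psi_n$ produces the sign and $\bq$-weight in \eqref{eq:bq-Cdet}, so
$$x_1\cdots x_n=\Cdet_\bq(A)\,\psi_1\cdots\psi_n .$$
Moreover $\psi_1\cdots\psi_n$ is a free generator of the top degree over $\cB_n^\bq$, so it suffices to show that the product $x_1\cdots x_n$ vanishes.

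\emph{Step 2: translating the relations.} Next I would show that \eqref{eq:bq-RQ} is equivalent to two statements about the $x_k$.
\begin{enumerate}
\item $x_k^2=0$. Expanding, $x_k^2=\sum_{i<j}\bigl(a_{ki}a_{kj}\psi_i\psi_j+a_{kj}a_{ki}\psi_j\psi_i\bigr)$. By the first relation together with the $\psi$-relation, each bracket cancels.
\item $x_\ell x_k=c_{k\ell}\,x_kx_\ell$ for $k<\ell$, with an explicit scalar $c_{k\ell}$ (something like $-q_{k\ell}^{\pm1}$). Here one compares coefficients of $\psi_i\psi_j$ for $i<j$; this yields exactly the second relation in \eqref{eq:bq-RQ}. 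The diagonal terms $i=j$ vanish because $\psi_i^2=0$.
\end{enumerate}
This coefficient comparison is the one genuine computation. I expect it to be the main obstacle, purely as bookkeeping: choosing the $\psi$-convention and the constant $c_{k\ell}$ so that the four-term relation matches exactly, including the factors $q_{k\ell}$ versus $q_{ij}$.

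\emph{Step 3: conclusion.} Suppose columns $k<\ell$ are equal, so $a_{kj}=a_{\ell j}$ for all $j$ and hence $x_k=x_\ell=:x$. Using (ii) for the pairs $(m,\ell)$ with $k<m<\ell$, move $x_\ell$ leftward past $x_{\ell-1},\ldots,x_{k+1}$. This picks up only a nonzero scalar, and gives
$$x_1\cdots x_n=c\cdot x_1\cdots x_{k-1}\,x\,x\,x_{k+1}\cdots\widehat{x_\ell}\cdots x_n .$$
The right-hand side is $0$ by (i). Therefore $\Cdet_\bq(A)\,\psi_1\cdots\psi_n=0$, and so $\Cdet_\bq(A)=0$.

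The RQ case is the specialization $q_{ij}=1$, where the $\psi_i$ become ordinary anticommuting Grassmann variables.
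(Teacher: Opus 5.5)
Your argument is correct. It is the standard quantum exterior algebra proof (going back to Manin, and used in the Chervov--Falqui--Rubtsov and Garoufalidis--L\^e--Zeilberger references). The paper does not reproduce a proof of its own: it delegates this lemma to the cited literature and calls it straightforward.

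The bookkeeping you flagged closes with exactly the convention you wrote. The coefficient of $\psi_i\psi_j$ ($i<j$) in $x_kx_\ell$ is $a_{ki}a_{\ell j}-q_{ij}^{-1}a_{kj}a_{\ell i}$. Dividing the second relation in \eqref{eq:bq-RQ} by $-q_{ij}$ then gives $x_kx_\ell=-q_{k\ell}\,x_\ell x_k$, so $c_{k\ell}=-q_{k\ell}^{-1}$.

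The freeness of $\psi_1\cdots\psi_n$, which you assert without proof, does hold. Ordered monomials form a basis of the quantum exterior algebra for any nonzero $q_{ij}$, by the diamond lemma.
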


Versions of this results are given across the literature.
For $q$-RQ matrices, see e.g.\ \cite[Cor.~1(1)]{CFR09}, \cite[Prop.~3.3(3)]{CFRS14}, \cite[$\S$2]{FH07a},
and \cite[Lemma~2.5]{GLZ06}.  For general $\bq$-RQ matrices, see \cite[Lemma~12.2(2) and~12.4]{KP07}
and \cite[Cor.~3.8]{Sil21}.  The proofs are straightforward.


\smallskip

\subsection{Cycle decompositions} \label{ss:def-cycle}
We write every permutation \ts $\si\in S_n$ \ts as a product of disjoint cycles.
We obtain a canonical form of a cycle decomposition \ts $\bC=\bC(\si)$ \ts by
letting each cycle start at its smallest element, which is called the
head of the cycle, and by rearranging the cycles in the increasing
order of their heads.

Formally, a \defn{cycle} \. $C=(c_1, c_2, \ldots, c_i)$ \ts
of length~$i$ is a sequence of \ts $i \ge 1$ \ts
distinct elements in~$[n]$, where the \defn{head} \ts $c_1=\head(C)$ \ts is the
smallest element in~$C$.  The \defn{weight} of~$C$
is the product
\begin{equation}\label{eq:weight-C}
 \weight(C) \, := \, a_{c_1 c_2} \ts a_{c_2 c_3} \cdots a_{c_i c_1}\ts,
\end{equation}
which we view as the product of the weights of \defn{arcs} (directed edges)
in the cycle.  Note that the ordering is crucial here, as the variables
are noncommutative.

A \defn{cycle decomposition} \. $\bC=\bC(\si)$ \ts of a permutation \ts $\si\in S_n$ \ts
is a sequence \ts $\bC=(C_1, \ldots, C_k)$ \ts of disjoint cycles with increasing heads:
\begin{equation} \label{eq:clow-seq}
\head(C_1) \, < \, \ldots \, < \, \head(C_k)\ts.
\end{equation}
Let \ts $k(\bC)$ \ts denote the number of cycles in~$\bC$.
The \defn{weight} \ts and \ts \defn{sign} \ts of \ts $\bC$ \ts are defined as
\begin{equation}\label{eq:weight-product}
\weight(\bC) \, := \, \weight(C_1) \. \cdots\. \weight(C_k)\,, \qquad
 \sign(\bC) \, := \, \sign(\sigma) \, = \, (-1)^{n-k}.
\end{equation}
Denote by \ts $\cS_n=\{\bC(\si) \. : \. \si\in S_n\}$ \ts the set of
cycle decompositions of length~$n$.

For a cycle decomposition \ts $\bC\in \cS_n$ \ts, denote by \ts $\la^\bC$ \ts and \ts $\mu^\bC$ \ts
the corresponding words in \ts $[n]^n$.  We will need the following simple observation.
\smallskip

\begin{lemma}\label{l:q-inv} \.
$\inv\nts\big(\mu^\bCr\big)-\inv\nts\big(\la^\bCr\big) \. = \. n-k(\bCr)$.
\end{lemma}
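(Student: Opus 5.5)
We first fix the words. For $\bC=(C_1,\ldots,C_k)$ with $C_r=(c^r_1,\ldots,c^r_{i_r})$, the weight is the product of arcs $a_{c^r_1c^r_2}\cdots a_{c^r_{i_r}c^r_1}$, concatenated over $r=1,\ldots,k$. Thus $\la^\bC$ is the concatenation of the words $c^r_1c^r_2\cdots c^r_{i_r}$, and $\mu^\bC$ is the concatenation of the rotated words $c^r_2\cdots c^r_{i_r}c^r_1$. Both are permutations of $[n]$, so $\inv$ counts ordinary inversions. The plan is to split the inversions of each word into pairs lying in the same cycle block and pairs lying in different blocks, and to compare the two contributions separately.

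\emph{Cross-block pairs.} Blocks occupy the same consecutive positions in $\la^\bC$ and $\mu^\bC$, namely the $r$-th block in positions $i_1+\cdots+i_{r-1}+1,\ldots,i_1+\cdots+i_r$. Within each block the set of letters is the same in both words, since rotation does not change the set. For two letters $x,y$ in different blocks $r<s$, $x$ precedes $y$ in both words, so $(x,y)$ is an inversion in $\mu^\bC$ exactly when it is one in $\la^\bC$. Hence the cross-block inversion counts coincide. Note that we do not need the increasing-heads condition here; only the block structure matters.

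\emph{Within-block pairs.} Fix a block with word $w=c_1c_2\cdots c_i$, where $c_1$ is the minimum. The rotated word is $w'=c_2\cdots c_ic_1$. Pairs not involving $c_1$ keep their relative order. In $w$, the letter $c_1$ comes first and is smallest, so it lies in no inversion. In $w'$, the letter $c_1$ comes last and is smallest, so it forms an inversion with each of the other $i-1$ letters. Therefore $\inv(w')-\inv(w)=i-1$.

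Summing over the blocks gives $\inv(\mu^\bC)-\inv(\la^\bC)=\sum_r(i_r-1)=n-k(\bC)$. The only step needing care is the block bookkeeping for cross pairs, that is, checking that positions are preserved block-by-block. This is immediate because each block is rotated in place, so I expect no real obstacle.
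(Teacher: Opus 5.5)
Your proof is correct and complete. For this lemma the paper says only ``the proof is by induction'' and omits the details. You instead give a direct count. Cross-block pairs have the same inversion status in $\la^{\bC}$ and $\mu^{\bC}$, because each block is rotated in place. Rotating a block whose first letter is its minimum adds exactly $i-1$ inversions. These are the two facts an induction on the number of cycles would have to verify at each step, so your version simply unrolls that induction. They are also, in unweighted form, the content of the paper's later proofs of Lemma~\ref{l:q-weight of clow} (within-block) and the Factorization Lemma~\ref{l:factorization} (cross-block), so your argument matches how the paper actually handles inversions.

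The direct count is explicit, and it applies verbatim to clow sequences in $\cC_n$, since the head is still the unique minimum of its block. The paper tacitly needs this when it uses $\weight_q(\bC)=q^{k(\bC)-n}\ts\weight(\bC)$ for clow sequences in \eqref{eq:q-weight}. In that setting, however, inversions must be counted as pairs of positions, not via the set $\INV(\nu)$ of value pairs. For the single clow $(1\ts 2\ts 2)$ one has $\mu=2\ts 2\ts 1$ with $|\INV(\mu)|=1$, but $n-k=2$. Your remark that $\la^{\bC}$ and $\mu^{\bC}$ are permutations is exactly what makes the two counts agree for the statement at hand.
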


\smallskip

The proof is by induction.  We omit the details. 

\smallskip

%
\begin{ex}
For a permutation \. $\si = (2,5,1,8,3,6,4,7)\in S_8$\ts, the corresponding
cycle decomposition \.
$\bC= (C_1,C_2,C_3)$, where
\ts $C_1=(1, 2, 5, 3)$, \ts $C_2=(4, 8, 7)$, \ts and $C_3=(6)$. In this case, we have: \.
$\weight(\bC) = a_{12} \. a_{25} \. a_{53} \. a_{31} \, a_{48} \. a_{87} \. a_{74} \. a_{66}$
 \. and \. $\sign(\bC) = -1$.  In the notation above, we have \ts
 $\la^\bC \ts = \ts 1 \ts 2 \ts 5 \ts 3 \ts 4 \ts 8 \ts 7 \ts 6$ \ts
and \ts $\mu^\bC \ts = \ts 2 \ts 5 \ts 3 \ts 1 \ts 8 \ts 7 \ts 4 \ts 6$.  Then \ts
$\inv(\la^\bC) = 5$ \ts and \ts $\inv\nts(\mu^\bC) = 10$.  Since \ts $n=8$ \ts and \ts
$k(\bC)=3$, lemma states in this case that \ts $8-3= 10-5$.
\end{ex}

\smallskip

\subsection{$\bq$-weights and Moore's determinants}\label{ss:def-Moore}
For an \ts $n\times n$ \ts matrix
\ts $A=(a_{ij})$, define \defn{Moore's determinant} \ts as
\begin{equation}\label{eq:Mdet}
\Mdet(A) \, := \, \sum_{\bC\in \cS_n} \. \sign(\bC) \, \weight\nts\big(\bC\big)\ts.
\end{equation}
Note that for \ts $A\in \cA_{\cf}\ts$, we trivially have \. $\Mdet(A)=\Cdet(A)$.

More generally, define the \defn{$q$-weight} \ts as
\begin{equation}\label{eq:q-weight}
\aligned
\weight_q{\nts}(\bC) \, :&=  \, q^{\inv(\la^\bC)-\inv(\mu^\bC)} \, \weight{\nts}(\bC) \\
& = \, q^{k(\bC)-n} \ts \weight{\nts}(\bC)\ts,
\endaligned
\end{equation}
where the second equality holds by Lemma~\ref{l:q-inv}.
Now define the \defn{$q$-Moore determinant} \ts as
\begin{equation}\label{eq:q-Mdet}
\Mdet_q(A) \, := \, \sum_{\bC\in \cS_n} \. \sign(\bC) \. \weight_q{\nts}(\bC)\ts.
\end{equation}

Similarly, define the \defn{$\bq$-weight} \ts as
\begin{equation}\label{eq:bq-weight}
\aligned
\weight_\bq{\nts}(\bC) \, :&=  \, \bq^{\INV(\la^\bC)} \. \bq^{-\INV(\mu^\bC)} \, \weight{\nts}(\bC)\ts,
\endaligned
\end{equation}
and note that $\bq$-weight is equal to the $q$-weight when all \ts $q_{ij}=q$.
Finally, define the \defn{$\bq$-Moore determinant} \ts as
\begin{equation}\label{eq:bq-Mdet}
\Mdet_\bq(A) \, := \, \sum_{\bC\in \cS_n} \.  \. \sign(\bC) \, \weight_\bq{\nts}\big(\bC\big)\ts.
\end{equation}


\smallskip

\subsection{Clow sequences} \label{ss:clow}

If we relax the requirement that all elements of a cycle are distinct, we
arrive at the concept of a \emph{closed ordered walk}, or \emph{clow} for
short, in the terminology of Mahajan and Vinay
\cite{MV97}.\footnote{Valiant~\cite{Val92}, who introduced this concept,
called this a \emph{$c_1$-loop}.}
Formally, a \defn{clow} \ts is a sequence \ts $C=(c_1, c_2, \ldots, c_i)$
\ts of \. $\length(C)=i \ge 1$, such that the head~$c_1$ of~$C$,
denoted \ts $\head(C)=c_1$, is the unique smallest element: \.
$c_1 < \min\{c_2, \ldots, c_i\}$.
The weight of~$C$ is again defined by \eqref{eq:weight-C}.

A \defn{clow sequence} \. $\bC=(C_1, \ldots, C_k)$ \ts is a sequence
of clows with strictly increasing sequence of heads, as in \eqref{eq:clow-seq}.
Let \ts $k(\bC)=k$ \ts denote the number of clows in~$\bC$.
The \defn{weight} \ts and the \defn{sign} \ts of~$\bC$ are defined by
\eqref{eq:weight-product}.  Similarly, \defn{length} of~$\bC$ is defined
as the sum of lengths of the clows:
$$
\length(\bC) \, := \, \.\length(C_1) \. + \, \ldots \, + \.  \length(C_k).
$$

Denote by \ts $\cC_n$ \ts the set of clow sequences of length~$n$.
By definition, every cycle decomposition as above is a clow sequence, i.e.\
$\cS_n \subseteq \cC_n\ts$.  Denote by \ts
$\cC_n^{\ts\tancirc} := \ts \cC_n \sm \cS_n$ \ts the set of clow
sequences which are not cycle decompositions.
For a clow \ts $\bC\in \cC_n$ \ts, denote by \ts $\la^\bC$ \ts and \ts $\mu^\bC$ \ts
the corresponding words in \ts $[n]^n$. Let \defn{$q$-weight} \ts and
\ts \defn{$\bq$-weight} \ts be defined by \eqref{eq:q-weight} and
\eqref{eq:bq-weight}, respectively.

\smallskip

\subsection{Valiant's determinant}\label{ss:def-Val}
Let \ts $A=(a_{ij})$ \ts be an \ts $n\times n$ \ts matrix in~$\cF_n\ts.$
By analogy with Moore's determinant, define \defn{Valiant's determinant} \ts as
\begin{equation}\label{eq:Vdet}
\Vdet(A) \, := \, \sum_{\bC\in \cC_n} \. \sign(\bC) \, \weight\nts\big(\bC\big)\ts,
\end{equation}
where the only difference with \eqref{eq:Mdet} is the summation over a larger
set of clow sequences.

Similarly, define the \defn{$q$-Valiant determinant} \ts as
\begin{equation}\label{eq:q-Vdet}
\Vdet_q(A) \, := \, \sum_{\bC\in \cC_n} \. \sign(\bC) \. \weight_q{\nts}(\bC)\ts,
\end{equation}
and the \defn{$\bq$-Valiant determinant} \ts as
\begin{equation}\label{eq:bq-Vdet}
\Vdet_\bq(A) \, := \, \sum_{\bC\in \cC_n} \.  \. \sign(\bC) \, \weight_\bq{\nts}\big(\bC\big)\ts.
\end{equation}
By design, except for the summation running over a larger set of clow sequences,
these coincide with deformations of Moore determinants given in \eqref{eq:q-Mdet}
and \eqref{eq:bq-Mdet}, respectively.

\smallskip

\subsection{Noncommutative permanents}\label{ss:def-per}
Let \ts $A=(a_{ij})$ \ts be an \ts $n\times n$ \ts matrix in~$\cF_n\ts.$
The \defn{ARQ matrices} \ts mentioned in the introduction are defined as
\begin{equation} \label{eq:ARQ}
\left\{\,\aligned
 a_{kj}\. a_{ki} \ \  &=&& \, -\ts  a_{ki}\. a_{kj}\,, \ \ &&\text{for}  \ \quad i < j\ts,\\
 a_{kj}\. a_{\ell i} \, + \, a_{ki}\. a_{\ell j} \ \  &=&& \, \ts a_{\ell i}\. a_{kj}
 \, + \, a_{\ell j}\. a_{ki}
 \ \ &&\text{for}  \ \quad i<j, \, k<\ell\ts.
 \endaligned\right.
\end{equation}
We use \ts $\cB^-_n$ \ts denote the algebra of these matrices, and note that \ts
$\cB^{-}_n =\cB^{q}_n$ \ts for $q=-1$.  This algebra goes back to Manin \cite{Man89},
and represents a typical example of quantum algebras whose properties change
at roots of unity, see e.g.\ \cite{Lus90,GK93}.

\smallskip

For a matrix \ts $A=(a_{ij})\in \cF_n$ \ts, we can now define a
(noncommutative) \defn{Cayley permanent}:
\begin{equation}\label{eq:Cper}
\Cper(A) \, := \, \sum_{\sigma \in S_n} \, a_{1 \sigma_1} \cdots a_{n \sigma_n}\.,
\end{equation}
Similarly, define (noncommutative) \defn{Moore} \ts and \defn{Valiant permanents}:
\begin{equation}\label{eq:MVper}
\Mper(A) \, := \, \sum_{\bC\in \cS_n} \, \weight{\nts}(\bC)\ts, \qquad
\Vper(A) \, := \, \sum_{\bC\in \cC_n} \, \weight{\nts}(\bC)\ts.
\end{equation}
Since \ts $\sign(\bC) = (-1)^{\inv(\si)} = (-1)^{n-k(\bC)}$ \ts for $\bC=\bC(\si)$,
substituting \ts $q=-1$ \ts in definitions \ts \eqref{eq:Cdet}, \eqref{eq:qCdet} and \eqref{eq:bq-Cdet},
gives:
\begin{equation}\label{eq:det-per}
\Cper(A) \, = \, \Cdet_{-1}(A)\ts, \qquad
\Mper(A) \, = \, \Mdet_{-1}(A)\ts, \qquad
\Vper(A) \, = \, \Vdet_{-1}(A)\ts.
\end{equation}

\medskip


\section{Proof outline} \label{s:outline}

The elaborate setup we presented allows us to reduce Main Theorem~\ref{t:main}
to two results: one combinatorial theorem and one algorithmic lemma.

\smallskip

\subsection{Equality of determinants} \label{ss:outline-det}
Given the different nature of three families of noncommutative
determinants we consider, the following result is quite unexpected.

\smallskip

\begin{thm}[{\rm \defna{Determinantal equality}\ts}{}]\label{t:det-eq}
Let \. $A=(a_{ij}) \in \cB_n^\bq$ \. be a $\bq$-RQ matrix.  Then we have:
\begin{equation}\label{eq:thm-bq-det}
\Cdet_\bq(A) \ = \ \Mdet_\bq(A) \ = \ \Vdet_\bq(A)\ts.
\end{equation}
In particular, for \ts $A\in \cB_n$ \ts we have:
\begin{equation}\label{eq:thm-det}
\Cdet(A) \ = \ \Mdet(A) \ = \ \Vdet(A)\ts.
\end{equation}
Similarly, for \ts $A\in \cB_n^-\ts$ we have:
\begin{equation}\label{eq:thm-per}
\Cper(A) \ = \ \Mper(A) \ = \ \Vper(A)\ts.
\end{equation}
\end{thm}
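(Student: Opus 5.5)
We prove the two equalities in \eqref{eq:thm-bq-det} separately, and both are proved as identities in the algebra $\cB_n^\bq$. The special cases \eqref{eq:thm-det} and \eqref{eq:thm-per} then follow by specializing $q_{ij}\gets 1$ and $q_{ij}\gets -1$, using \eqref{eq:det-per}.

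\emph{Step 1: $\Cdet_\bq(A)=\Mdet_\bq(A)$.} In each term of \eqref{eq:bq-Cdet}, the word $a_{1\si_1}\cdots a_{n\si_n}$ has row word $\la=12\cdots n$ and column word $\mu=\si$. The corresponding term of $\Mdet_\bq$ is $\weight(\bC(\si))$, with row word $\la^\bC$ and column word $\mu^\bC$. Both words use the same multiset of letters $a_{i\si_i}$, only reordered. The natural tool is the Konvalinka--Pak framework \cite{KP07}. In that framework one considers, for fixed row and column multisets, the sum over all words $a_{\la,\mu}$ weighted by $\pm\bq^{\INV(\la)}\bq^{-\INV(\mu)}$. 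One then shows that relations \eqref{eq:bq-RQ} allow row letters to be sorted, i.e.\ that the weighted sum depends only on the multiset of pairs up to the appropriate sign and $\bq$-power. Concretely, I would define a linear map that takes a word to its ``canonical column-sorted'' form. I would then prove that the second relation of \eqref{eq:bq-RQ}, applied to adjacent letters in distinct rows, preserves the signed $\bq$-weighted sum over all $\mu$ with a fixed multiset. The first relation handles letters in the same row.

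The argument is an induction on the number of adjacent transpositions needed to move from $\la^\bC$ to $12\cdots n$. At each transposition of positions $t,t+1$ with rows $k<\ell$, the relation expresses the sum of the swapped pair $a_{kj}a_{\ell i}$ and $a_{ki}a_{\ell j}$ in terms of the reversed-order pair. Summing over the involution $\si\mapsto\si\circ(t\,t{+}1)$ on columns pairs up terms, and the $\bq$-factors track exactly the change in $\INV(\la)$ and $\INV(\mu)$. Since $\si\mapsto\bC(\si)$ is a bijection $S_n\to\cS_n$ and Lemma~\ref{l:q-inv} matches the signs, the equality follows. The diagonal case $i=j$ (equal columns) is where Lemma~\ref{lem:equal-columns} enters: terms with two equal column indices in the intermediate sums cancel.

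\emph{Step 2: $\Mdet_\bq(A)=\Vdet_\bq(A)$.} I would adapt the Mahajan--Vinay sign-reversing involution on $\cC_n^{\tancirc}$. Take the clow sequence $\bC$ and scan it from the end, i.e.\ from the last clow backwards, locating the first point at which either a vertex repeats inside a clow or the partial suffix fails to be a union of disjoint cycles. At that point one either splits off a simple cycle as a new clow or merges it back into the previous clow. This changes $k(\bC)$ by one, so the sign flips.

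In the commutative case the weights are equal. Here the weights are words in which a cycle $a_{c_s c_{s+1}}\cdots a_{c_r c_s}$ has moved from the interior of one clow to a separate clow placed later, so the two words differ by commuting a block past other letters. The claim to prove is that these two words become equal in $\cB^\bq_n$ after multiplying by the $\bq$-factors $\bq^{\INV(\la)}\bq^{-\INV(\mu)}$. This is not true termwise in general, so I expect to need to group clow sequences with the same multiset of arcs and apply the same sorting lemma from Step~1. That lemma says the signed $\bq$-weighted sum over rearrangements respecting row order is invariant. The aim is to show that the involution's two partners lie in such groups and contribute opposite amounts.

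\emph{Main obstacle.} The hard part is Step 2. Moving a cycle block out of a clow reorders letters from rows that may interleave nontrivially, and the noncommutative $2\times 2$ relations do not give termwise equality. I would first try to prove a ``cycle extraction lemma'': the $\bq$-weighted sum over the involution's orbits matches. I would prove it by induction on the length of the extracted cycle, applying \eqref{eq:bq-RQ} one letter at a time and using Lemma~\ref{lem:equal-columns} to kill error terms with repeated columns. The $\bq$ bookkeeping in this step, tracking $\INV(\la^\bC)$ and $\INV(\mu^\bC)$ under the move, is where most care is needed.
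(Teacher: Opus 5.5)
Your scan from the end matches the paper's choice of defected clow, but Step~2 has a genuine gap. Grouping by a fixed multiset of arcs is too fine, and your ``cycle extraction lemma'' (that the $\bq$-weights match over each orbit of the involution) is false. In Example~\ref{ex:rq-joint}, the only clow sequences with arcs $a_{12},a_{22},a_{23},a_{31},a_{55}$ are $(1\,2\,2\,3)(5)$ and $(1\,2\,3)(2)(5)$, which form a single orbit. Their signed sum $a_{12}a_{23}(a_{31}a_{22}-a_{22}a_{31})a_{55}$ is nonzero in $\cB_5$. It cancels only against the orbit with arcs $a_{12},a_{21},a_{23},a_{32},a_{55}$, which differs by exchanging the ends of two letters. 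So the cancellation must be organized over families that fix only the vertex multiset. Moving an extracted cycle letter by letter through \eqref{eq:bq-RQ} produces words with no controlled description, which is why the paper abandons swapping in this step. Lemma~\ref{lem:equal-columns} is a statement about a complete Cayley determinant. It can only be used once you have assembled an entire determinant of a matrix with two equal columns, and your plan never does this.

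That assembly is the missing idea. The paper groups the non-permutation clow sequences by type and by common prefix: the clows before the defected clow, plus the path $h\to t_1\to\cdots\to t_k\to e'$. Types~1 and~2 reduce to type~3 by factoring out this common left prefix. In a type-$3$ family the only repeated vertex is $e'=e=k$, with $e'$ immediately after the head $h=1$. Factoring out $a_{1k}$ and deleting that copy of $k$ gives a sign-reversing bijection onto $\cS_n$. This identifies the remaining sum with $-\Mdet(Q)$, where $Q_{1r}=a_{kr}$ and $Q_{cr}=a_{cr}$ for $c\ge 2$. Then $\Mdet(Q)=\Cdet(Q)$ by Step~1 applied to~$Q$, and $\Cdet(Q)=0$ by Lemma~\ref{lem:equal-columns}; see Lemmas~\ref{l:type3} and~\ref{l:type3-qij}.

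This is also the only place the equal-columns lemma belongs. It plays no role in Step~1: all words there are balanced, so the two swapped letters always have distinct first indices and distinct second indices.

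Step~1 itself is the same Konvalinka--Pak strategy as the paper's, but you cannot sort each term from $\lambda^{\mathbf{C}(\sigma)}$ independently. The permutation $\sigma$ and its partner have different cycle decompositions, so they would not undergo the same swap. The crossing relation applies only to two words that agree except in the ends of the two swapped letters. You therefore need an intermediate family closed under the partner operation, with a swap rule that both partners share. The $q$-sequences and the map $\Psi$, driven by the common $p$-prefix, provide exactly this.
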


\smallskip

Note that in \eqref{eq:thm-bq-det}, both equalities are special for
$\bq$-right-quantum matrices and do not hold in greater generality.
Indeed, the Cayley and Moore determinants are summations with equal
number of product terms, but the terms are different, so the equality
fails in \ts $\cF_n\ts$ for \ts $n\ge 2$.  Worse, the summations in the Moore and Valiant
determinants have different number of terms, so the equality fails in
\ts $\cF_n$ \ts again.
As before, \eqref{eq:thm-det} and \eqref{eq:thm-per} follow
from \eqref{eq:thm-bq-det} and \eqref{eq:det-per},
by setting all \ts $q_{ij}=1$ \ts and \ts
$q_{ij}=-1$, respectively.

\smallskip

\subsection{Two combinatorial lemmas}\label{ss:outline-lemmas}
We prove Theorem~\ref{t:det-eq} in two separate lemmas,
each by a combinatorial argument.

\smallskip

\begin{lemma}[{\rm =Theorem~\ref{t:cyc-rqij}}{}]\label{lem:C-M-det}
$\Cdet_\bq(A) = \Mdet_\bq(A)$ \. for all \. $A \in \cB_n^\bq$\ts.
\end{lemma}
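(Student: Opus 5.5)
We plan to prove Lemma~\ref{lem:C-M-det} by adapting the Konvalinka--Pak bijective framework of \cite{KP07} for the $\bq$-MacMahon Master Theorem. The key observation is that both sides are sums over $S_n$ of words with $\la\in[n]^n$ a rearrangement of $12\cdots n$ and $\mu$ a rearrangement of the same multiset. In $\Cdet_\bq$ the row word is $\la=12\cdots n$ and $\mu=\si$. In $\Mdet_\bq$ the word $a_{\la^\bC,\mu^\bC}$ instead has $\mu$ determined by $\la$ through the cycle structure. So the plan is to show that every word $a_{\la,\mu}$ in $\cB^\bq_n$ can be brought to a canonical form with sorted first index. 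We will track the scalar produced by each rewriting step; it should be exactly $\bq^{\INV(\la)}\bq^{-\INV(\mu)}$ relative to the sorted form, up to the sign corrections coming from the four-term relation.

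First we will reinterpret relations \eqref{eq:bq-RQ} on two-letter words $a_{ki}a_{\ell j}$. Fix the multisets $\{k,\ell\}$ and $\{i,j\}$. The first relation swaps two letters in the same row $k$ at cost $q_{ij}$. The second relation expresses a signed, $\bq$-weighted sum over the two ``column orders'' as symmetric in the two ``row orders''. As in \cite{KP07}, we encode words as pairs $(\la,\mu)$ and define, for adjacent positions, a swap operation on the $\la$-letters which permutes $\mu$ accordingly. The relations then say that the signed weighted sum
\[
\sum (-1)^{\inv(\mu)}\bq^{\INV(\la)}\bq^{-\INV(\mu)}\,a_{\la,\mu},
\]
taken over all $\mu$ in a fixed $S_2$-orbit at two adjacent positions, is invariant under the adjacent transposition of $\la$. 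We will verify this carefully for the $2\times2$ case, including the degenerate cases $k=\ell$ or $i=j$; for these, one needs Lemma~\ref{lem:equal-columns}-type cancellations, or the first relation directly.

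Second, we globalize. Consider the space of formal sums
\[
F(\la)=\sum_{\mu}(-1)^{\inv(\mu)}\bq^{\INV(\la)}\bq^{-\INV(\mu)}a_{\la,\mu}
\]
with $\mu$ ranging over the rearrangements such that $(\la,\mu)$ pairs with the cycle condition (each pair $(\la_t,\mu_t)$ forms a permutation). Using the local invariance at adjacent positions, we will show that $F(\la)$ depends only on the multiset of $\la$. Hence $\Cdet_\bq(A)=F(12\cdots n)$. To identify the Moore side we will follow \cite{KP07}: group the terms of $F$ differently, so that each $\si$ is counted once with $\la=\la^{\bC(\si)}$. For this we need a sign-preserving bijection $\si\mapsto$ (cycle word) which is compatible with adjacent swaps. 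We then match exponents using Lemma~\ref{l:q-inv} and its $\bq$-refinement $\bq^{\INV(\la^\bC)}\bq^{-\INV(\mu^\bC)}$, checking that the sign $(-1)^{n-k}$ agrees with $(-1)^{\inv(\si)}$.

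The main obstacle is the second step. The word $\la^\bC$ depends on $\si$, so $\Mdet_\bq$ is not of the form $F(\la)$ for a single $\la$. We therefore expect to need an induction on the cycles: repeatedly move the letters of the last cycle to the end using the local invariance. This is the essence of the Konvalinka--Pak ``cycle'' bijection, and we expect that the $\bq$-bookkeeping (the factors $q_{k\ell}$ appearing in the four-term relation) must be checked to telescope exactly into $\bq^{\INV(\la^\bC)}$. We will first treat $q_{ij}\equiv1$ to confirm the combinatorics, then insert the weights.
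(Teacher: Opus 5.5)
Your local step is correct, and it is exactly the move the paper uses. Take a balanced word and pair its column word $\mu$ with the word obtained by interchanging the column indices at adjacent positions $t,t+1$. The second relation in \eqref{eq:bq-RQ} then turns the signed $\bq$-weighted pair with row letters $k,\ell$ at those positions into the pair with $\ell,k$. This survives multiplication on both sides by common words and scalars.

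Two small corrections. First, with your sign $(-1)^{\inv(\mu)}$ the sum $F(\la)$ changes sign under each adjacent transposition. The invariant normalization is $(-1)^{\inv(\mu)-\inv(\la)}$, which is also what matches $(-1)^{n-k}$ by Lemma~\ref{l:q-inv}. Second, $\la$ and $\mu$ are both permutations, so the cases $k=\ell$ and $i=j$ never occur, and neither the first relation nor Lemma~\ref{lem:equal-columns} is needed.

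The genuine gap is the second step, which you flag as the main obstacle but leave as an expectation. Independence of $F(\la)$ from $\la$ concerns sums over \emph{all} $\mu$ with one fixed row word. It gives no access to $\Mdet_\bq(A)$, whose row word $\la^{\mathbf C}$ varies with $\si$.

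Nor is a term-level bijection the real issue; the identity $\si\mapsto\si$ will do. What each application of the relation needs is that the family of terms being rewritten at positions $t,t+1$ contains both partners $\mu$ and $\mu s_t$. The partner has permutation $\si$ times a transposition, which merges or splits cycles.

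This is why an induction that peels off the last cycle fails as stated. The terms whose last cycle is a given $C$ are those whose remaining permutation has every cycle head smaller than $\min C$. That family is not closed under interchanging two column indices. For example, with $C=(3)$ and remaining letters $\{1,2,4\}$, the term $(1)(2\,4)(3)$ is partnered with $(1)(2)(4)(3)$, which is not canonical.

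The missing idea is to apply your local invariance to partial sums with a fixed prefix of a canonical cycle word, working left to right.
\begin{enumerate}
\item For such a prefix $P$, let $S(P,\nu)$ be the signed $\bq$-weighted sum of all balanced words that begin with $P$, have their remaining row letters in the order $\nu$, and have their remaining column letters running over all bijections onto the unused columns.
\item This family is closed under interchanging two adjacent column indices after $P$. So your two-term computation shows that $S(P,\nu)$ is independent of $\nu$.
\item Take $\nu$ to begin with the row letter $r$ forced by $P$: the endpoint of its last step, or the smallest unused letter if that step closed a cycle. Split by the column of this step to get $S(P,\nu)=\sum_j S(P\,a_{rj},\nu')$. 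Each $P\,a_{rj}$ is again a prefix of a canonical cycle word.
\item Start from $S(\emptyset,12\cdots n)=\Cdet_\bq(A)$ and induct on the number of remaining letters. You arrive at the sum over complete cycle words, which is $\Mdet_\bq(A)$ by Lemma~\ref{l:q-inv} and \eqref{eq:bq-weight}.
\end{enumerate}
This is what the paper's $q$-sequences and the map $\Psi$ encode. The fixed $p$-prefix determines which step is moved left. Interchanging the ends of two non-prefix steps changes neither the prefix nor any starting height, so the partner is again a $q$-sequence requiring the same swap.
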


\smallskip

We prove the lemma by an explicit many-to-many combinatorial
argument generalizing bijections.  Our argument is somewhat
similar and inspired by the Konvalinka--Pak proof of the $\bq$-MMT
mentioned in~$\S$\ref{ss:back-quantum}.

The idea of the proof is as follows.  It is easier to consider RQ case first.
Both determinants are sums of products over permutations \ts $\si \in S_n$\ts,
one in natural order, and one in cyclic decomposition order.  Start
rearranging products in natural order to products in cycle
decomposition order by swapping adjacent letters, one at a time.

During the swaps, by the commutation relations \eqref{eq:rq}
products will either remain (when the letters commute),
or change (in case of the second relation).  Despite the change,
we can group products in pairs, and establish a bijection at every
swap.  Iterating the procedure gives the desired many-to-many
map proving the lemma.

In the $\bq$-RQ case, additional products of constants \ts $q_{ij}$ \ts
will emerge under every swap under relations in \eqref{eq:bq-RQ}.
Fortunately, products of these constants give the desired $\bq$-weights
as in \eqref{eq:bq-weight}, regardless of the order of swaps.

\smallskip

\begin{lemma}[{\rm =Theorem~\ref{t:clow-rqij}}{}]\label{lem:M-V-det}
$\Mdet_\bq(A) = \Vdet_\bq(A)$ \. for all \. $A \in \cB_n^\bq$\ts.
\end{lemma}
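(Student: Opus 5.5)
We need to show that the $\bq$-weighted signed sum over $\cC_n^{\ts\tancirc}$ vanishes in $\cB_n^\bq$. In the commutative case this is the Mahajan--Vinay sign-reversing involution. Here it cannot be an involution on individual words, because the two paired words differ by a reordering of letters. So the plan is to use the Mahajan--Vinay pairing to group clow sequences into small classes, and then show each class sums to zero using the relations \eqref{eq:bq-RQ}, via Lemma~\ref{lem:equal-columns}.

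\emph{Step 1 (pairing).} For $\bC=(C_1,\ldots,C_k)\in\cC_n^{\ts\tancirc}$, take the largest index $i$ such that $C_{i+1},\ldots,C_k$ are disjoint simple cycles. Walk along $C_i$ from its head until the first moment a vertex $v$ is reached that either was already visited on $C_i$ or lies in some $C_j$, $j>i$. In the first case, split off the simple subcycle at $v$ as a new cycle. In the second case, merge $C_j$ into $C_i$ at $v$. This is the Mahajan--Vinay map $\phi$. It changes $k(\bC)$ by $\pm1$, so it reverses $\sign(\bC)$, and it preserves the multiset of arcs. What changes is the \emph{order} in which the arc letters $a_{xy}$ appear in $\weight(\bC)$: a block of letters (the subcycle, or $C_j$) is moved from one position to another in the word. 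Accordingly, $\la^\bC$ and $\mu^\bC$ are rearranged by the same block move.

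\emph{Step 2 (rearrangement).} The main obstacle is to show $\weight_\bq(\bC)+\sign\cdot\weight_\bq(\phi\bC)\in I^\bq_{\rqq}$. In general this fails term by term, so instead I would show a grouped version, along the lines of Lemma~\ref{lem:C-M-det}. Move the displaced block letter by letter using the relations \eqref{eq:bq-RQ}. A swap of $a_{kj}a_{\ell i}$ either commutes up to the $\bq$-factors or produces the companion term $a_{ki}a_{\ell j}$, which exchanges the column indices of two adjacent letters. The claim to prove is that, after accumulating all swaps, the error terms assemble into sums of the form $\Cdet_\bq$ of a $\bq$-RQ matrix with two equal columns, times fixed prefix/suffix words. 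These vanish by Lemma~\ref{lem:equal-columns}.

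Concretely, the repeated vertex $v$ plays the role of the duplicated column. The letter entering $v$ occurs twice, once in the "early" position and once in the "late" position. The pairs $\bC,\phi\bC$ then differ by transposing which of the two outgoing arcs from $v$ is used first. This is exactly an antisymmetrization in the second index over a pair of equal columns. So rather than working swap by swap, I would group all clow sequences in $\cC_n^{\ts\tancirc}$ with the same prefix up to the first repetition and the same multiset of remaining words. For each such group I would write the sum as a product $u\cdot D\cdot w$, where $D$ is a $\bq$-Cayley-type alternating sum over the orders of leaving $v$, which is zero.

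\emph{Step 3 (bookkeeping).} Verify that the $\bq$-prefactors $\bq^{\INV(\la^\bC)}\bq^{-\INV(\mu^\bC)}$ of the paired terms match the factors produced by \eqref{eq:bq-RQ} under the block move. The change in $\INV(\la)$ and $\INV(\mu)$ under moving the block depends only on the multiset of letters passed, mirroring the "regardless of the order of swaps" phenomenon of Lemma~\ref{lem:C-M-det}. Also check that the heads are preserved, so that $\phi$ stays within $\cC_n^{\ts\tancirc}$. Summing over groups then gives $\Vdet_\bq(A)-\Mdet_\bq(A)=0$. The hardest point, where I expect care is needed, is Step 2: exhibiting the equal-column determinant structure exactly, including the relative $\bq$-normalization, so that Lemma~\ref{lem:equal-columns} applies.
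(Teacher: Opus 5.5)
\textbf{Gap.} Your instinct to group terms and finish with Lemma~\ref{lem:equal-columns} matches the paper. However, Step~2 carries the whole proof, and as sketched it does not go through.

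The object $D$ you describe, an alternating sum over the two orders in which the outgoing arcs of $v$ are used, is just the pair $\{\bC,\phi\bC\}$. Such pairs do not cancel in $\cB_n^\bq$. The orbits of Example~\ref{ex:rq-joint} are also Mahajan--Vinay pairs, and there each of $\weight(\bC_1')-\weight(\bC_1)$ and $\weight(\bC_2')-\weight(\bC_2)$ is nonzero; only their sum vanishes, through a crossing relation. Lemma~\ref{lem:equal-columns} kills a full alternating sum over $S_n$ of \emph{column-ordered} words $a_{1\sigma_1}\cdots a_{n\sigma_n}$. Two paired clow words are not related by exchanging second indices inside a fixed word, since a whole cycle is relocated across varying letters. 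So no factorization $u\cdot D\cdot w$ with fixed $u,w$ is available.

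Your grouping is also unsuitable with the head-first reading. If the prefix is cut at the repeat $v$, the groups are not even $\phi$-closed. For example, $(1\,2\,2\,3)(5)$ has prefix $(1\,2\,2$, while its partner $(1\,2\,3)(2)(5)$ has prefix $(1\,2$. Moreover, the six completions of $(1\,2\,2$ on $\{1,2,2,3,5\}$ sum to $a_{12}a_{22}$ times the $3\times 3$ determinant of $(a_{xy})_{x\in\{2,3,5\},\,y\in\{1,3,5\}}$, which is nonzero already for commuting entries. If instead you cut at the first occurrence of $v$, the groups are $\phi$-closed. But then the part of $C_i$ after the second visit to $v$ can contain arbitrary further repetitions, so the group sum is not a determinant of anything, and you give no reason it vanishes.

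\textbf{What is missing.} The paper reverses the reading: last clow first, each clow from its end toward its head (cf.\ Remark~\ref{rem:MV-diff}). Then everything to the right of the first repeat $e'$ is repetition-free apart from its partner $e$. The merge/split pairing only rearranges the letters after the arc entering $e'$, so the family of sequences sharing the prefix through $e'$ is closed.

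Factoring out that common prefix, together with the common scalar $\gamma$ of Lemma~\ref{l:type1-qij}, reduces types~1 and~2 to type~3. In type~3 the defected clow comes first and begins $(h\,e'\cdots)$. After removing the letter $a_{he'}$, deleting this copy of $e'=k$ gives a sign-reversing, weight-matching bijection onto $\cS_n$ for the matrix $Q$ whose first column is replaced by the $k$-th one. Hence the suffix sum is $-\Mdet_\bq(Q)$.

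The decisive point your plan lacks is that this is a \emph{Moore} (cycle-ordered) determinant, not a Cayley one. One must first apply the Cayley--Moore equality, Theorem~\ref{t:cyc-rqij}, to $Q$, and only then does Lemma~\ref{lem:equal-columns} give zero. Trying to reach a Cayley determinant by swapping letters along each pair, as your Step~2 proposes, is exactly what the paper notes breaks down at the start of $\S$\ref{ss:clow-rq}.
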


\smallskip

We prove the lemma by an explicit involution which cancels the
terms on the right, corresponding to clows which are \emph{not} \ts
cycle decompositions, thus leaving only the terms on the left.
Our argument emulates the explicit involution introduced
by Mahajan--Vinay \cite{MV97}, but with many complications
added by the noncommutative setting.

The idea of the Mahajan--Vinay involution is roughly as follows.
For a clow of length $n$ that is not a cycle decomposition,
find the ``first violation'', which in this case is a repetition
of a label in the same clow or different clows.  Setting aside the
precise meaning of the ``first violation'', which are different
in \cite{MV97} and this paper anyway, let's say the
repetition occurs in the same clow.  Extract the portion of
the clow between repeated labels, and let it form a new clow
with the head being the repeated label.
When the repetition occurs in the different clows, employ
the opposite procedure.  Since the parity of the number of
clows changes, the corresponding signs in \eqref{eq:Vdet}
are the opposite, allowing the cancellation.

Now, in the RQ case, the process of ``extracting'' a portion
of the clow is complicated by the fact that it has to be moved
across several other clows before it can be placed into
its place in the clow sequence whose heads are increasing
as in \eqref{eq:clow-seq}.  This requires a large number of
swaps which need to be made according to relations \eqref{eq:rq},
so a naive approach immediately falls apart.

We make several major changes in this approach, starting with the
notions of the ``first violation'', and modifying the two-for-two
swapping discussed above into a delicate process involving
cancellations of terms.  At the end, we reduce the problem to
computing Moore's determinant or a matrix with two equal
columns.  By Lemma~\ref{lem:C-M-det}, this reduces to Cayley's
determinant of a matrix with equal columns, which is known to
be zero by Lemma~\ref{lem:equal-columns}.

In the $\bq$-RQ case, the whole process is further complicated by
the need to involve bookkeeping of the products of constants \ts
$q_{ij}\ts$, which we ultimately do by mildly modifying the case
of RQ matrices.

\smallskip

\subsection{Proof of Main Theorem~\ref{t:main}}\label{ss:outline-DP}
The rest of the proof, at least for \ts $\bq=1$, is very close to that
in Mahajan--Vinay argument by design.

\smallskip

\begin{lemma}[{\rm \defn{Dynamic programming}, see \cite{MV97}}{}]\label{lem:DP}
For all nonzero \ts $\bq=(q_{ij})$ \ts as above, there is an ABP of size
$O(n^3)$, which computes the $\bq$-Valiant determinant in free associative
algebra $\cF_n=\cc\<a_{ij} \,:\,  1\le i,j\le n\>$.
In particular, it computes the $\bq$-Valiant determinant of $\bq$-RQ matrices,
the $($usual$)$ Valiant determinant of RQ matrices,
and the Valiant permanent of ARQ matrices.
\end{lemma}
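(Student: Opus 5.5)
We follow the Mahajan--Vinay layered construction, checking that it respects the order of noncommutative products and that the $\bq$-weights can be pushed onto individual edges. The ABP has source $S$, sink $T$, and internal vertices $[h,u,\ell]$ with $1\le h\le u\le n$ and $0\le \ell\le n$. Here $h$ is the head of the clow currently being traversed, $u$ the current vertex, and $\ell$ the number of letters read so far. There are three kinds of edges.

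\textbf{Edge types.}
\begin{enumerate}
\item From $S$: an edge $S\to[h,h,0]$ for every $h$.
\item Inside a clow: an edge $[h,u,\ell]\to[h,v,\ell+1]$ with label $a_{uv}$ whenever $v>h$.
\item Closing a clow: an edge $[h,u,\ell]\to[h',h',\ell+1]$ with label $a_{uh}$ whenever $h'>h$, and also an edge $[h,u,n-1]\to T$ with label $a_{uh}$.
\end{enumerate}

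\textbf{Bijection with clow sequences.} A path from $S$ to $T$ reads the letters of $\weight(\bC)$ from left to right. This is exactly the order in which the weight is defined in \eqref{eq:weight-C} and \eqref{eq:weight-product}, so noncommutativity is harmless: ABP path products are ordered products of edge labels. The resulting correspondence between paths and clow sequences $\bC\in\cC_n$ is a bijection, because the heads are forced to increase and each clow stays above its head. Level $\ell$ equals the path position, so the ABP is layered. The number of vertices is $O(n^3)$.

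\textbf{Signs.} The sign is $(-1)^{n-k(\bC)}$. We attach a factor $-1$ to every edge of type (ii). Each clow of length $i$ contributes $i-1$ such edges, for a total of $n-k$.

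\textbf{The $\bq$-weight.} The main point is showing that the scalar $\bq^{\INV(\la^\bC)}\bq^{-\INV(\mu^\bC)}$ factors along the path. Here $\la^\bC$ and $\mu^\bC$ are the sequences of first and second indices of the letters. An inversion of $\la^\bC$ at positions $s<t$ depends on the pair of values $(\la_s,\la_t)$, not just on position $t$. So we cannot naively charge it to step $t$ without knowing all earlier letters. There are two things to try.

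First, check whether the structure of clow sequences makes the inversion set locally determined. Every letter in a clow with head $h$ has both indices at least $h$, and every head in earlier clows is smaller than $h$. Inversions between different clows are therefore limited. For a cycle decomposition, Lemma~\ref{l:q-inv} already shows that the total count collapses to $n-k$, and the multiparameter version should telescope similarly. Concretely, $\la^\bC$ and $\mu^\bC$ differ in each clow by a cyclic shift. The contribution of a clow $(c_1,\dots,c_i)$ should then be $\prod_j q_{c_j c_1}^{\pm1}$-type factors, together with cross-clow terms. If the cross terms cancel between $\la$ and $\mu$, which is plausible since both words use the same multiset of values per clow in shifted positions, then the weight is a product over arcs of factors depending only on $(h,u,v)$. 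We would put these factors on the type (ii) and type (iii) edges.

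If instead cross terms genuinely survive, we fall back on enlarging the state. Since $\bq$ is fixed data, scalars can be absorbed into edge labels. We would track in the vertex whatever summary of past letters determines the new inversions. Only comparisons with values $\ge h$ matter, and the values before the current clow are by construction bounded in a controlled way relative to $h$. We would aim to show this summary has polynomially many values.

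\textbf{Conclusion.} Once the ABP is built in $\cF_n$, it computes $\Vdet_\bq$ as a free polynomial, and evaluating it in $\cB^\bq_n$ gives the statement. Specializing $q_{ij}=1$ gives the RQ case, and $q_{ij}=-1$ gives the ARQ case via \eqref{eq:det-per}. The expected obstacle is entirely the localization of the $\bq$-weight. The rest is routine Mahajan--Vinay bookkeeping.
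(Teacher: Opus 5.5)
Your layered graph, the bijection between $S\to T$ paths and clow sequences, and putting a factor $-1$ on every extension edge are all correct. Each clow of length $i$ has $i-1$ extension edges, so the total sign is $(-1)^{n-k(\bC)}$. This is the paper's construction, except that the paper records the parity of the number of finished clows in the vertex; your sign-on-edges device is a valid and slightly cleaner alternative.

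The gap is in the one step that goes beyond Mahajan--Vinay. You never prove that the scalar $\bq^{\INV(\la^\bC)}\bq^{-\INV(\mu^\bC)}$ is a product of factors attached to individual edges. You call the cancellation of cross-clow terms ``plausible'' and leave the per-clow factor as ``$q^{\pm1}$-type''. Your fallback of enlarging the state comes with no polynomial bound. Its premise is also wrong: earlier clows can contain values larger than the current head. In $(1\,5)(3\,4)$ the earlier clow contains $5>3$, so cross-clow inversions do occur in $\la^\bC$ and in $\mu^\bC$ separately. As written, you have not produced an $O(n^3)$ ABP with the correct $\bq$-scalars.

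The missing ingredient is the paper's Factorization Lemma together with its single-clow computation, and both are short.

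\emph{Cross-clow pairs.} Write $\la^\bC=\la^{C_1}\cdots\la^{C_k}$ and $\mu^\bC=\mu^{C_1}\cdots\mu^{C_k}$, where $\mu^{C}$ is $\la^{C}$ with the head moved from the front to the back. Each block has the same multiset of letters in $\la^\bC$ and in $\mu^\bC$. Hence the inversion pairs with entries in two different blocks form the same multiset in both words, and they cancel in the ratio.

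\emph{Within a clow.} Take $C=(c_1,\ldots,c_i)$. Moving the strict minimum $c_1$ to the end preserves the relative order of $c_2,\ldots,c_i$. It creates exactly the inversions $(c_j,c_1)$ for $2\le j\le i$, none of which occur in $\la^{C}$. Therefore
\[
\weight_\bq(\bC)\;=\;\weight(\bC)\prod_{(c_1,\ldots,c_i)}\;\prod_{j=2}^{i}q_{c_1c_j}^{-1},
\]
where the outer product runs over the clows of $\bC$.

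\emph{Edge labels.} Label each extension edge $[h,u,\ell]\to[h,v,\ell+1]$ by $-\,q_{hv}^{-1}a_{uv}$, and each closing edge by $a_{uh}$; this is legitimate since all $q_{ij}\ne 0$. This is exactly the paper's ``extend'' weight $a_{cc'}q^{-1}_{c_0c'}$. With it your construction goes through with no enlarged state, and the specializations $q_{ij}=\pm1$ follow as you describe.
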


\smallskip

The proof follows Mahajan and Vinay \cite{MV97}, with delicate modifications
to account for the constants \ts $q_{ij}$ \ts computed along the way.  The proof
given in Section~\ref{s:DP}, crucially relies on the Factorization
Lemma~\ref{l:factorization} for the $\bq$-weights of clow sequences.
While the proof of the lemma is very short, this technical result allows
us to use the dynamic programming approach as in the original \ts $\bq=1$ \ts case.

In summary, by Theorem~\ref{t:det-eq}, we obtain an ABP of polynomial size, which
computes the $\bq$-Cayley determinant of $\bq$-RQ matrices.
Consequently, after specializations \ts $q_{ij} = 1$ \ts and \ts
$q_{ij} = -1$, it computes the Cayley determinant and the Cayley permanent,
of right-quantum and antisymmetric right-quantum matrices, respectively.
This completes the proof of Theorem~\ref{t:main}.  \qed

\smallskip

\subsection{Structure of the paper}\label{ss:outline-structure}
We begin with Factorization Lemma in Section~\ref{s:factorization}.
Then, in Section~\ref{s:DP}, we give a $\bq$-version of the Mahajan--Vinay
argument in the proof of Lemma~\ref{lem:DP}.   This is
the only argument which applies to the whole free associative
algebra \ts $\cF_n\ts.$  The reader familiar with the \cite{MV97}
and interested only in the right-quantum ($\bq=1$) case, can safely skip this section.

To get the reader familiar with the setting of words in quantum algebras,
we have Sections~\ref{s:CF} and~\ref{s:RQ}, where we proof Theorem~\ref{t:det-eq}
for the $\bq$-CF and RQ matrices, respectively.   The proof of the $\bq$-RQ case
combines the details of both previous sections, and is given in Section~\ref{s:bq-RQ}.
We conclude with final remarks and open problems in Section~\ref{s:finrem}.

\medskip


\section{Factorization lemma} \label{s:factorization}
In this section we proof that the $\bq$-weight of any clow sequence factors into $\bq$-weights of its clows. This factorization lemma justifies the adjustment of the Mahajan and Vinay~\cite{MV97} dynamic programming by the constants \ts $q_{ij}\ts.$ We start with computation of the $\bq$-weight of a single clow.

\begin{lemma}\label{l:q-weight of clow}
For any clow \ts $C=(c_1, c_2, \ldots, c_\ell)$, we have
$$
\weight_\bq(C) \, = \, a_{c_1 c_2}\ts a_{c_2 c_3} \. \cdots \. a_{c_\ell c_1}\;
 \prod_{i=2}^{\ell} q_{\ts c_1\ts c_i}^{-1} \..
$$
\end{lemma}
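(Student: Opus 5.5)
For a single clow $C=(c_1,\ldots,c_\ell)$, regarded as a clow sequence with one clow, the words are $\la^C = c_1 c_2\cdots c_\ell$ and $\mu^C = c_2 c_3 \cdots c_\ell c_1$. By \eqref{eq:bq-weight}, $\weight_\bq(C)=\bq^{\INV(\la^C)}\,\bq^{-\INV(\mu^C)}\,\weight(C)$. So the plan is to compare the two inversion multisets directly. One caveat: since a clow may repeat letters, $\INV$ should be read as a multiset, i.e.\ $\bq^{\INV(\nu)}$ is the product over index pairs $i<j$ with $\nu_i>\nu_j$ of $q_{\nu_j\nu_i}$. This is the reading consistent with $\inv(\nu)=|\INV(\nu)|$ and with the factor $q^{k-n}$ in \eqref{eq:q-weight}.

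The key observation is that $\mu^C$ is obtained from $\la^C$ by moving the first letter $c_1$ to the end, and the rest of the word $c_2\cdots c_\ell$ is unchanged. Hence the pairs of positions not involving $c_1$ contribute identical inversions to both words, and their $\bq$-factors cancel in $\bq^{\INV(\la^C)}\bq^{-\INV(\mu^C)}$. It remains to account for the pairs involving $c_1$.

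In $\la^C$, the letter $c_1$ stands first and is strictly smaller than every other $c_i$ by the definition of a clow. So it forms no inversion with any later letter, and the contribution is $1$. In $\mu^C$, the letter $c_1$ stands last and is strictly smaller than each preceding $c_i$, $i=2,\ldots,\ell$. So each such pair is an inversion $(c_i,c_1)$, contributing the factor $q_{c_1 c_i}$ by the convention $\bq^J=\prod_{(i,j)\in J} q_{ji}$. Multiplying gives $\bq^{\INV(\la^C)}\bq^{-\INV(\mu^C)}=\prod_{i=2}^{\ell} q_{c_1c_i}^{-1}$, with repeated values $c_i$ counted with multiplicity. This yields the claimed formula, with $\weight(C)=a_{c_1c_2}\cdots a_{c_\ell c_1}$ by \eqref{eq:weight-C}.

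The only real obstacle is the bookkeeping convention just noted: the inversion product must be taken over position pairs, i.e.\ with multiplicity, rather than over a set. Otherwise repeated letters in a clow would break the cancellation in the first step and the count in the second. Once that reading is fixed, the argument is the direct comparison above, and it uses only the strict minimality of the head $c_1$.
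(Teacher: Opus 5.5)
Your proof is correct and matches the paper's argument: $\mu^C$ is the cyclic shift of $\la^C$, pairs not involving $c_1$ contribute equally to both inversion products, and the head contributes no inversion in $\la^C$ and exactly one inversion $(c_i,c_1)$ for each $i\ge 2$ in $\mu^C$. Your point that $\INV$ must be counted with multiplicity, over position pairs, is a useful clarification the paper leaves implicit, and it agrees with how the paper computes in Example~\ref{ex:factorization}.
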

\begin{prf}
For a single cycle \ts $C = (c_1, c_2, \ldots, c_\ell)$ \ts with head~$c_1$, we have: $\lambda^{C} = (c_1, \ldots, c_\ell)$ \ts and \ts
$\mu^{C} = (c_2, \ldots, c_\ell, c_1)$, so $\mu^{C}$ is a cyclic shift of
$\lambda^{C}$ by one position. Since $c_1<c_i$ any pair $(c_i,c_1) \in \INV(\mu)$ and any pair $(c_1,c_i) \notin \INV(\lambda)$ for all $2 \leq i \leq \ell$. For all other pairs $(c_i,c_j)$ with $2 \leq i<j \leq \ell$ we have that $(c_i,c_j) \in \INV(\lambda)$ if and only if $(c_i,c_j) \in \INV(\mu)$. Then, by  the definition \eqref{eq:bq-weight}, we have
$$
\weight_\bq(C) \, = \, \bq^{\INV(\la^{C})} \. \bq^{-\INV(\mu^{C})} \, \weight{\nts}(C) \, = \, a_{c_1 c_2}\ts a_{c_2 c_3} \. \cdots \. a_{c_\ell c_1}\;
 \prod_{i=2}^{\ell} q_{\ts c_1\ts c_i}^{-1} \.,
$$
as desired.
\end{prf}

\smallskip
\begin{lemma}[{\rm Factorization Lemma}{}]\label{l:factorization}
For a clow sequence $\bCr=(C_1,  \ldots, C_k) \ts \in \cC_n$, we have:
$$
\weight_\bq(\bCr) \, = \, \weight_\bq(C_1) \. \cdots \. \weight_\bq(C_k) \..
$$
\end{lemma}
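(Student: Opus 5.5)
The plan is to compare the exponent bookkeeping on the two sides directly. By definition \eqref{eq:weight-product}, $\weight(\bC)=\weight(C_1)\cdots\weight(C_k)$ as a product in $\cF_n$. The scalars $\bq^{\pm J}$ are central, so they can be pulled out and the factors reordered freely. It therefore suffices to prove the scalar identity
$$
\bq^{\INV(\la^\bCr)}\.\bq^{-\INV(\mu^\bCr)} \, = \, \prod_{a=1}^k \bq^{\INV(\la^{C_a})}\.\bq^{-\INV(\mu^{C_a})}.
$$
Here $\la^\bC=\la^{C_1}\cdots\la^{C_k}$ and $\mu^\bC=\mu^{C_1}\cdots\mu^{C_k}$ are concatenations of blocks, one block per clow.

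I will read $\bq^{\INV(\nu)}$ as the product over \emph{position} pairs $s<t$ with $\nu_s>\nu_t$ of $q_{\nu_t\nu_s}$. This is the reading under which Lemma~\ref{l:q-weight of clow} is stated: for clows with repeated letters its product $\prod_{i\ge2} q_{c_1c_i}^{-1}$ counts repeated $c_i$ with multiplicity. With this reading, the inversions of a concatenation $\nu=\nu^{(1)}\cdots\nu^{(k)}$ split into two kinds. The first kind are pairs of positions inside a single block $\nu^{(a)}$, which give exactly $\bq^{\INV(\nu^{(a)})}$. The second kind are pairs of positions lying in different blocks $a<b$, which give
$$
X_{ab}(\nu) \, := \, \prod_{x\in\nu^{(a)},\, y\in\nu^{(b)},\, x>y} q_{yx},
$$
where $x$ and $y$ run over letters with multiplicity. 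Hence $\bq^{\INV(\nu)}=\prod_a \bq^{\INV(\nu^{(a)})}\prod_{a<b}X_{ab}(\nu)$.

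The key observation is that $X_{ab}(\nu)$ depends only on the \emph{multisets} of letters in blocks $a$ and $b$, not on their order within the blocks. Now $\mu^{C_a}=(c_2,\ldots,c_\ell,c_1)$ is a cyclic shift of $\la^{C_a}=(c_1,\ldots,c_\ell)$, so each block of $\mu^\bC$ has the same multiset as the corresponding block of $\la^\bC$. Therefore $X_{ab}(\la^\bC)=X_{ab}(\mu^\bC)$ for all $a<b$. All cross-block factors cancel in $\bq^{\INV(\la^\bCr)}\bq^{-\INV(\mu^\bCr)}$, and what remains is the product of the within-block factors, which is exactly $\prod_a \weight_\bq(C_a)/\weight(C_a)$. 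Combining this with the factorization of $\weight(\bC)$ gives the lemma. As a byproduct, Lemma~\ref{l:q-weight of clow} yields the explicit formula $\weight_\bq(\bC)=\weight(\bC)\prod_a\prod_{i\ge 2} q_{c^a_1c^a_i}^{-1}$, which is what the dynamic programming will use.

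The only real obstacle I expect is the interpretation of $\INV$ for words with repeated letters. If $\INV$ were a genuine \emph{set} of value pairs, a cross-block pair could coincide with a within-block pair, and the clean splitting above would fail. I will state explicitly that the multiplicity (position-pair) convention is used, consistently with Lemma~\ref{l:q-weight of clow}. For cycle decompositions, where all letters are distinct, the two readings agree, so nothing in the Moore and Cayley determinants changes.
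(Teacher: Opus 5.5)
Your proposal is correct and is essentially the paper's proof. You split the inversions of the concatenated words $\la^{\bC}$ and $\mu^{\bC}$ into within-clow and cross-clow pairs, and the cross-clow contributions cancel because each block of $\mu^{\bC}$ is a cyclic shift of the corresponding block of $\la^{\bC}$, so it has the same multiset of letters. Your explicit convention of counting inversions by position pairs, with multiplicity, is a worthwhile addition: the paper's proof and Example~\ref{ex:factorization} (e.g.\ the factor $q_{34}^{2}$) rely on it tacitly even though $\INV$ is literally defined as a set, and under the set reading the lemma can fail.
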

\begin{prf}
Since the clows occupy contiguous blocks of positions, we have
$$
\weight(\bC) \, = \, \weight(C_1)\. \cdots \. \weight(C_k) \..
$$
For the product, write $\lambda = \lambda^{C_1}\cdots\lambda^{C_k}$ and
$\mu = \mu^{C_1}\cdots\mu^{C_k}$.
An inversion straddling two distinct clows' blocks pairs the same two letters
in~$\lambda$ as in~$\mu$, since each clow word carries the same multiset of letters;
its contributions to the $\INV(\lambda)$ and $\INV(\mu)$ are therefore equal
and cancel. Thus only within-clow inversions survive, which asserts the claim of the lemma.
\end{prf}

\begin{ex} \label{ex:factorization}
\normalfont
Consider the clow sequence \ts $\bC = (C_1, C_2)$, where \ts
$C_1 = (1\ts 4\ts 2\ts 4)$ \ts and \ts $C_2 = (3\ts 5\ts 4)$.  Here
$$
\weight(\bC) \, = \, a_{14}\ts a_{42}\ts a_{24}\ts a_{41}
 \,\cdot\, a_{35}\ts a_{54}\ts a_{43}\ts,
$$
so that \ts $\la^\bC = 1\ts 4\ts 2\ts 4\ts 3\ts 5\ts 4$ \ts and \ts
$\mu^\bC = 4\ts 2\ts 4\ts 1\ts 5\ts 4\ts 3$.
By Lemma~\ref{l:q-weight of clow} applied to each clow, we have:
$$
\weight_\bq(C_1) \, = \, a_{14}\ts a_{42}\ts a_{24}\ts a_{41}\;
 q_{\ts 1 4}^{-1}\ts q_{\ts 1 2}^{-1}\ts q_{\ts 1 4}^{-1}
\qquad \text{and} \qquad
\weight_\bq(C_2) \, = \, a_{35}\ts a_{54}\ts a_{43}\;
 q_{\ts 3 5}^{-1}\ts q_{\ts 3 4}^{-1}\ts.
$$
By the Factorization Lemma~\ref{l:factorization}, we conclude:
$$
\weight_\bq(\bC) \, = \, q_{\ts 1 2}^{-1}\, q_{\ts 1 4}^{-2}\,
 q_{\ts 3 4}^{-1}\, q_{\ts 3 5}^{-1}\; \weight(\bC)\ts.
$$
Alternatively, computing directly from the definition \eqref{eq:bq-weight}, we have \ts
$\inv(\la^\bC) = 4$ \ts and \ts $\inv(\mu^\bC) = 9$, with
$$
\bq^{\INV(\la^\bC)} \, = \, q_{\ts 2 4}\, q_{\ts 3 4}^{2}\, q_{\ts 4 5}
\qquad \text{and} \qquad
\bq^{-\INV(\mu^\bC)} \, = \, q_{\ts 1 2}^{-1}\, q_{\ts 1 4}^{-2}\,
 q_{\ts 2 4}^{-1}\, q_{\ts 3 4}^{-3}\, q_{\ts 3 5}^{-1}\, q_{\ts 4 5}^{-1}\ts,
$$
giving the same result.
\end{ex}

\medskip

\section{Proof of Dynamic Programming Lemma~\ref{lem:DP}}\label{s:DP}
The proof follows the approach by Mahajan and Vinay~\cite{MV97}  and the exposition
of Rote~\cite{Rote01}, with adjustments for the constants \ts $q_{ij}\ts.$

\smallskip

\begin{prf}[Proof of Lemma~\ref{lem:DP}]
We build up each clow sequence one arc at a time.
Formally, a \defn{partial clow sequence} \ts to be a prefix of the
weight \eqref{eq:weight-product} with heads satisfying \eqref{eq:clow-seq}.
Here we have several (finished) clows trailed by one \defn{open} (incomplete) clow.
For a partial clow sequence \. $\bC = C_1 \cdots C_s\ts
(c_0\ts c_1\ts c_2 \cdots c_t\ts$, define its
\defn{partial ${\bq}$-weight} \ts as
$$
\pweight_{\bq}(\bC) \, := \,
 \weight_\bq(C_1) \. \cdots \. \weight_\bq(C_s) \;\cdot\;
 a_{c_0 c_1}\ts a_{c_1 c_2} \cdots a_{c_{t-1}\ts c_t} \;\cdot\;
 \prod_{j=1}^{t} \. q^{-1}_{\ts c_0\ts c_j}\ts.
$$
Here we take the product of the $\bq$-weights of its completed clows, the weight
$($not the $\bq$-weight$)$ of the open clow, and the scalar correction
of the open clow.
The remainder of the computation depends on is the head and the present
endpoint of the open clow, plus the sign $(-1)^k$ recording the parity
of the count~$k$ of finished clows.

The desired ABP is given by the recursion, where the level of vertices
will be denoted by~$\ell$.   The recursion computes a polynomial
\ts $F_v(a_{ij})$ \ts for every vertex \ts $v=[\ell, c, c_0, s]$,
where \. $1 \le \ell \le n$, \. $1\le c_0< c\le n$, and $s\in \{\pm 1\}$.
The rule is:
\begin{quote}
For $v=[\ell, c, c_0, s]$, let \ts $F_v$ \ts be the sum of partial ${\bq}$-weights
over all partial clow sequences that have reached length~$\ell$,
currently stand at vertex~$c$ inside a clow with head~$c_0$,
and have completed a number of clows of parity \ts $s = \pm 1$.
\end{quote}
Add an auxiliary source vertex $S$ and set \ts $F_S:=1$, and take edges to
all vertices on level~$1$.   For $v=[1,r,r,1]$,
let \ts $F_v := a_{rr}\ts$.  From a given $[\ell, c, c_0, s]$, a partial
clow sequence advances in exactly two ways, which we record as two families of outgoing
edges:
\begin{itemize}
\item \emph{extend} \ts the current clow: edge to \ts $[\ell+1, c', c_0, s]$ \ts of weight \.
$a_{cc'} \cdot q_{\ts c_0\ts c'}^{-1}\ts,$ for all \ts $c' > c_0\ts,$
\item \emph{close} \ts the current clow and open a new one: edges to
\ts $[\ell+1, c_0', c_0', -s]$ \ts of weight~$\ts a_{c c_0}\ts$, \\ for all \ts $c_0' > c_0\ts.$
\end{itemize}
Add a sink vertex \ts $T=[n+1, n+1, n+1, \pm 1]$, with edges from all
vertices at level~$n$.
By the construction and definition \eqref{eq:bq-Vdet},
the sum of weights over all $S\to T$ path, taken with the sign given by~$s$,
equals the $\bq$-Valiant determinant.  Finally, note that the total number
of vertices is \ts $O(n^3)$, which completes the proof of the first part.

For the $\bq$-Valiant determinant and the (usual) Valiant determinant,
the lemma follows from definition and by setting all \ts $q_{ij}=1\ts,$
respectively.
Finally, for the Valiant permanent, the proof follows by the definition,
from the relation \eqref{eq:det-per},  and by setting all \ts
$q_{ij}=-1\ts$.
\end{prf}

\smallskip

{\small
\begin{rem}
In the proof above, we are using the fact that for a complete clow sequence
\ts $\bC = (C_1, \ldots, C_k) \in \cC_n\ts$, the partial ${\bq}$-weight coincides
with the $\bq$-weight:
$$
\pweight_{\bq}(\bC) \, = \, \weight_\bq(\bC)\ts.
$$
This follows immediately from Lemma~\ref{l:q-weight of clow} and  Factorization
Lemma~\ref{l:factorization}.  However, for general partial clow sequences,
the two are not necessarily equal, and can differ by a scalar (product of constants~$q_{ij}$).
For example, for a partial clow \ts $(1\ts 3\ts 2$ \ts we have:
$$
\weight_\bq\bigl(a_{13}\ts a_{32}\bigr) \, = \, a_{13}\ts a_{32}\; q_{\ts 2 3}^{-1}
\qquad \text{while} \qquad
\pweight_{\bq}\bigl(a_{13}\ts a_{32}\bigr) \, = \, a_{13}\ts a_{32}\; q_{\ts 1 3}^{-1}\ts q_{\ts 1 2}^{-1}\ts,
$$
while for a complete clow \ts $(1\ts 3\ts 2)$ \ts we have:
$$
\pweight_{\bq}\bigl(a_{13}\ts a_{32}\ts a_{21}\bigr) \, = \, a_{13}\ts a_{32}\ts a_{21}\; q_{\ts 1 3}^{-1}\ts q_{\ts 1 2}^{-1}
\, = \, \weight_\bq\bigl(a_{13}\ts a_{32}\ts a_{21}\bigr)\ts.
$$
Note that the partial ${\bq}$-weight can be computed recursively, as at
each step the scalar is multiplied by \ts $q^{-1}_{\ts c_0\ts c'}\ts$,
while the usual $\bq$-weight might gain a few new inversions not involving
the head~$c_0$, and determining them requires a further search.
\end{rem}
}
\medskip


\section{Cartier--Foata matrices} \label{s:CF}

\subsection{Cayley--Moore determinantal equality} \label{ss:CF-CM-det}
In some sense, Cayley determinant \eqref{eq:bq-Cdet} and Moore determinant \eqref{eq:bq-Mdet}
are the most economical formulas for the \ts $\bq$-determinant.  In the $\bq$-CF algebra,
they are equal up to a constant.

We start with a general observation which will be used repeatedly.
For a word \. $a_{\la,\mu} = a_{\la_1\mu_1}\ts a_{\la_2\mu_2} \cdots$ \.
in \ts $\cF_n\ts$, define its
\defn{$\bq$-weight} \ts by analogy with \eqref{eq:bq-weight}:
\begin{equation}\label{eq:bq-weight-word}
\weight_\bq\big(a_{\la,\mu}\big) \, := \,
 \bq^{\INV(\la)} \. \bq^{-\INV(\mu)} \, a_{\la,\mu}\ts,
\end{equation}
so that \ts $\weight_\bq(\bC) = \weight_\bq\big(a_{\la^\bC\nts,\ts\mu^\bC}\big)$ \ts
for all clow sequences \ts $\bC\in\cC_n\ts$.

\smallskip

\begin{lemma}\label{l:interchange}
Let \ts $a_{\la',\mu'}$ \ts be obtained from \ts $a_{\la,\mu}$ \ts by
interchanging two adjacent factors \ts $a_{\ell j}\. a_{ki}$ \ts with \ts
$i \ne j$.  Then:
$$
\weight_\bq\big(a_{\la',\mu'}\big) \, = \, \weight_\bq\big(a_{\la,\mu}\big)
\quad \text{in the \ts $\bq$-CF algebra} \ \ \cA_n^\bq\ts.
$$
\end{lemma}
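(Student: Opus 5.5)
The proof will be a direct local computation. First I will check that the swap only affects the scalar prefactor through the single pair of positions being interchanged. Then I will do a case analysis matched against the three relations in \eqref{eq:bq-CF}.

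Suppose the interchanged factors sit in positions $p,p+1$. Then $\la$ has letters $\ell,k$ there and $\la'$ has $k,\ell$; similarly $\mu$ has $j,i$ and $\mu'$ has $i,j$. All other letters are unchanged.

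\emph{Step 1 (locality of inversions).} Any pair of positions not equal to $\{p,p+1\}$ either avoids both positions, or pairs one of them with a position $r$ outside. In the second case, as $r$ is paired with $p$ and with $p+1$, the swap just exchanges which of the two letters sits where. So the multiset of such inversion pairs is preserved, and $\INV(\la')$, $\INV(\la)$ differ only in the contribution of the pair $\{k,\ell\}$; likewise for $\mu$ with $\{i,j\}$. Hence the ratio of scalar prefactors $\bq^{\INV(\la')}\bq^{-\INV(\mu')}$ and $\bq^{\INV(\la)}\bq^{-\INV(\mu)}$ is determined by $k,\ell,i,j$ alone. Since the ideal $I^\bq_{\cf}$ is two-sided and scalars are central, it suffices to verify the identity on the two-letter word $a_{\ell j}a_{ki}$, with the outer factors of the word multiplied on both sides.

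\emph{Step 2 (cases with $i<j$).} Here $\mu$ contains the inversion $(j,i)$ at positions $p,p+1$ and $\mu'$ does not, so the scalar changes by a factor $q_{ij}$ coming from $\mu$.
\begin{enumerate}
\item If $k<\ell$, then $\la$ loses the inversion on $\{k,\ell\}$. The identity reduces to $q_{k\ell}\,q_{ij}^{-1}\,a_{\ell j}a_{ki}=a_{ki}a_{\ell j}$, which is the first relation of \eqref{eq:bq-CF}.
\item If $k>\ell$, then $\la'$ gains an inversion on $\{\ell,k\}$. The identity reduces to $q_{ij}^{-1}\,a_{\ell j}a_{ki}=q_{\ell k}\,a_{ki}a_{\ell j}$, which is the second relation.
\item If $k=\ell$, then $\la$ has no change. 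The identity reduces to $q_{ij}^{-1}\,a_{kj}a_{ki}=a_{ki}a_{kj}$, which is the third relation.
\end{enumerate}

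\emph{Step 3 (case $i>j$).} This follows from Step~2 by symmetry. Swapping $a_{\ell j}a_{ki}$ to $a_{ki}a_{\ell j}$ is the inverse of swapping $a_{ki}a_{\ell j}$ to $a_{\ell j}a_{ki}$, and the latter is an instance of Step~2 with the roles of $(k,i)$ and $(\ell,j)$ exchanged. Equality of $\bq$-weights is symmetric, so the case follows.

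\emph{Main obstacle.} The only delicate point is bookkeeping: matching the convention $\bq^{J}=\prod q_{ji}$ to the inversion pairs recorded in $\INV$. I will fix once that an inversion between letters $x<y$ contributes $q_{xy}$, then check the three sign and exponent placements carefully against \eqref{eq:bq-CF}. Each placement yields exactly the relation, with no leftover factor. The condition $i\ne j$ is used precisely because no relation is available when $i=j$.
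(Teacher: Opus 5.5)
Your proof is correct and follows essentially the same route as the paper. The shared steps are: the change in inversions is confined to the swapped slot, the $\mu$-inversion contributes $q_{ij}$, and the three cases $k<\ell$, $k>\ell$, $k=\ell$ are matched against the three relations of \eqref{eq:bq-CF}. Your locality argument and your reduction of $i>j$ to $i<j$ via the inverse swap just spell out what the paper compresses into ``the interchange changes the inversion sets only in the slot these factors occupy'' and ``without loss of generality, let $i<j$.''
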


\begin{prf}
Without loss of generality, let \ts $i<j$.  The interchange changes the
inversion sets of~$\la$ and~$\mu$ only in the slot these factors occupy: it
deletes the inversion of~$\mu$ there, so the $\bq$-weight loses the factor
$q_{ij}^{-1}$; in~$\la$ it deletes the inversion contributing $q_{k\ell}$ when
$k<\ell$, adds the inversion contributing $q_{\ell k}$ when $k>\ell$, and
changes nothing when $k=\ell$.  Hence the interchange multiplies the
$\bq$-weight by
$$
 c \, = \, q_{ij}\ts q_{k\ell}^{-1}\ \ (k<\ell), \qquad
 c \, = \, q_{ij}\ts q_{\ell k}\ \ (k>\ell), \qquad
 c \, = \, q_{ij}\ \ (k=\ell).
$$
On the other hand, in each of these three cases relations \eqref{eq:bq-CF}
read \ts $a_{\ell j}\ts a_{ki} \ts = \ts c\ts a_{ki}\ts a_{\ell j}\ts$, so the
interchange multiplies the product of entries by \ts $c^{-1}$.  The two
factors \ts $c$ \ts and \ts $c^{-1}$ \ts cancel, and the $\bq$-weight is
unchanged.
\end{prf}

\smallskip

Now, comparing \eqref{eq:bq-Cdet} and \eqref{eq:bq-Mdet},
we need to show that

\begin{lemma}\label{l:clow-cancel}
For all \ts $\si\in S_n$ \ts and the corresponding cycle decomposition \ts
$\bCr=\bCr(\si)$, we have
$$
\weight_\bq(\bCr) \, = \, \bq^{-\INV(\sigma)} \,
 a_{1 \sigma_1} \cdots a_{n \sigma_n}\..
$$
\end{lemma}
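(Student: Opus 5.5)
The plan is to read both sides of the identity as $\bq$-weights of words, in the sense of \eqref{eq:bq-weight-word}, and to pass from one word to the other by adjacent interchanges. Each interchange preserves the $\bq$-weight in the $\bq$-CF algebra $\cA_n^\bq$ by Lemma~\ref{l:interchange}. The identity is therefore to be proved in $\cA_n^\bq$, which is the setting of this section.

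First I identify the right-hand side. The word $a_{1\si_1}\cdots a_{n\si_n}$ is $a_{\la,\mu}$ with $\la = 12\cdots n$ and $\mu = \si_1\si_2\cdots\si_n$. The identity word has no inversions, so $\bq^{\INV(\la)}=1$. Moreover $\INV(\mu)=\INV(\si)$ as sets of pairs. Hence
$$
\weight_\bq\big(a_{1\si_1}\cdots a_{n\si_n}\big) \, = \, \bq^{-\INV(\si)}\, a_{1\si_1}\cdots a_{n\si_n}\ts,
$$
and the right-hand side of the lemma is exactly this $\bq$-weight. On the other side, $\weight_\bq(\bCr)=\weight_\bq\big(a_{\la^\bCr\nts,\ts\mu^\bCr}\big)$ by the remark after \eqref{eq:bq-weight-word}.

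Next I observe that the word $\weight(\bCr)$ is a rearrangement of the factors $a_{1\si_1},\ldots,a_{n\si_n}$. Each arc $c\to c'$ of a cycle of $\si$ is the factor $a_{c\ts\si_c}$, and every $c\in[n]$ occurs exactly once as the source of an arc. So there is a sequence of adjacent transpositions carrying the natural-order word to the cycle-order word; concretely, I would bubble-sort the positions.

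For every such adjacent swap, the two factors $a_{\ell j}\ts a_{ki}$ being interchanged are distinct factors of the same permutation word. Their second indices are then distinct values $\si_\ell\ne\si_k$ of the permutation, so $i\ne j$. This is exactly the hypothesis of Lemma~\ref{l:interchange}, so each swap leaves the $\bq$-weight of the word unchanged in $\cA_n^\bq$. Chaining the swaps gives
$$
\weight_\bq(\bCr) \, = \, \weight_\bq\big(a_{1\si_1}\cdots a_{n\si_n}\big)\ts,
$$
and combining this with the first step yields the claim.

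The only point that needs care is verifying that the hypothesis $i\ne j$ holds at every intermediate stage, and not just for the initial word. This follows because the multiset of factors never changes along the sequence of swaps, and distinct factors always have distinct column indices. With that checked, I expect no real obstacle; the substantive bookkeeping was already done in Lemma~\ref{l:interchange}.
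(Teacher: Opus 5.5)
Your proposal is correct and follows the paper's proof essentially verbatim. Like the paper, you identify the right-hand side as $\weight_\bq$ of the column word (with $\la=12\cdots n$ and $\mu=\si$), note that $\weight(\bCr)$ is a rearrangement of the same $n$ factors with distinct column indices, and chain applications of Lemma~\ref{l:interchange} along adjacent swaps in $\cA_n^\bq$. Your explicit check that $i\ne j$ holds at every intermediate stage is the same point the paper makes when it notes that the factors have distinct row and column indices.
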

\begin{prf}
Both \ts $\weight(\bCr)$ \ts and the column word \ts
$a_{1 \sigma_1} \cdots a_{n \sigma_n}$ \ts are products of the same $n$
factors, with distinct row indices and distinct column indices, so one is
obtained from the other by interchanging adjacent factors \ts
$a_{\ell j}\. a_{ki}$ \ts with \ts $i \ne j$.  By Lemma~\ref{l:interchange},
we have \ts
$\weight_\bq(\bCr) = \weight_\bq\big(a_{1 \sigma_1} \cdots a_{n \sigma_n}\big)$
\ts in $\cA_n^\bq\ts$.  For the column word we have \ts $\la = (1, \ldots, n)$ \ts
and \ts $\mu = \sigma$, so \ts $\bq^{\INV(\la)} = 1$ \ts and \ts
$\bq^{-\INV(\mu)} = \bq^{-\INV(\sigma)}$, and the result follows
from~\eqref{eq:bq-weight-word}.
\end{prf}

\smallskip

\begin{cor} \label{c:C-M-cf}
Let \. $A=(a_{ij}) \in \cA_n^{\bq}$ \. be a $\bq$-CF matrix.  Then we have:
$$
\Cdet_\bq(A) \, = \, \Mdet_\bq(A).
$$
\end{cor}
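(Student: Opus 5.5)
The plan is to compare the two sums term by term, using the bijection $\si \mapsto \bC(\si)$ between $S_n$ and $\cS_n$. By \eqref{eq:bq-Cdet} and \eqref{eq:bq-Mdet}, both determinants are indexed by permutations $\si\in S_n$. I will first match the signs, and then match the weighted products using Lemma~\ref{l:clow-cancel}.

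For the signs, the Cayley term of $\si$ carries $(-1)^{\inv(\si)}$. The Moore term of $\bC(\si)$ carries $\sign(\bC)=\sign(\si)=(-1)^{n-k(\bC)}$. These agree, since $(-1)^{\inv(\si)}=\sign(\si)$, as already recorded before \eqref{eq:det-per}.

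For the weighted products, the Cayley term is $\bq^{-\INV(\si)}\, a_{1\si_1}\cdots a_{n\si_n}$. The Moore term is $\weight_\bq(\bC(\si))$. Lemma~\ref{l:clow-cancel} states that these are equal in $\cA_n^\bq$. Summing over $\si\in S_n$ then gives $\Cdet_\bq(A)=\Mdet_\bq(A)$.

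The only point to check is that the map $\si\mapsto\bC(\si)$ is a bijection onto $\cS_n$. This holds by the definition of $\cS_n$ together with the uniqueness of the canonical cycle form: each cycle starts at its head and cycles are ordered by increasing heads. There is no real obstacle here, since the substantive work, namely reordering the factors via Lemma~\ref{l:interchange} while tracking the $\bq$-scalars, is already contained in Lemma~\ref{l:clow-cancel}.
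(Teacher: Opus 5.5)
Your proposal is correct and is the paper's proof: the paper likewise compares \eqref{eq:bq-Cdet} and \eqref{eq:bq-Mdet} term by term via Lemma~\ref{l:clow-cancel}. You have just made explicit what the paper leaves implicit, namely the sign identity $(-1)^{\inv(\si)}=\sign(\bC(\si))$ and the fact that $\si\mapsto\bC(\si)$ is a bijection onto $\cS_n$.
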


\begin{prf}
The result follows by comparing the definitions \eqref{eq:bq-Cdet}
and \eqref{eq:bq-Mdet} term by term, via Lemma~\ref{l:clow-cancel}.
\end{prf}

\smallskip

\subsection{Moore--Valiant determinantal equality} \label{ss:CF-MV-det}%
Let us prove Lemma~\ref{lem:M-V-det} for $\bq$-CF matrices.

\smallskip

\begin{lemma} \label{t:clow-cf-qij}
Let \. $A=(a_{ij}) \in \cA_n^{\cf}$ \. be a $\bq$-CF matrix.  Then we have:
$$
\Mdet_\bq(A) \, = \, \Vdet_\bq(A).
$$
\end{lemma}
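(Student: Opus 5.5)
The plan is to follow Mahajan and Vinay \cite{MV97} and build a sign-reversing involution $\bC\mapsto\bC'$ on $\cC_n^{\ts\tancirc}$ for which $\weight_\bq(\bC')=\weight_\bq(\bC)$ holds in $\cA_n^\bq$. Since $\Vdet_\bq(A)-\Mdet_\bq(A)$ is exactly the sum of $\sign(\bC)\weight_\bq(\bC)$ over $\bC\in\cC_n^{\ts\tancirc}$, such an involution makes this difference vanish. The algebraic tool is Lemma~\ref{l:interchange}, used repeatedly. It says that in $\cA_n^\bq$ the $\bq$-weight \eqref{eq:bq-weight-word} of a word $a_{\la,\mu}$ is unchanged under any rearrangement of its factors that only swaps adjacent factors with \emph{distinct} column indices. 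Equivalently, the $\bq$-weight is invariant under every permutation of the factors that keeps, for each column index $j$, the relative order of the factors $a_{\ast j}$. The $\bq$-scalars need no separate bookkeeping, because they are built into $\weight_\bq$ of a word and Lemma~\ref{l:interchange} accounts for them.

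\emph{Step 1 (the involution).} Given $\bC=(C_1,\ldots,C_k)\in\cC_n^{\ts\tancirc}$, let $i$ be the largest index such that $C_{i+1},\ldots,C_k$ are pairwise disjoint simple cycles. Inside $C_i$, locate the first violation $v$. This is either a repeated vertex of $C_i$, or a vertex of $C_i$ lying in some later cycle $C_j$ with $j>i$.
\begin{itemize}
\item In the first case, cut the closed subwalk at $v$ out of $C_i$. It is a simple cycle whose vertices all exceed $\head(C_i)$. Rotate it to start at its minimum, and insert it as a new clow in its correct position among $C_{i+1},\ldots,C_k$ according to \eqref{eq:clow-seq}.
\item In the second case, do the reverse: splice $C_j$, rotated to start at $v$, into $C_i$ at $v$, and delete $C_j$.
\end{itemize}
As in \cite{MV97}, these two operations are mutually inverse. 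Each changes $k(\bC)$ by one, so it flips $\sign(\bC)=(-1)^{n-k}$. It also preserves the multiset of factors $a_{c\,c'}$, because rotating a cycle keeps its set of arcs.

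\emph{Step 2 (weight invariance).} The word of $\bC'$ is obtained from that of $\bC$ by moving a block of factors, namely the arcs of the extracted cycle, and possibly by cyclically rotating that block. By the reformulation of Lemma~\ref{l:interchange} above, it suffices to check one thing. For each column $j$ occurring in the moved block, the block contains at most one factor $a_{\ast j}$, and this holds automatically since the block is a simple cycle. In addition, no factor passed over by the block may have column $j$. The factors passed over are of two kinds.
\begin{itemize}
\item The later cycles $C_{i+1},\ldots$ that the block jumps across. These are disjoint from it by the choice of $i$ and $v$, so they cause no trouble.
\item The remainder of $C_i$ lying between the cut and the end of $C_i$.
\end{itemize}
Rotating the block within itself is also harmless, again because its column indices are distinct.

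\emph{Main obstacle.} The delicate point is the second kind of passed-over factor, the tail of $C_i$. With the naive \emph{first} repetition, the tail of $C_i$ may revisit vertices of the extracted cycle. Moving the block past such a revisit would reorder two factors sharing a column, and Lemma~\ref{l:interchange} does not apply to that swap. My plan is to redefine the violation so that the extracted block is disjoint from everything it must cross. Concretely, I would scan $C_i$ backwards from its closing arc into $\head(C_i)$ and take the first vertex, in this backward order, that either repeats within the part already scanned or lies in a later $C_j$. With this choice the extracted cycle is the final closed piece of $C_i$ before returning to the head. Its vertices do not appear in the portion of $C_i$ following it, so the block passes only over factors with disjoint columns. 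The work then lies in verifying that this modified rule still defines an involution: after extraction (resp.\ merging), the same vertex $v$ must be detected in $\bC'$ by the same rule, and the index $i$ must be unchanged. This is the analogue of the corresponding verification in \cite{MV97}, and I expect it to be the main technical step.
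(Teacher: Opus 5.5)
Your final rule (scan the defective clow from its right end toward its head and stop at the first vertex that either recurs to its right or lies in a later cycle) is exactly the paper's rule, cf.\ Remark~\ref{rem:MV-diff}, and your weight-invariance argument via Lemma~\ref{l:interchange} is the paper's argument, so the proposal is correct and follows the same route. The involution check you flag as the main technical step is routine, and the paper only asserts it: if the scan of $(c_1,\ldots,c_r)$ stops at $c_j=c_\ell$ with $\ell>j$, then in the shortened clow $(c_1,\ldots,c_{j-1},c_\ell,\ldots,c_r)$ the scan passes $c_r,\ldots,c_{\ell+1}$ and stops at $c_\ell$, which now lies in the extracted cycle, while after a merge it passes $c_r,\ldots,c_j$ and the inserted vertices other than the first, stopping at the inserted copy of $c_j$; note also that your index $i$ should be the \emph{smallest} (not the largest) index for which $C_{i+1},\ldots,C_k$ are pairwise disjoint cycles.
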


\begin{prf}
Comparing the definitions \eqref{eq:bq-Mdet} and \eqref{eq:bq-Vdet}, it
suffices to show that the signed $\bq$-weights of the clow sequences in \ts
$\cC_n^{\ts\tancirc}=\cC_n\sm \cS_n$ \ts cancel each other.  We pair off all such clow
sequences, as follows.

Let \ts $\bC = (C_1, \ldots, C_k) \in \cC_n^{\ts\tancirc}\ts$.  We successively
add the clows \ts $C_k, C_{k-1}, \ldots$ \ts until a repetition occurs:
assume that \ts $C_{p+1}, \ldots, C_k$ \ts is a set of disjoint cycles, but
\ts $C_p, C_{p+1}, \ldots, C_k$ \ts contains a repeated element, where \ts
$1 \le p \le k$.  Let \ts $C_p = (c_1, \ldots, c_i)$, and let \ts $c_j$ \ts
be the first element of this clow, moving from the right end to the left
towards the clow head, which is either \ts (A)~equal to an
element~$c_\ell$ in the tail of the clow \ts $(j < \ell \leq i)$, or
(B)~equal to an element of one of the cycles \ts $C_{p+1}, \ldots, C_k\ts$.

Precisely one of these two cases occurs: we call \ts $\bC$ \ts
\defn{merged} \ts in case~(A), and \defn{split} \ts in case~(B).
In case~(A), we remove the cycle \ts
$(c_j = c_\ell, c_{j+1}, \ldots, c_{\ell-1})$ \ts from the clow~$C_p$ and
turn it into a separate cycle, which we insert into~$\bC$ in the position
prescribed by \eqref{eq:clow-seq}, after cyclically shifting its head to the
front.  In case~(B), we insert the corresponding cycle into the clow~$C_p$
just before the element~$c_j\ts$.  These two operations are inverse to each
other, and hence they pair off all clow sequences in \ts
$\cC_n^{\ts\tancirc}$ \ts into \defn{orbits} \ts $\{\bC, \bC'\}$ \ts of a merged
and the corresponding split clow sequence, with \ts $k(\bC') = k(\bC)+1$;
see Example~\ref{ex:merged-split} below.

By \eqref{eq:weight-product}, we have \ts $\sign(\bC') = -\sign(\bC)$.  On
the other hand, one word is obtained from the other by
repeatedly interchanging two adjacent factors \ts $a_{\ell j}\. a_{ki}$ \ts
with \ts $i \ne j$.  By Lemma~\ref{l:interchange}, we conclude:
$$
\weight_\bq(\bC) \, = \, \weight_\bq(\bC')
\quad \text{in the algebra} \ \ \cA_n^\bq\..
$$
Therefore, the two terms of every orbit in \eqref{eq:bq-Vdet} cancel each
other, and only the terms corresponding to cycle decompositions \ts
$\bC\in\cS_n$ \ts survive, giving \eqref{eq:bq-Mdet}, as desired.
\end{prf}

\smallskip
{\small
\begin{rem}\label{rem:MV-diff}
\normalfont
Our pairing between merged and split clow sequences differs from the one of
Mahajan and Vinay~\cite{MV97}.  The key difference is that we read each clow from
its right end towards its head on the left, whereas in~\cite{MV97} each clow is
read from its head towards the right.  This change is crucial: in the
non-commutative classes of matrices we consider, entries in the same row do not
commute at all, and since the heads of the split clow sequence must increase
from left to right, the given involution assures that we do not encounter
factors from the same row as we perform swaps to reach its split sequence.
This was not the case in the original Mahajan--Vinay involution.
\end{rem}
}

\smallskip
\begin{ex} \label{ex:merged-split}
\normalfont
Let \ts $\bC = (C_1, C_2, C_3)$ \ts be the merged clow sequence, where \ts
$C_1 = (1\ts 7\ts 2\ts 5\ts 5\ts 3\ts 8\ts 5\ts 7)$, \ts $C_2 = (2)$ \ts
and \ts $C_3 = (4\ts 6\ts 9)$.  Extracting the cycle \ts $(3\ts 8\ts 5)$ \ts
from the clow~$C_1$ gives the corresponding split clow sequence
$$
 \bC' \, = \, (1\ts 7\ts 2\ts 5\ts 5\ts 7)\ts(2)\ts(3\ts 8\ts 5)\ts(4\ts 6\ts 9)\ts,
$$
so that \ts $\{\bC, \bC'\}$ \ts is an orbit.  In this case, we have:
$$
 \weight(\bC) \, = \, a_{17}\ts a_{72}\ts a_{25}\ts a_{55}\ts \underline{a_{53}}\ts \underline{a_{38}}\ts \underline{a_{85}}\ts a_{57}\ts a_{71}
 \,\cdot\, a_{22} \,\cdot\, a_{46}\ts a_{69}\ts a_{94}
$$
and
$$
 \weight(\bC') \, = \, a_{17}\ts a_{72}\ts a_{25}\ts a_{55}\ts a_{57}\ts a_{71}
 \,\cdot\, a_{22} \,\cdot\, \underline{a_{38}}\ts \underline{a_{85}}\ts \underline{a_{53}} \,\cdot\, a_{46}\ts a_{69}\ts a_{94}\ts,
$$
where the letters of the relocated cycle are underlined.  Note that \ts
$k(\bC) = 3$ \ts and \ts $k(\bC') = 4$, so that \ts
$\sign(\bC') = -\sign(\bC)$, in agreement with \eqref{eq:weight-product}.
By Lemma~\ref{l:q-weight of clow} and the Factorization
Lemma~\ref{l:factorization}, we also have:
$$
\weight_\bq(\bC) \, = \,
 q_{\ts 1 2}^{-1}\, q_{\ts 1 3}^{-1}\, q_{\ts 1 5}^{-3}\,
 q_{\ts 1 7}^{-2}\, q_{\ts 1 8}^{-1}\, q_{\ts 4 6}^{-1}\, q_{\ts 4 9}^{-1}
 \; \weight(\bC)
$$
and
$$
\weight_\bq(\bC') \, = \,
 q_{\ts 1 2}^{-1}\, q_{\ts 1 5}^{-2}\, q_{\ts 1 7}^{-2}\,
 q_{\ts 3 5}^{-1}\, q_{\ts 3 8}^{-1}\, q_{\ts 4 6}^{-1}\, q_{\ts 4 9}^{-1}
 \; \weight(\bC')\ts,
$$
which are equal in the algebra \ts $\cA_n^\bq\ts$, as in the proof of
Lemma~\ref{t:clow-cf-qij}.
\end{ex}


\medskip


\section{Right-quantum matrices} \label{s:RQ}

Unlike in the Cartier--Foata case \ts $\cA_n^\bq\ts$, showing that \ts
$\Cdet_\bq\ts$, \ts $\Mdet_\bq$ \ts and \ts $\Vdet_\bq$ \ts are equal in \ts
$\cB_n^\bq$ \ts is significantly harder.  We postpone the proof until the
next section. 
In this section, we start with the RQ case,
which captures the essence of the proof for the right-quantum matrices.

\smallskip

\subsection{Cayley--Moore determinantal equality} \label{ss:cyc-rq}
In the RQ case, we have the following special case of Lemma~\ref{lem:C-M-det}:

\smallskip

\begin{lemma} \label{t:cyc-rq}
Let \. $A=(a_{ij}) \in \cB_n$ \. be an RQ matrix.  Then we have:
$$
\Cdet(A) \, = \, \Mdet(A).
$$
\end{lemma}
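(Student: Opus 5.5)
The plan is to interpolate between the two row orders through a family of intermediate row orders, and to show that the signed sum is unchanged at each step. For a word $\la=\la_1\cdots\la_n$ that is a permutation of~$[n]$ (a \emph{row order}), set
$$
D(\la) \, := \, \sum_{\si\in S_n} \sign(\si)\, a_{\la_1\si_{\la_1}}\, a_{\la_2\si_{\la_2}}\cdots a_{\la_n\si_{\la_n}}\ts.
$$
Here every factor $a_{r\si_r}$ appears exactly once, read in the row order~$\la$. By~\eqref{eq:Cdet} we have $\Cdet(A)=D(12\cdots n)$. In contrast, $\Mdet(A)$ uses the row order $\la^{\bC(\si)}$, which depends on~$\si$. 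So the statement cannot simply be reduced to showing that $D(\la)$ is independent of~$\la$; that is only the first step.

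\emph{Step 1: local invariance.} Fix $\la$ and a position~$p$, and let $k=\la_p$, $\ell=\la_{p+1}$, with $k<\ell$ say. I would pair each $\si$ with $\si\ts(k\,\ell)$, which has the opposite sign. Since $\si_k\ne\si_\ell$, write $\{\si_k,\si_\ell\}=\{i,j\}$ with $i<j$. The two paired terms then differ only in the two adjacent factors, and together contribute $\pm(a_{ki}a_{\ell j}-a_{kj}a_{\ell i})$ in that slot. By the second relation of~\eqref{eq:rq} this equals $\pm(a_{\ell j}a_{ki}-a_{\ell i}a_{kj})$, which is exactly the paired contribution to the sum with rows $k,\ell$ swapped. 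Hence $D(\la)=D(\la s_p)$, and so $D(\la)=\Cdet(A)$ for every row order~$\la$. The first relation in~\eqref{eq:rq} is not needed here; it will be used only when a swap involves two factors from the same row, which cannot occur at this stage.

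\emph{Step 2: passing from a common row order to cycle order.} To reach $\Mdet$, I would run the Konvalinka--Pak style argument described in~$\S$\ref{ss:outline-lemmas}. Start from $\Cdet(A)=D(12\cdots n)$ and move each term toward its cycle order by adjacent swaps. Each swap uses the identity
$$
a_{ki}a_{\ell j} \, = \, a_{\ell j}a_{ki} \, + \, \big(a_{kj}a_{\ell i}-a_{\ell i}a_{kj}\big)\ts.
$$
The correction term $a_{kj}a_{\ell i}-a_{\ell i}a_{kj}$ is, up to sign, the same slot of the term indexed by $\si'=\si\ts(k\,\ell)$, in both row orders. So the corrections produced by $\si$ cancel against the corresponding pieces produced by~$\si'$, provided both terms are processed by the \emph{same} swap at the \emph{same} current row order.

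To make this consistent, I would fix a canonical deterministic sorting schedule. The natural choice is to build the cycle order of $\bC(\si)$ greedily: place row $1$, then row $\si_1$, then $\si_{\si_1}$, and so on, and start a new cycle at the smallest unused row. At step $t$, this schedule moves the required row leftward into position~$t$. The key property to check is that whenever a swap exchanges rows $k,\ell$ in the term for~$\si$, the term for $\si\ts(k\,\ell)$ has reached the same intermediate row word and is about to perform the same swap. If this holds, then at every stage the total sum equals $\sum_\si \sign(\si)\,(\text{current word of }\si)$, and at the end every word is $\weight(\bC(\si))$. This gives $\Cdet(A)=\Mdet(A)$.

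\emph{Main obstacle.} The difficulty lies in Step~2: for arbitrary $\si$ and $\si'$ the sorting histories diverge, because $\bC(\si)$ and $\bC(\si')$ have different cycle structures. If the greedy schedule does not synchronize pairs, my first fallback is a many-to-many bookkeeping. I would use the invariance $D(\la)=\Cdet(A)$ from Step~1 to peel off one row at a time by induction on~$n$. Concretely, I would fix the first cycle $C_1=(1,c_2,\ldots,c_m)$, use invariance to bring rows $1,c_2,\ldots,c_m$ to the front in the order in which they are first determined, and then apply induction to the complementary principal submatrix. That submatrix is again RQ, since the relations~\eqref{eq:rq} are inherited by submatrices. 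The inductive statement must therefore be strengthened to sums in which a fixed prefix word in the already placed rows is multiplied on the left. This is still compatible with Step~1, because Step~1 is a local identity in two adjacent slots and holds inside any left and right context.
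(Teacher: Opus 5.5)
Your Step~2 with the greedy schedule is exactly the paper's proof. The paper's $q$-sequences are your intermediate words: the cycle-order prefix $r_1\cdots r_x$, followed by the sorted remaining rows, with $r_{x+1}$ partway through its leftward move. Its map $\Psi$ is your single swap of that row with its left neighbour, and its partner is $\sigma\,(k\;\ell)$, cancelled via the crossing relation. Step~1 is correct but not needed.

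The gap is that you leave the synchronization property as a hypothesis, and even expect it may fail, so as written the lemma is not proved. Yet this check is the entire content of the argument, and it is short. Let $w_t(\sigma)$ be the row word of $\sigma$ after $t$ swaps, frozen once $\sigma$ reaches cycle order. Let $a[\sigma,w]$ be the product of the $a_{r\sigma_r}$ in the order $w$, and set $F_t=\sum_\sigma \sign(\sigma)\,a[\sigma,w_t(\sigma)]$. Suppose at time $t$ the term of $\sigma$ is in the phase moving $r_{x+1}$. Then $r_1=1,\ldots,r_{x+1}$ are determined by $\sigma_{r_1},\ldots,\sigma_{r_x}$ alone: each next row is $\sigma_{r_s}$, or the smallest unused row once a cycle closes. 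The lengths of the phases, and hence $w_t(\sigma)$, depend only on $r_1,\ldots,r_{x+1}$ and $t$. The swap at time $t$ exchanges $\ell=r_{x+1}$ with an unplaced row $k<\ell$, and neither lies in $\{r_1,\ldots,r_x\}$. So $\sigma'=\sigma\,(k\;\ell)$ agrees with $\sigma$ at all placed rows. Hence it has the same $r_1,\ldots,r_{x+1}$, the same word $w_t(\sigma')=w_t(\sigma)$, and performs the same swap at time~$t$; moreover $(\sigma')'=\sigma$. Thus $F_{t+1}-F_t$ splits into such pairs, each vanishing by the second relation in~\eqref{eq:rq}. This gives $\Cdet(A)=F_0=F_T=\Mdet(A)$.

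The divergence of histories that you named as the main obstacle is therefore harmless. The terms $\sigma$ and $\sigma'$ diverge only after the swap at which they are paired, and at the next time step the pairing is recomputed from scratch. No global matching of whole histories is required.

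By contrast, your fallback as literally stated would fail. Suppose you fix the whole first cycle $C_1=(1,c_2,\ldots,c_m)$ and then move its rows forward. The partner $\sigma\,(k\;c_2)$ of a swap moving $c_2$ has $\sigma'_{c_2}=\sigma_k\ne\sigma_{c_2}$, so it lies outside the class with first cycle $C_1$, and that sub-sum is not invariant. You may prescribe only the values at rows already placed, one row at a time, which brings you back to the greedy schedule above.
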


\smallskip

We prove Lemma~\ref{t:cyc-rq} following the bijective approach of Konvalinka and
the first author~\cite{KP07}.  Consider
lattice steps of the form \ts $(x, i) \to (x+1, j)$ \ts for  \ts$x, i, j \in \Z$, \ts $1 \le i, j \le n$.
We think of~$x$ as drawn along the $x$-axis, increasing from left to right, and refer to~$i$
and~$j$ as the \defn{starting} \ts and \defn{ending height}, respectively.  In keeping with
our weight convention, where the arc from~$i$ to~$j$ carries the entry~$a_{ij}$, we represent
the step \ts $(x, i) \to (x+1, j)$ \ts by the variable~$a_{ij}$, and a finite sequence of steps by a
word in the alphabet \ts $\{a_{ij}\}$.

We will consider sequences consisting of \ts $n$ \ts letters \ts $a_{ij}$ \ts whose sets of starting points and
of ending points each have no repetitions, and are exactly \ts $\{1, \ldots, n\}$.  We call such a
sequence \defn{balanced}.  An \defn{$o$-sequence} is a balanced sequence whose columns are
arranged in increasing order.

For a balanced sequence
$$
(0, i_1) \to (1, j_1),\ (1, i_2) \to (2, j_2),\ \ldots,\ (n-1, i_n) \to (n, j_n),
$$
define the \defn{rank}~$r$ as
$$
 r \, := \, \bigl|\set{(s, t) \ \colon \  i_s > i_t,\ 1 \le s < t \le n}\bigr|.
$$
Clearly, $o$-sequences are exactly the balanced sequences of rank~$0$.

Note that the words in the Cayley determinant \eqref{eq:Cdet} are precisely the
$o$-sequences.  Indeed, the term of~$\sigma$ lists its letters by increasing column,
so its starting points run \ts $1, 2, \ldots, n$ in order, while its ending
points are the distinct values $\ts \sigma_1, \ldots, \sigma_n\ts.$

A balanced sequence is called a \defn{$p$-sequence}, if it is the word
\ts $\weight(\bC)$ \ts of the canonical form of a cycle decomposition
$\ts \bC \in \cS_n\ts$, that is, the clow-ordered weight obtained by reading
each cycle from its head and listing the cycles by increasing heads.  In this language,
the summation in Moore's determinant, is taken over all $p$-sequences.

A balanced sequence is called a \defn{$q$-sequence}, if it starts with a prefix of a
$p$-sequence and is then continued with a suffix of an $o$-sequence, with at most one
exceptional step which might have a starting point larger than those of the following
steps to the right.

\begin{ex} \label{ex:poq}
\normalfont
For $\bC = (1\ts 5\ts 2\ts 7)\ts(4)\ts(3\ts 8\ts 6)$, the three types of sequence are
$$
\begin{aligned}
 p &:\quad a_{15}\ts a_{52}\ts a_{27}\ts a_{71}\ts a_{44}\ts a_{38}\ts a_{86}\ts a_{63}, \\
 o &:\quad a_{15}\ts a_{27}\ts a_{38}\ts a_{44}\ts a_{52}\ts a_{63}\ts a_{71}\ts a_{86}, \\
 q &:\quad a_{15}\ts a_{52}\ts a_{27}\ts a_{38}\ts a_{44}\ts a_{71}\ts a_{63}\ts a_{86},
\end{aligned}
$$
the $q$-sequence having $p$-prefix $a_{15}\ts a_{52}\ts a_{27}$ and a single exceptional
step~$a_{71}$.  Drawing each letter~$a_{ij}$ as a step from height~$i$ to height~$j$, the
three paths are as follows.\end{ex}

\begin{center}
\begin{tabular}{@{}c@{\quad}c@{\quad}c@{}}
\stepseqNstepbold{0/1/5, 1/5/2, 2/2/7, 3/7/1, 4/4/4, 5/3/8, 6/8/6, 7/6/3}{
   \node[font=\itshape] at (4,-2.6) {$p$};} \qquad\qquad
&
\stepseqNbold{0/1/5, 1/2/7, 2/3/8, 3/4/4, 4/5/2, 5/6/3, 6/7/1, 7/8/6}{
   \draw[decorate,decoration={brace,mirror,amplitude=2.5pt}]
       (0.05,-0.8) -- (8,-0.8) node[midway,below,yshift=-1pt,font=\tiny]{columns incr.};
   \node[font=\itshape] at (4,-3.4) {$o$};}{0} \qquad\qquad
&
\stepseqNboldhl{0/1/5, 1/5/2, 2/2/7, 3/3/8, 4/4/4, 5/7/1, 6/6/3, 7/8/6}{
   \draw[decorate,decoration={brace,amplitude=2.5pt}]
       (0.05,9.0) -- (3.0,9.0) node[midway,above,yshift=-1pt,font=\tiny]{$p$-pref.};
   \draw[decorate,decoration={brace,amplitude=2.5pt}]
       (3.05,9.0) -- (8.0,9.0) node[midway,above,yshift=-1pt,font=\tiny]{$o$-suf.};
   \draw[red!70!black,->] (5.5,-2.4)
       node[below,font=\tiny,text=red!70!black,align=center]{exceptional step\\$7\!>\!6$}
       -- (5.5,-0.2);
   \node[font=\itshape] at (3.0,-2.4) {$q$};}{3}{5}
\\
\end{tabular}
\vskip.2cm
\text{{\bf Figure~8.1} \. An example of \ts $p$-, \ts $o$- \ts and \ts $q$-sequences.}
\end{center}

\smallskip

\begin{prf}[Proof of Lemma~\ref{t:cyc-rq}]
We define a map~$\Psi$ on $q$-sequences, following the approach by Konvalinka and
the first author~\cite{KP07}.  Given a $q$-sequence, let its longest prefix that already obeys
the rules of a $p$-sequence consist of~$x$ steps, and let~$i$ be the height at the end of
step~$x$.  The map~$\Psi$ swaps the unique step starting at height~$i$ with its immediate left
neighbor.  This single swap raises the rank by~$1$.

A swap of two steps with distinct ends is not by itself a relation, so we carry it out
together with a second $q$-sequence.  This partner has the same rank and agrees with the
first everywhere except that the ends of the two swapped steps are interchanged.  If the
first sequence corresponds to a permutation~$\sigma$, the partner corresponds to
$\sigma' = \sigma\cdot\tau$, where~$\tau$ is the transposition of the two interchanged
ending points; hence the two carry opposite signs, $\sign\sigma' = -\sign\sigma$.  The
second relation in \eqref{eq:rq} now performs the swap in both sequences at once, and because
their signs are opposite the relation applies with the signs intact:
$\mp\,\alpha \pm \beta = \mp\,\Psi\alpha \pm \Psi\beta$.  Example~\ref{ex:swap} illustrates such a
pair.

When a sequence is already a $p$-sequence, $\Psi$ leaves it unchanged.  Iterating the map
therefore rewrites the Cayley determinant in \ts $\cB_n$ \ts as the Moore
determinant, proving Lemma~\ref{t:cyc-rq}.
\end{prf}

\begin{ex} \label{ex:swap}
\normalfont
Consider the two $q$-sequences $\alpha$ and $\beta$ that differ only in the ends of the two
red steps.  Here $\alpha$ corresponds to $\sigma = (5,7,8,4,2,3,1,6)$ with
$\sign\sigma = -1$, while interchanging the ends of the two red steps merges the fixed
point~$4$ into the long cycle, giving $\sigma' = (5,7,8,1,2,3,4,6)$ with
$\sign\sigma' = +1$; the signs shown are these permutation signs.  Applying~$\Psi$ swaps the
two red steps in each row and leaves the signs unchanged, see Figure~8.2.\end{ex}

\begin{center}
\begin{tabular}{@{}c@{\qquad}c@{}}
\stepseqswapC{0/1/5, 1/5/2, 2/2/7, 3/3/8, 4/4/4, 5/7/1, 6/6/3, 7/8/6}{}{4}{5}
&
\stepseqswapC{0/1/5, 1/5/2, 2/2/7, 3/3/8, 4/4/1, 5/7/4, 6/6/3, 7/8/6}{}{4}{5}\\[2pt]
{\footnotesize$\sign\sigma=-1:\ \ \alpha$} & {\footnotesize$\sign\sigma'=+1:\ \ \beta$}\\[10pt]
\multicolumn{2}{c}{\small$\Big\downarrow\ \Psi$ \ (swap the two red steps in each row;\ signs unchanged)}\\[8pt]
\stepseqswapC{0/1/5, 1/5/2, 2/2/7, 3/3/8, 4/7/1, 5/4/4, 6/6/3, 7/8/6}{}{4}{5}
&
\stepseqswapC{0/1/5, 1/5/2, 2/2/7, 3/3/8, 4/7/4, 5/4/1, 6/6/3, 7/8/6}{}{4}{5}\\[2pt]
{\footnotesize$-\,\Psi\alpha$} & {\footnotesize$+\,\Psi\beta$}\\
\end{tabular}

\vskip.35cm
\text{{\bf Figure~8.2} \.  Map \ts $\Psi$ \ts in Example~\ref{ex:swap}.}

\vskip.35cm
\end{center}

\smallskip
\subsection{Moore--Valiant determinantal equality} \label{ss:clow-rq}
In the proof of the Cayley--Moore equality given above, pairs of $q$-sequences remain
pairs of $q$-sequences after a swap.  On the other hand, for pairs of clow
sequences, a sequence of swaps can end up with a pair of balanced words with
no clear description.  Thus, we prove the Moore--Valiant equality using a
different method, namely induction.

\begin{lemma} \label{t:clow-rq}
Let \. $A=(a_{ij}) \in \cB_n$ \. be an RQ matrix.  Then we have:
$$
\Mdet(A) \, = \, \Vdet(A).
$$
\end{lemma}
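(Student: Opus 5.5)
We will prove Lemma~\ref{t:clow-rq} by showing that the total signed weight of $\cC_n^{\ts\tancirc}$ vanishes in $\cB_n$. We use the same merged/split pairing $\{\bC,\bC'\}$ as in the proof of Lemma~\ref{t:clow-cf-qij}, reading each clow from its right end toward the head (Remark~\ref{rem:MV-diff}). Instead of cancelling orbit by orbit, we group orbits into larger families and induct on $n$, as the paper suggests.

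Fix the data that the pairing does not alter:
\begin{itemize}
\item the index $p$;
\item the clows $C_1,\ldots,C_{p-1}$;
\item the repeated element $c_j$, together with the portion of $C_p$ outside the extracted cycle.
\end{itemize}
We then sum over all choices of the extracted cycle $D$ and of the disjoint cycles to its right. For a merged $\bC$ the word has the form $u\,\weight(D)\,v\,w$, where $w$ is the weight of the trailing disjoint cycles. The split partner has the form $u\,v\,(\text{cycles with } D \text{ inserted in head order})$.

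In the CF case the block $\weight(D)$ could simply be commuted past $v$ and past the intervening cycles. In the RQ case each adjacent swap of $a_{\ell j}a_{ki}$ with $k<\ell$ produces a second term, namely the one with the ends interchanged. This is exactly the move used in $\Psi$ in the proof of Lemma~\ref{t:cyc-rq}.

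The plan is to perform these swaps not on single words but on the whole signed sum over the ``tail''. That sum is a sum over permutations of the set $T$ of labels occupied by $D$ together with the trailing cycles, with the letters of $v$ interleaved. After fixing the prefix $u$, the tail sum is a Moore determinant of a submatrix. In the merged family this submatrix has one row index ($c_j$) used twice and one column index ($c_j$) used twice: its rows are indexed by $T$ with $c_j$ doubled, and it is read in a mixed order.

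Using Lemma~\ref{t:cyc-rq} on the smaller matrix, we convert this tail Moore sum to a Cayley sum. The rightmost-first reading guarantees that no two factors in the same row are ever swapped, which is precisely the situation in which the relations \eqref{eq:rq} suffice. Then the difference between the merged-family sum and the split-family sum should reduce to the Cayley determinant of a matrix whose column $c_j$ appears twice. By Lemma~\ref{lem:equal-columns}, that determinant vanishes. Induction on $n$, or on the length of the tail, handles the smaller Moore sums that appear.

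The main obstacle is the bookkeeping step: checking that the extra terms generated by the second relation of \eqref{eq:rq} stay inside the family being summed. When $D$ is moved past $v$, the interchange of ends must produce words that are again merged or split sequences with the same fixed data, or that assemble into the equal-column Cayley determinant. My first attempt is to choose the family as ``all words with a fixed prefix $u$ and a fixed multiset of steps in the tail.'' I would then verify, as in the $q$-sequence argument, that partner pairs carrying opposite signs are swapped simultaneously. If the family turns out not to be closed, I would enlarge it to all balanced tails on $T$ with the doubled label, and argue that the total equals the tail Moore determinant minus the corresponding Valiant sum, which lets the induction hypothesis close the argument.
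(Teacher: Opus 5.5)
Your grouping is too fine, and the step you call bookkeeping is actually false: the signed sum over a family with both $u$ and $v$ fixed need not vanish in $\cB_n$.

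Take $n=5$, $u=a_{12}$ (so $h=1$, $c_j=2$) and $v=a_{23}a_{31}$. Your family is $(1\,2\,2\,3)(5)$, $(1\,2\,5\,2\,3)$, $(1\,2\,3)(2)(5)$, $(1\,2\,3)(2\,5)$, with signed sum
\[
a_{12}\bigl(a_{23}a_{31}a_{22}a_{55}-a_{22}a_{23}a_{31}a_{55}+a_{25}a_{52}a_{23}a_{31}-a_{23}a_{31}a_{25}a_{52}\bigr).
\]
Specialize to the following RQ matrix of operators on $\mathbb{C}[u_1,\ldots,u_5]$, with $\partial_i=\partial/\partial u_i$:
\begin{itemize}
\item $a_{12}=a_{55}=1$, and every other $a_{cr}$ with $c\in\{1,4,5\}$ equal to $0$;
\item $a_{2r}=u_r$;
\item $a_{3r}=\partial_{\pi(r)}$ with $\pi=(1\,2)$.
\end{itemize}
The crossing relations hold because $[u_i,\partial_{\pi(j)}]=-\delta_{i,\pi(j)}$ is symmetric in $i,j$. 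Yet the sum maps to $u_3(\partial_2u_2-u_2\partial_2)=u_3\neq 0$. The term that cancels it lies in the family with $v=a_{21}$: the orbit of $(1\,2\,3\,2)(5)$ from Example~\ref{ex:rq-joint} maps to $-u_3$. So no order of swaps can keep the extra terms inside your family.

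Your ``first attempt'' (fixed $u$ and fixed multiset of tail steps) is even finer. It pins down a single merged/split orbit, which Example~\ref{ex:rq-joint} already shows does not cancel. Your ``tail Moore determinant'' also has a problem: with a doubled row and column index, read in mixed order, on the labels of $T$ only, it is not the Moore determinant of any square matrix. So Lemma~\ref{t:cyc-rq} cannot be applied to it. Enlarging the family to all tails on $T$ still leaves out the labels of~$v$.

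The missing idea is that nothing has to be moved.
\begin{itemize}
\item Fix only the prefix through the first repeat $e'=c_j$, i.e.\ $C_1,\ldots,C_{p-1}$ and $c_1,\ldots,c_j$, and sum over \emph{all} completions.
\item By the choice of $e'$, the labels after $e'$ are distinct and form a set $S$ with $e'\in S$ and $h<\min S$.
\item Reading the arc leaving $e'$ as an arc leaving $h$ gives a bijection between completions and cycle decompositions of $\{h\}\cup S$ for the matrix $Q$ with $Q_{hr}=a_{e'r}$ and $Q_{cr}=a_{cr}$ for $c\neq h$ (where $c,r\in\{h\}\cup S$).
\item After stripping the common prefix word, this bijection preserves weights and changes signs by a factor that is constant on the family. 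Hence each family sums to $\pm(\text{prefix word})\cdot\Mdet(Q)$.
\item $Q$ is again RQ and has two equal columns, so $\Mdet(Q)=\Cdet(Q)=0$ by Lemmas~\ref{t:cyc-rq} and~\ref{lem:equal-columns}.
\end{itemize}
This is in substance the paper's argument: its types~$1$ and~$2$ reduce to type~$3$ by stripping exactly this prefix. It needs neither the swap relations nor an induction on the tail.
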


\begin{ex} \label{ex:rq-joint}
\normalfont
Take \ts $n = 5$, and consider four clow sequences in \ts $\cC_5\ts$:
$$
 \bC_1 = (1\ts 2\ts 2\ts 3)(5), \qquad
 \bC_1' = (1\ts 2\ts 3)(2)(5), \qquad
 \bC_2 = (1\ts 2\ts 3\ts 2)(5), \qquad
 \bC_2' = (1\ts 2)(2\ts 3)(5),
$$
forming two orbits \ts $\{\bC_1, \bC_1'\}$ \ts and \ts $\{\bC_2, \bC_2'\}$.
In this case, we have:
$$
 \weight(\bC_1) = a_{12}\ts a_{22}\ts a_{23}\ts a_{31}\ts a_{55}\., \qquad
 \weight(\bC_1') = a_{12}\ts a_{23}\ts a_{31}\ts a_{22}\ts a_{55}\.,
$$
$$
 \weight(\bC_2) = a_{12}\ts a_{23}\ts a_{32}\ts a_{21}\ts a_{55}\., \qquad
 \weight(\bC_2') = a_{12}\ts a_{21}\ts a_{23}\ts a_{32}\ts a_{55}\..
$$
Here \ts $\sign(\bC_1) = \sign(\bC_2) = -1$ \ts and \ts
$\sign(\bC_1') = \sign(\bC_2') = +1$, and the column relation
in~\eqref{eq:rq} reduces the two signed orbits to
$$
 \weight(\bC_1') - \weight(\bC_1) \, = \, a_{12}\ts a_{23}\ts
 \bigl(a_{31}\ts a_{22} - a_{22}\ts a_{31}\bigr)\ts a_{55}\ts,
$$
$$
 \weight(\bC_2') - \weight(\bC_2) \, = \, a_{12}\ts a_{23}\ts
 \bigl(a_{21}\ts a_{32} - a_{32}\ts a_{21}\bigr)\ts a_{55}\ts.
$$
Neither is zero in \ts $\cB_5\ts$, since the relations \eqref{eq:rq} force neither
$a_{31}$ to commute with~$a_{22}$ nor $a_{21}$ with~$a_{32}$.  Their sum, however,
vanishes: the crossing relation in~\eqref{eq:rq} with \ts $i=1$, $j=2$, $k=2$,
$\ell=3$ \ts makes the inner
factor $\bigl(a_{31}\ts a_{22} - a_{22}\ts a_{31}\bigr) + \bigl(a_{21}\ts a_{32} -
a_{32}\ts a_{21}\bigr)$ identically zero, so
$$
 [\weight(\bC_1') \. - \. \weight(\bC_1)] \. + \. [\weight(\bC_2')\. - \. \weight(\bC_2)] \, = \, 0 \quad \text{in \ $\cB_5\ts,$}
$$
even though neither orbit cancels separately.
\end{ex}

\subsection{Clow types} \label{ss:collective}

In the CF case of Section~\ref{s:CF}, the proof of
Lemma~\ref{t:clow-cf-qij} shows that \ts
$\weight_\bq(\bC) = \weight_\bq(\bC')$ \ts in \ts $\cA_n^\bq$ \ts for each
orbit \ts $\{\bC, \bC'\}$ \ts separately.  Here
this fails: in Example~\ref{ex:rq-joint}, neither \ts
$\weight(\bC_1') - \weight(\bC_1)$ \ts nor \ts
$\weight(\bC_2') - \weight(\bC_2)$ \ts is zero in~$\cB_5\ts$, and yet their sum is zero.

Since the orbit pairing no longer suffices, we prove the collective cancellation of the
non-permutation clow sequences by a coarser grouping: we partition them into several
types and show that the signed weights within each
type sum to zero in \ts $\cB_n\ts$.

Read the clow sequence from right to left, taking the clows in reverse order and each
clow from its right end toward its head.  Let~$e'$ be the first element of this reading
that repeats one already read, let~$e$ be the earlier occurrence it repeats, and let~$h$
be the head of the clow containing~$e'$.  We call that clow \defn{defected}.

Passing to the corresponding split clow sequence separates~$e$ and~$e'$ into two
distinct clows.  We call the one carrying~$e$, namely the clow with the larger head, the
\defn{right split}, and the one carrying~$e'$ the \defn{left split}.
\smallskip

\begin{ex} \label{ex:defected}
\normalfont
The merged clow sequence
$$
 \bC \, = \, (\ts\underset{h}{\underline{1}}\ts 7\ts 2\ts 5\ts \underset{e'}{\underline{\mathbf{5}}}\ts 3\ts 8\ts \underset{e}{\underline{\mathbf{5}}}\ts 7)\ts(2)\ts(4\ts 6\ts 9)
$$
of Example~\ref{ex:merged-split} is read from right to left as
$9,6,4,\ts 2,\ts 7,\mathbf{5},8,3,\mathbf{5},\ldots$, the repeated pair shown in bold:
the second occurrence is the first repeat $e' = \mathbf{5}$, and the earlier one is $e = \mathbf{5}$.
The clow $(1\ts 7\ts 2\ts 5\ts 5\ts 3\ts 8\ts 5\ts 7)$ is the defected clow of~$\bC$, and
the corresponding split clow sequence is
$$
 \bC' \, = \, (\ts\underset{h}{\underline{1}}\ts 7\ts 2\ts 5\ts \underset{e'}{\underline{\mathbf{5}}}\ts 7)\ts(2)\ts(3\ts 8\ts \underset{e}{\underline{\mathbf{5}}})\ts(4\ts 6\ts 9),
$$
with the same three elements underlined: the right split $(3\ts 8\ts 5)$ carries~$e$ and
the left split $(1\ts 7\ts 2\ts 5\ts 5\ts 7)$ carries~$e'$.
\end{ex}
\smallskip

\begin{Def} \label{def:type1}
\normalfont
An orbit is of \defn{type~$1$} if, in the defected clow of its merged sequence, the
head~$h$ is followed by an element other than~$e'$.
\end{Def}
\smallskip

\begin{ex} \label{ex:type1}
\normalfont
The merged clow sequence
$$
 \bC \, = \, (\ts\underset{h}{\underline{1}}\ts 7\ts 2\ts 5\ts \underset{e'}{\underline{5}}\ts 3\ts 8\ts \underset{e}{\underline{5}}\ts 7)\ts(2)\ts(4\ts 6\ts 9)
$$
of Example~\ref{ex:defected} has the block $7\ts 2\ts 5$ between~$h$ and~$e'$, so the
head~$h$ is followed by~$7 \ne e'$.  Hence this orbit is of type~$1$.
\end{ex}

\smallskip
\begin{Def} \label{def:type2}
\normalfont
An orbit is of \defn{type~$2$} if, in the defected clow of its merged sequence, the
head~$h$ is followed by~$e'$, and the sequence contains another clow whose head is
smaller than~$h$.
\end{Def}
\smallskip

\begin{ex} \label{ex:type2}
\normalfont
In the merged clow sequence
$$
 \bC \, = \, (1\ts 3\ts 2)\ts(\ts\underset{h}{\underline{3}}\ts \underset{e'}{\underline{6}}\ts 4\ts \underset{e}{\underline{6}})\ts(5\ts 2),
$$
the defected clow is $(3\ts 6\ts 4\ts 6)$, whose head~$h = 3$ is immediately followed
by~$e' = 6$.  Moreover the clow $(1\ts 3\ts 2)$ has head $1 < h$, so this orbit is of
type~$2$.  Its corresponding split clow sequence is
$$
 \bC' \, = \, (1\ts 3\ts 2)\ts(\ts\underset{h}{\underline{3}}\ts \underset{e'}{\underline{6}})\ts(4\ts \underset{e}{\underline{6}})\ts(5\ts 2),
$$
with the same three elements underlined.  Here the left split $(3\ts 6)$ is not the
leftmost clow of~$\bC'$: the smaller-head clow $(1\ts 3\ts 2)$ stands to its left.
\end{ex}
\smallskip

\begin{Def} \label{def:type3}
\normalfont
An orbit is of \defn{type~$3$} if it is not of type~$1$
or type~$2$.
\end{Def}
\smallskip

\begin{ex} \label{ex:type3}
\normalfont
In the merged clow sequence
$$
 \bC \, = \, (\ts\underset{h}{\underline{1}}\ts \underset{e'}{\underline{5}}\ts 3\ts \underset{e}{\underline{5}}\ts 2)\ts(4\ts 8)\ts(6\ts 7),
$$
the head~$h$ is followed by~$e'$ and no clow of smaller head precedes the defected clow,
so the orbit is neither of type~$1$ nor of type~$2$.  Hence this orbit is of type~$3$.
\end{ex}
\smallskip

\subsection{Collective cancellation} \label{ss:cancellation}

We now use a double induction, to prove that the signed sums of
non-permutation clow sequences vanish in \ts $\cB_n\ts$, type by type.

{\small
\begin{rem} \label{r:support-interval}
\normalfont
Whenever we wish to prove that a signed sum of clow sequences supported on a fixed
multiset of elements vanishes in \ts $\cB_n\ts$, we may assume without loss of
generality that the support is an interval $\{1, 2, \ldots, k\}$, possibly with
repetitions.  Indeed, if the support has a gap, we shift all larger elements down,
preserving their order, so as to fill it.
\end{rem}}

\smallskip

\begin{indasm} \label{ia:types}
\normalfont
Fix $m \ge 1$, the number of elements in the support multiset. We assume that for any $n\leq m$:
\begin{enumerate}
\item the signed sum of the clow sequences of type~$1$ supported on a fixed multiset with
$n$ elements that share a common prefix
$$
 \underline{C_1\ts C_2 \cdots (h\ts t_1\ts t_2 \cdots t_k\ts e'}\ts \cdots) \cdots
$$
vanishes in \ts $\cB_n\ts$;
\item the signed sum of the clow sequences of type~$2$ supported on a fixed multiset with
$n$ elements that share a common prefix
$$
 \underline{C_1\ts C_2 \cdots (h\ts e'}\ts \cdots) \cdots
$$
vanishes in \ts $\cB_n\ts$;
\item the signed sum of the clow sequences of type~$3$ supported on a fixed multiset with
$n$ elements that share a common prefix
$$
 \underline{(h\ts e'}\ts \cdots) \cdots
$$
vanishes in \ts $\cB_n\ts$.
\end{enumerate}
\end{indasm}

\smallskip

\begin{lemma} \label{l:type1}
For any integer $n \ge 1$, the signed sum of the clow sequences of type~$1$ supported on
a fixed multiset with $n$ elements that share a common prefix
$$
 C_1\ts C_2 \cdots (h\ts t_1\ts t_2 \cdots t_k\ts e' \cdots) \cdots
$$
vanishes in \ts $\cB_n\ts$.
\end{lemma}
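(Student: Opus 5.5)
We argue by induction on $k$, the length of the block $t_1\cdots t_k$ between the head $h$ and $e'$ in the defected clow. We use the Inductive Assumption~\ref{ia:types} for multisets with fewer elements, and also for the same multiset but with a shorter block. Fix the common prefix $C_1\cdots C_{p-1}(h\,t_1\cdots t_k\,e'$. The set of type-$1$ clow sequences with this prefix decomposes according to everything after $e'$. Both the merged sequence $\bC$ and its split partner $\bC'$ begin with the full prefix: the right split is extracted only from the part of the defected clow to the right of $e'$. So the signed sum has the form
$$
 w_0\, a_{h t_1}\, a_{t_1 t_2}\cdots a_{t_{k-1} t_k}\, a_{t_k e'}\cdot X,
$$
where $w_0=\weight(C_1)\cdots\weight(C_{p-1})$. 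The key point is to recognise $X$ as a signed sum of clow-sequence weights on a smaller support.

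Our first step is to delete the last letter $t_k$ of the block by the contraction $t_k\to e'$. Replace the factor $a_{t_{k-1}t_k}\,a_{t_k e'}$ by the single letter $a_{t_{k-1}e'}$. (When $k=1$, replace $a_{ht_1}a_{t_1e'}$ by $a_{he'}$.) Formally, the tail $X$ depends only on the set of elements still available and on the current endpoint $e'$, not on how we arrived at $e'$. Hence the family of type-$1$ sequences with the given prefix is in sign-preserving bijection with the family of sequences with the shortened prefix $C_1\cdots C_{p-1}(h\,t_1\cdots t_{k-1}\,e'$ on the multiset with one copy of $t_k$ removed. The bijection preserves the merged/split structure, because the first repeat $e'$, its partner $e$, and the defected clow are all determined by reading from the right, and the removed letter $t_k$ lies to the left of $e'$. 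By Remark~\ref{r:support-interval} we may relabel so that the support is an interval.

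The smaller family is again of type~$1$ if $k\ge 2$, and of type~$2$ or type~$3$ if $k=1$: the latter according to whether some clow among $C_1,\dots,C_{p-1}$ has head smaller than $h$. Its signed sum vanishes by Inductive Assumption~\ref{ia:types}(i), (ii) or (iii), applied with $n-1$ elements. We will have
$$
 \sum \sign\cdot\weight \;=\; w_0\,a_{ht_1}\cdots a_{t_{k-1}t_k}\,a_{t_ke'}\,X,
$$
while the contracted sum equals $w_0\,a_{ht_1}\cdots a_{t_{k-1}e'}\,X$. Since $X$ is the same formal element of $\cB_n$ in both, it is enough to show that the vanishing of the contracted sum forces $X=0$ in $\cB_n$, or else to run the induction directly on the tail. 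The cleaner choice is to strengthen the inductive statement to the vanishing of the tail sum $X$ itself, for a given current endpoint and a given remaining multiset. Then type~$1$ is immediate from the tail statement for the type-$2$ and type-$3$ data: the prefix is a left factor and the ideal $I_{\rqq}$ is two-sided.

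The main obstacle is to check that after the prefix is fixed, the sign and the merged/split pairing on the tail are genuinely independent of $t_1,\dots,t_k$. The sign depends on the number of clows, and the prefix contributes a fixed number of clows. The subtle point is that the reading for the first repeat is performed from the right, so occurrences of $t_i$ in the tail could in principle produce an earlier repeat and change the defected clow. We rule this out by noting that the defining property of the prefix fixes $e'$ as the first repeat, so such occurrences are excluded from the family under consideration. If this exclusion fails to be uniform, the fallback is the explicit two-term swap argument of Lemma~\ref{t:cyc-rq}, applied to pairs differing in the ends of $t_k$ and $e'$.
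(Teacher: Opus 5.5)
Your argument is correct and, once you adopt the strengthened claim that the tail sum $X$ itself vanishes, it is the paper's proof. The paper deletes the common clows and the whole block $t_1\cdots t_k$ at once and identifies the remaining suffix sum with the tail of the type-$3$ family beginning $(h\,e'\cdots)$. The type-$3$ argument shows that this tail equals $-\Mdet(Q)=0$ directly; that is what actually justifies the paper's ``factoring out $a_{he'}$'' step, which is exactly the cancellation issue you flagged. The paper then multiplies back by the common left prefix, so your letter-by-letter contraction and induction on $k$ are unnecessary.

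Two minor corrections. Your contraction reverses all signs rather than preserving them, since the length drops by one while the number of clows is unchanged; this is harmless. Occurrences of the $t_i$ in the tail are not excluded from the family; they are simply irrelevant, because the right-to-left reading meets the entire tail before it reaches $e'$.
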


\begin{prf}
We argue under Inductive Assumption~\ref{ia:types}.
Deleting the common clows left of the defected clow and the block \ts $t_1 \cdots t_k$ \ts turns
the sum into the signed sum of the type-$3$ sequences of order \ts $n < m$, beginning with \ts
$(\ts h\ts e' \cdots) \cdots$.  This sum is zero in \ts $\cB_n$ \ts by induction.

Note that every term in the sum above begins \. $a_{h e'}\ts a_{e', *}\cdots$,
so factoring out~$\ts a_{h e'}$ \ts shows that the signed sum of the suffixes~$S$
is zero in \ts $\cB_n\ts$.  In the original sum, every term shares the prefix
$$
 \underset{\text{common prefix}}{\underline{\cdots\ts a_{h t_1}\ts a_{t_1 t_2} \cdots a_{t_{k-1}\ts t_k}\ts a_{t_k\ts e'}}}\ts a_{e', *}\cdots,
$$
and factoring it out leaves the same suffixes~$S$.  Multiplying by this common
prefix on the left, we conclude that the original sum vanishes in \ts $\cB_n\ts$.
\end{prf}

\begin{lemma} \label{l:type2}
For any integer $n \ge 1$, the signed sum of the clow sequences of type~$2$ supported on
a fixed multiset with $n$ elements that share a common prefix
$$
 C_1\ts C_2 \cdots (h\ts e' \cdots) \cdots
$$
vanishes in \ts $\cB_n\ts$.
\end{lemma}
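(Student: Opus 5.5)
We follow the pattern of the proof of Lemma~\ref{l:type1} and work under Inductive Assumption~\ref{ia:types}. The idea is to peel off the common clows \ts $C_1 \cdots C_r$ \ts that precede the defected clow. Every clow sequence in the sum shares this prefix, so each term has the form \ts $\weight(C_1)\cdots\weight(C_r)\cdot S$. Here \ts $S$ \ts is the weight of the remaining clow sequence, which starts with \ts $(h\ts e'\cdots)$. Since multiplication on the left by a fixed element preserves vanishing in \ts $\cB_n$, it suffices to show that the signed sum of the suffixes \ts $S$ \ts vanishes. The sign bookkeeping causes no trouble: \ts $\sign(\bC)=(-1)^{n-k}$, and the prefix contributes a fixed number of clows and a fixed length, so the overall sign factors out as a global constant.

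The next step is to identify these suffixes as a type~$3$ family on a smaller support. Deleting \ts $C_1,\ldots,C_r$ \ts removes at least one element, since \ts $r\ge 1$ \ts by the definition of type~$2$. What remains is a clow sequence on a multiset of size \ts $n'<n\le m$. Its first clow is the defected clow \ts $(h\ts e'\cdots)$, so no clow of smaller head precedes it.

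We must also check that the defect data is unchanged when the prefix clows are removed. The reading used to define \ts $e,e'$ \ts goes from right to left, and by definition the first repetition occurs within the defected clow against elements to its right. Removing clows to the left of the defected clow therefore does not change \ts $e'$, \ts $e$, \ts $h$, or which clow is defected. Conversely, appending the fixed prefix to any type~$3$ sequence beginning \ts $(h\ts e'\cdots)$ \ts yields a type~$2$ sequence with the given prefix. The only condition to verify is that the heads still increase, which holds because every head in the prefix is smaller than \ts $h$ \ts and smaller than all later heads.

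It follows that the correspondence between the family in the statement and the type~$3$ family of Inductive Assumption~\ref{ia:types}(iii) on the reduced multiset is a bijection. After relabelling by Remark~\ref{r:support-interval}, that signed sum vanishes in \ts $\cB_{n'}$, and hence in \ts $\cB_n$ \ts via the natural inclusion of generators.

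The main obstacle is the well-definedness of this bijection. The merged/split partner of a type~$2$ sequence must itself have the same prefix and remain of type~$2$, rather than type~$1$. Also, the elements occurring in the prefix may reappear to the right, since the multiset is fixed but allows repetitions. This could make the reduced sequence's first repetition differ from the original one only if the first repetition involved a prefix element. However, elements of the prefix are read after the defected clow, and by then the first repetition has already been found. Finally, we need that the relabelling map of Remark~\ref{r:support-interval} respects the relations \eqref{eq:rq}, which holds because it is order-preserving.
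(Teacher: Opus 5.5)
Your proposal is correct and follows the paper's route. The paper proves this lemma by calling it analogous to Lemma~\ref{l:type1}: deleting the common clows $C_1,\ldots,C_r$ to the left of the defected clow turns the family into the type-$3$ family of Inductive Assumption~\ref{ia:types}(iii) on a smaller support, which vanishes (in fact already by Lemma~\ref{l:type3}, whose proof uses no induction), and one then multiplies on the left by the common factor $\weight(C_1)\cdots\weight(C_r)$, exactly as you do. Your handling of the sign, of the invariance of $h,e,e'$ under deleting the prefix clows, and of the order-preserving relabelling is sound; and because you only multiply a vanishing sum on the left, never cancel a left factor, you avoid the unjustified left-cancellation of $a_{he'}$ that appears in the paper's type-$1$ proof.
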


\begin{prf}
Completely analogous to the proof of Lemma~\ref{l:type1}.
\end{prf}

\begin{rem} \label{r:type3-support}
\normalfont
A non-permutation clow sequence that is not of type~$1$ or type~$2$ has only one pair of
repeated elements, namely $e'$ and~$e$; all of its remaining elements are distinct.
\end{rem}

\begin{lemma} \label{l:type3}
For any integer $n \ge 1$, the signed sum of the clow sequences of type~$3$ supported on
a fixed multiset with $n$ elements that share a common prefix
$$
 (h\ts e' \cdots) \cdots
$$
vanishes in \ts $\cB_n\ts$.
\end{lemma}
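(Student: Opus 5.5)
The plan is to reduce the type~$3$ sum to a Moore determinant of a matrix with two equal columns, which then vanishes by Lemma~\ref{lem:C-M-det} and Lemma~\ref{lem:equal-columns}. This is the strategy announced in the proof outline. By Remark~\ref{r:support-interval} we may assume the support is $\{1,\ldots,n-1\}$ with exactly one element repeated. By Remark~\ref{r:type3-support}, this repeated element is $e'=e$, and all other elements are distinct. Since no clow of smaller head precedes the defected clow, the head $h$ is the minimum of the support, so we may take $h=1$.

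Every type~$3$ term therefore has the form $a_{1e'}\,a_{e',*}\cdots$, where the continuation is a walk that revisits $e'$ exactly once. In the merged form, the second visit to $e'$ lies inside the clow headed at $1$. In the split form, a cycle through $e'$ is separated off and inserted in head order. The first step is to view the sum of suffixes $S$ (after factoring out $a_{1e'}$) as a signed sum over cycle-like structures on the $n-1$ distinct labels $\{1,\ldots,n\}\setminus\{\cdot\}$, in which the label $e'$ appears twice.

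The second step is to split the vertex $e'$ into two labels $e'$ and $e''$, enlarging the alphabet by one. The occurrence that follows $h$ keeps the row $e'$, and the other occurrence becomes $e''$. We then pass to the matrix $A'$ obtained by declaring the row of $e''$ and the column of $e''$ to be copies of row and column $e'$. One has to check that $A'$ is still RQ in the relevant sense; since the relations \eqref{eq:rq} are linear in each pair of rows and columns, duplicating a column preserves them.

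Under this relabelling, the signed type~$3$ words with fixed prefix $(1\,e'\cdots)$ should correspond bijectively, with matching signs, to the terms of $\Mdet$ of an auxiliary matrix. Concretely, these are the cycle decompositions of a permutation of the enlarged label set in which $1\mapsto e'$ and the columns $e'$ and $e''$ coincide. The merged/split dichotomy corresponds to whether $e'$ and $e''$ lie in the same cycle or in different cycles. The sign change $k\mapsto k+1$ matches the transposition of the two equal columns. Thus $S$ equals $\Mdet(B)$ for a matrix $B$ with two equal columns. By Lemma~\ref{lem:C-M-det}, $\Mdet(B)=\Cdet(B)$, and by Lemma~\ref{lem:equal-columns}, $\Cdet(B)=0$. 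Multiplying back by $a_{1e'}$ on the left gives the claim.

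The main obstacle is making this correspondence exact. The Moore expansion of $B$ lists cycles in head order with the canonical rotation. One must check that each such term, after identifying $e''$ with $e'$, is literally the word of a type~$3$ clow sequence with the prefix $(1\,e'\cdots)$, and that no terms of other types, or with $1\not\mapsto e'$, sneak in. To handle the latter, I would first fix the column of $B$ in position $1$ to be the unit vector at $e'$, i.e. restrict to the minor, so that the prefix is forced. I would then check the head-order placement of the split cycle against the right-to-left reading that defines $e'$. If the induced terms do not match perfectly, I would fall back on the Inductive Assumption~\ref{ia:types} for the smaller leftover sums, using Lemmas~\ref{l:type1} and~\ref{l:type2}.
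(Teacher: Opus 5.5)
Your argument is correct and has the same skeleton as the paper's proof. You factor out the common letter $a_{he'}$, identify what remains with the Moore determinant of an auxiliary RQ matrix having two equal columns, and finish with Lemma~\ref{t:cyc-rq} and Lemma~\ref{lem:equal-columns}. The auxiliary matrix, however, is built differently.

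\emph{The paper's construction.} It \emph{deletes} the copy of $e'=k$ that follows the head. The two arcs $h\to k\to x$ then collapse to a single arc $h\to x$, whose letter $a_{kx}$ is an entry of a matrix $Q$ of the original size. In $Q$ only the first column is changed, replaced by a copy of column $k$. The resulting bijection reverses signs, $R_n=-\Mdet(Q)$, and the RQ property of $Q$ only needs checking for the one replaced column.

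\emph{Your construction.} You instead \emph{double} the vertex $e'$. This gives a matrix $B$ one size larger, in which row and column $e''$ are copies of row and column $e'$, and the first column is the unit vector at $e'$. That unit column kills every $\tau$ with $\tau(1)\ne e'$ and absorbs the factor $a_{1e'}$. Your bijection preserves signs.

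\emph{Comparison.} The paper's route is more economical. Yours has a conceptual bonus: the merged/split orbit pairing becomes $\tau\mapsto\tau\circ(e'\,e'')$, the transposition of the two equal columns. This explains why the cancellation only happens collectively.

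To make your version airtight, three points need fixing.

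First, the RQ property of $B$ does not follow from ``linearity''; the relations are quadratic. The correct checks are these.
\begin{enumerate}
\item The unit column is scalar, so every relation involving it is trivial.
\item For the equal columns $e',e''$, the crossing relation reduces to $[a_{e'i},a_{e'j}]=0$, which is column commutativity.
\item For the equal rows $e',e''$, it reduces to $0=0$.
\item All other relations are relations of $A$ after identifying $e''$ with $e'$. This uses that for $q=1$ the crossing relation $[a_{ki},a_{\ell j}]=[a_{kj},a_{\ell i}]$ is invariant under $k\leftrightarrow\ell$ and under $i\leftrightarrow j$.
\end{enumerate}

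Second, you must fix where $e''$ sits in the order: put it immediately after $e'$. Then each clow's head is still its unique minimum, since a clow headed by the second copy of $e'$ contains no other copy. Head order is also unchanged. So the canonical cycle form of $\tau$ is literally the type-$3$ word with $e''$ read as $e'$. Conversely, every $\tau$ with $\tau(1)=e'$ yields a type-$3$ sequence with prefix $(1\,e'\cdots)$: in the right-to-left reading, the copy in position two of the first clow is read last, so it is the first repeat. Hence $S=\Mdet(B)$ exactly, and your fallback to Inductive Assumption~\ref{ia:types} is unnecessary.

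Third, keep the unit column inside $B$ rather than passing to a minor. What matches $S$ term by term is $\Mdet(B)$ itself.
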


\begin{prf}
We argue under Inductive Assumption~\ref{ia:types}.
By Remark~\ref{r:type3-support}, we may assume that the support is
$\{1, 2, \ldots, k, k, \ldots, n\}$ with the single repeated pair $e' = e = k$,
and head $h = 1$.  Let
$$
 T_n \, = \, \sum_{\bC} \, \sign(\bC) \. \weight(\bC)
$$
be the signed sum of all type-$3$ clow sequences supported on this multiset.

Every such sequence has its defected clow of the form $(h\ts e' \cdots)$, so its weight
begins with the arc $h \to e'$, that is, with the letter $a_{h e'}$.  Writing
$\weight(\bC) = a_{h e'}\, w(\bC)$, where \ts $w(\bC)$ \ts denotes the product
of the remaining \ts $n$ \ts letters of \ts $\weight(\bC)$, we factor this
common letter out:
$$
 T_n \, = \, a_{h e'}\, R_n\,, \qquad R_n \, = \, \sum_{\bC} \, \sign(\bC) \. w(\bC).
$$

We claim that
$$
 R_n \, = \, -\,\Mdet(Q)\ts,
$$
where \ts $Q=(Q_{cr})$ \ts is the \ts $n\times n$ \ts matrix obtained
from~$A$ by replacing its first column with a copy of its $k$-th column:
$$
 Q_{1r} \, = \, a_{kr}\,, \qquad Q_{cr} \, = \, a_{cr} \ \ (2 \le c \le n).
$$
Indeed, each entry of~$Q$ is one of the original entries~$a_{cr}$, and the
columns of~$Q$ form a sub-collection of the columns of~$A$, so the entries
of~$Q$ satisfy the column and crossing relations in~\eqref{eq:rq} and \ts
$Q\in \cB_n$ \ts is again an RQ matrix.  Now, deleting the distinguished
repeated~$k$ that follows the head of the defected clow turns a type-$3$ clow
sequence on \ts $\{1, \ldots, k, k, \ldots, n\}$ \ts into a cycle
decomposition \ts $\bC\in \cS_n\ts$, the split arcs \ts
$h \to k \to (\cdot)$ \ts collapsing to a single arc whose letter belongs to
the first column of~$Q$, and conversely.  This bijection preserves the weights
and reverses the signs, since the number of clows is unchanged while the
length drops by one, which proves the claim.

By Lemma~\ref{t:cyc-rq} applied to~$Q$, we have \ts
$\Mdet(Q) = \Cdet(Q)$ \ts in \ts $\cB_n\ts$.  The first and the $k$-th
columns of~$Q$ are equal, both being the $k$-th column of~$A$, and the Cayley
determinant of an RQ matrix with two equal columns vanishes in \ts
$\cB_n\ts$ by Lemma~\ref{lem:equal-columns}.  Hence
$$
 R_n \, = \, -\,\Mdet(Q) \, = \, -\,\Cdet(Q) \, = \, 0
 \quad \text{in} \ \ \cB_n\ts.
$$
Multiplying by the common left factor~$a_{h e'}$, we conclude that \ts
$T_n = a_{h e'}\, R_n$ \ts vanishes in \ts $\cB_n\ts$, as desired.
\end{prf}

\begin{ex} \label{ex:type3-1223}
\normalfont
We illustrate the proof on the multiset $\{1, 2, 2, 3\}$, so $k = 2$ and $n = 3$.  There
are six type-$3$ clow sequences, all with defected clow $(1\ts 2 \cdots)$, namely
$(1\ts 2)(2)(3)$, $(1\ts 2)(2\ts 3)$, $(1\ts 2\ts 2)(3)$, $(1\ts 2\ts 2\ts 3)$,
$(1\ts 2\ts 3\ts 2)$ and $(1\ts 2\ts 3)(2)$, and \. $T_3$ \. is equal to
$$
  -\,a_{12} a_{21} a_{22} a_{33} + a_{12} a_{21} a_{23} a_{32}
 + a_{12} a_{22} a_{21} a_{33} - a_{12} a_{22} a_{23} a_{31}
 - a_{12} a_{23} a_{32} a_{21} + a_{12} a_{23} a_{31} a_{22}\ts.
$$
Factoring out~$a_{12}$ (the arc $1 \to 2$) leaves
$$
 R_3 \, = \, -\,a_{21} a_{22} a_{33} + a_{21} a_{23} a_{32} + a_{22} a_{21} a_{33}
 - a_{22} a_{23} a_{31} - a_{23} a_{32} a_{21} + a_{23} a_{31} a_{22}\ts.
$$
The bijection of the proof inserts the vertex~$2$ after the head~$1$.  For example,
the cycle decomposition \ts $\bC = (1\ts 2)(3) \in \cS_3$ \ts has weight \ts
$Q_{12}\ts Q_{21}\ts Q_{33} = a_{22}\ts a_{21}\ts a_{33}$ \ts and \ts
$\sign(\bC) = -1$, and it maps to the clow sequence \ts $(1\ts 2\ts 2)(3)$ \ts
of weight $a_{12}\cdot a_{22}\ts a_{21}\ts a_{33}$ and sign $+1$.
Carrying out all six gives \ts $R_3 = -\Mdet(Q)$, where
$$
 Q \, = \, \begin{pmatrix} a_{21} & a_{21} & a_{31} \\ a_{22} & a_{22} & a_{32} \\
 a_{23} & a_{23} & a_{33} \end{pmatrix}
$$
is obtained from~$A$ by setting the first column equal to the second.
By Lemma~\ref{t:cyc-rq}, we have \ts $\Mdet(Q) = \Cdet(Q)$ \ts in \ts
$\cB_3\ts$.  Finally, \ts $\Cdet(Q) = 0$ \ts in \ts $\cB_3\ts$, since the
first and second columns of~$Q$ coincide.
\end{ex}

\smallskip

\begin{prf}[Proof of Lemma~\ref{t:clow-rq}]
Comparing the definitions \eqref{eq:Mdet} and \eqref{eq:Vdet}, it suffices
to show that the signed sum of the weights of the clow sequences in \ts
$\cC_n^{\ts\tancirc}$ \ts vanishes in \ts $\cB_n\ts$.  Observe that every
clow sequence \ts $\bC \in \cC_n^{\ts\tancirc}$ \ts has a unique type and a
unique prefix as above.  Thus the set \ts $\cC_n^{\ts\tancirc}$ \ts is
a disjoint union, over the three types and over all possible common prefixes,
of the families of clow sequences in Lemmas~\ref{l:type1}, \ref{l:type2}
and~\ref{l:type3}.  By these lemmas, the signed sum over each family
vanishes in \ts $\cB_n\ts$.  Therefore, we have:
$$
\Vdet(A) \, - \, \Mdet(A) \ = \sum_{\bC \ts\in\ts \cC_n^{\ts\tancirc}}
 \sign(\bC) \. \weight(\bC) \, = \, 0
 \quad \text{in} \ \ \cB_n\ts,
$$
as desired.
\end{prf}

\medskip

\section{$\bq$-right-quantum matrices} \label{s:bq-RQ}

In this section we prove that \ts $\Cdet_\bq\ts$, \ts $\Mdet_\bq$ \ts and \ts
$\Vdet_\bq$ \ts are equal in \ts $\cB_n^\bq\ts$
$($Theorem~\ref{t:det-eq}$)$, the case of $\bq$-RQ
matrices, which is the most general case.  The proof combines the proofs of
the corresponding statements for the $\bq$-CF and RQ cases, given in
Sections~\ref{s:CF} and~\ref{s:RQ}, respectively.

\subsection{Cayley--Moore determinant equality} \label{ss:cyc-rqij}
We are now ready to prove the most general result of this type:

\smallskip

\begin{thm}[{\rm =Lemma~\ref{lem:C-M-det}}{}] \label{t:cyc-rqij}
Let \. $A=(a_{ij}) \in \cB_n^{\bq}$ \. be a $\bq$-RQ matrix.  Then we have:
$$
\Cdet_\bq(A) \, = \, \Mdet_\bq(A).
$$
\end{thm}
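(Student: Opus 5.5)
We combine the swap map $\Psi$ from the proof of Lemma~\ref{t:cyc-rq} with the $\bq$-weight bookkeeping of Lemma~\ref{l:interchange}. Throughout, each balanced word $a_{\la,\mu}$ is replaced by its $\bq$-weight $\weight_\bq(a_{\la,\mu})$ from \eqref{eq:bq-weight-word}. At the start, an $o$-sequence for $\si$ has $\la=(1,\ldots,n)$ and $\mu=\si$, so by \eqref{eq:bq-Cdet} we have
$$\Cdet_\bq(A)=\sum_\si(-1)^{\inv(\si)}\weight_\bq(a_{1\si_1}\cdots a_{n\si_n}).$$
At the end, a $p$-sequence gives $\weight_\bq(\bC)$, which appears in \eqref{eq:bq-Mdet} with $\sign(\bC)=(-1)^{\inv(\si)}$. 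The goal is therefore to show that at each step of $\Psi$, the signed sum $\sum\pm\weight_\bq$ over the current family of $q$-sequences is preserved in $\cB_n^\bq$.

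Each $\Psi$-step swaps two adjacent letters $a_{ki}a_{\ell j}$, with the right letter starting at the smaller or larger row. If the two letters have equal ends ($i=j$), the swap is a single-term move. I would check directly from the first relation in \eqref{eq:bq-RQ} that $\weight_\bq$ is invariant: the $\mu$-inversion set is unchanged, and the $\la$-inversion change contributes $q_{k\ell}^{\pm1}$.

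I expect this to be the main obstacle. On closer inspection, the first relation in \eqref{eq:bq-RQ} concerns a fixed row index $k$ with two different column indices, so the equal-end case does not fit it as stated. The equal-end case should instead follow from the second relation specialized to $i=j$, or else be forced by the structure of $\Psi$. I would handle this first by a small direct computation.

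The main case is $i\ne j$, say $i<j$ and $k<\ell$ after relabeling. Here $\Psi$ acts simultaneously on a pair $\alpha,\beta$ that differ by exchanging the ends $i,j$. Their signs are opposite, and their $\mu$-words differ by one transposition, so their $\bq$-weights differ by exactly $q_{ij}^{\pm1}$. The four words $a_{kj}a_{\ell i}$, $a_{ki}a_{\ell j}$, $a_{\ell i}a_{kj}$, $a_{\ell j}a_{ki}$ occurring before and after the swap all share the same surrounding prefix and suffix. So their $\bq$-weights are a common scalar times $q_{ij}^{\epsilon_\mu}q_{k\ell}^{\epsilon_\la}$, where the exponents record whether the local $\mu$-pair and $\la$-pair are inverted.

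The inversions between the local letters and the rest of the word do not change under the swap, since only an adjacent pair is reordered. Writing out the four scalars, I would verify that the signed combination
$$\weight_\bq(\alpha)-\weight_\bq(\beta)-\weight_\bq(\Psi\alpha)+\weight_\bq(\Psi\beta)$$
is precisely the common scalar times the second relation of \eqref{eq:bq-RQ}, possibly multiplied by an overall constant. The case $k>\ell$ is the same relation read in reverse. This gives the equality of signed sums at each step.

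Iterating until every sequence becomes a $p$-sequence then yields $\Cdet_\bq(A)=\Mdet_\bq(A)$. As in the RQ case, the pairing partner of a $q$-sequence is again a $q$-sequence, and both members are processed by the same swap, so the iteration is well defined.
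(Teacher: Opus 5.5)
Your proposal is correct and is essentially the paper's proof. You iterate the Konvalinka--Pak swap map $\Psi$ on $q$-sequences, perform each swap simultaneously on two partner sequences of opposite sign, and check that, with the $\bq$-weights absorbing the scalars, the identity $\weight_\bq(\alpha)-\weight_\bq(\beta)=\weight_\bq(\Psi\alpha)-\weight_\bq(\Psi\beta)$ is a common scalar times the second relation in \eqref{eq:bq-RQ}; your explicit four-scalar check is, if anything, more precise than the paper's wording. The equal-end case you flag as the main obstacle never occurs: a balanced sequence has distinct starting heights and distinct ending heights, so every swapped pair has $i\ne j$ and $k\ne\ell$, and only the second relation is ever used.
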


\smallskip

When all $q_{ij}=1$, we have \ts $\weight_\bq(\bC)=\weight(\bC)$, and
Theorem~\ref{t:cyc-rqij} becomes Lemma~\ref{t:cyc-rq}.  As in $\S$\ref{ss:cyc-rq}, the
summation is taken over all $p$-sequences.
\smallskip

\begin{prf}[Proof of Theorem~\ref{t:cyc-rqij}]
We repeat the bijective scheme of the proof of Lemma~\ref{t:cyc-rq}, now keeping track of
the $\bq$-weight.  Recall the map~$\Psi$ on $q$-sequences defined as follows.
Given a $q$-sequence whose longest
$p$-sequence prefix consists of~$x$ steps, with~$i$ the height at the end of step~$x$,
the map~$\Psi$ swaps the unique step starting at height~$i$ with its immediate left neighbor,
raising the rank by~$1$.

We carry out the swap together with a partner $q$-sequence, exactly as before: the partner has
the same rank and agrees with the first everywhere except that the ends of the two swapped steps
are interchanged, so if the first corresponds to a permutation~$\sigma$, the partner corresponds
to $\sigma' = \sigma\cdot\tau$ and the two carry opposite signs, $\sign\sigma' = -\sign\sigma$.
The two swapped steps are adjacent factors $a_{\ell j}\ts a_{ki}$ with $i \ne j$; say $i<j$.  By the
computation in the proof of Lemma~\ref{l:interchange}, interchanging $a_{\ell j}\ts a_{ki}$ multiplies
the $\bq$-weight~\eqref{eq:bq-weight-word} of each sequence by
$$
 c \, = \, q_{ij}\ts q_{k\ell}^{-1}\ \ (k<\ell), \qquad
 c \, = \, q_{ij}\ts q_{\ell k}\ \ (k>\ell), \qquad
 c \, = \, q_{ij}\ \ (k=\ell),
$$
and in each of these three cases relations~\eqref{eq:bq-RQ} give
$a_{\ell j}\ts a_{ki} = c\,a_{ki}\ts a_{\ell j}$ in the two summed sequences
at once.  Because the two partners carry opposite signs, the relations apply with the signs
intact, with the factor~$c$ cancelling against the $\bq$-weight exactly as in
Lemma~\ref{l:interchange}:
$$
 \mp\,\weight_\bq(\alpha) \, \pm \, \weight_\bq(\beta) \ = \
 \mp\,\weight_\bq(\Psi\alpha) \, \pm \, \weight_\bq(\Psi\beta)\ts.
$$

When a sequence is already a $p$-sequence, $\Psi$ leaves it unchanged.  Iterating the map
therefore rewrites the $\bq$-Cayley determinant in \ts $\cB_n^\bq$ \ts as the
$\bq$-Moore determinant, proving Theorem~\ref{t:cyc-rqij}.
\end{prf}

\smallskip

\subsection{Moore--Valiant determinantal equality} \label{ss:clow-rqij}

\begin{thm}[{\rm =Lemma~\ref{lem:M-V-det}}{}] \label{t:clow-rqij}
Let \. $A=(a_{ij}) \in \cB_n^{\bq}$ \. be a $\bq$-RQ matrix.  Then we have:
$$
\Mdet_\bq(A) \, = \, \Vdet_\bq(A).
$$
\end{thm}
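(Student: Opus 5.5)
The plan is to repeat the type-by-type cancellation of $\S$\ref{ss:collective}--\ref{ss:cancellation}, now keeping track of $\bq$-weights. Comparing \eqref{eq:bq-Mdet} and \eqref{eq:bq-Vdet}, it suffices to show that
$$
\sum_{\bC\in\cC_n^{\ts\tancirc}} \sign(\bC)\,\weight_\bq(\bC) \ = \ 0 \quad\text{in } \ \cB_n^\bq\ts.
$$
We keep the same reading order, the same defected clow, elements $e,e',h$, and the same partition of $\cC_n^{\ts\tancirc}$ into types~$1$, $2$, $3$ with common prefixes. The key simplification is the Factorization Lemma~\ref{l:factorization} together with Lemma~\ref{l:q-weight of clow}. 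These say that $\weight_\bq(\bC)$ is the ordinary weight times the scalar $\prod q_{\head(C)\ts c}^{-1}$, taken over the non-head entries $c$ of each clow $C$. Hence the scalar attached to a common prefix factors out exactly like the prefix word itself. We then run the double induction of Inductive Assumption~\ref{ia:types} with $\weight$ replaced by $\weight_\bq$.

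\textbf{Types~$1$ and~$2$.} In the analogues of Lemmas~\ref{l:type1} and~\ref{l:type2}, each term factors as (common prefix word) $\times$ (scalar) $\times$ (suffix). The prefix includes the closed clows $C_1,\ldots$ and the arcs $h\to t_1\to\cdots\to t_k\to e'$.

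The suffix scalars must be compared carefully. In the remaining part of the defected clow, the non-head entries still contribute $q_{h\ts c}^{-1}$. This holds both in the original sum and in the reduced type-$3$ sum obtained by deleting $C_1,\ldots$ and $t_1\cdots t_k$, because the head $h$ is unchanged. Thus the suffix scalars agree in the two sums, except for one term: in the merged sequence the extracted cycle contributes factors $q_{h\ts c}^{-1}$, whereas in the split sequence it contributes $q_{e\ts c}^{-1}$. This discrepancy arises identically in both sums, so the reduction to type~$3$ goes through verbatim.

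\textbf{Type~$3$.} In the analogue of Lemma~\ref{l:type3}, we factor out $a_{he'}q_{he'}^{-1}$ from $T_n$, leaving $R_n$. We then want to identify $R_n=-\Mdet_{\bq'}(Q)$, where $Q$ is $A$ with its first column replaced by its $k$-th column, as before. The parameter array $\bq'$ must be chosen so that $Q\in\cB_n^{\bq'}$ and the $\bq$-weights match under the bijection that deletes the distinguished~$k$.

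The natural choice is $q'_{1r}=q_{kr}$ for $r>k$ (and suitably inverted for $r<k$), with the remaining parameters unchanged. This is what makes column~$1$ of $Q$ behave like column~$k$ in relations \eqref{eq:bq-RQ}.

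\textbf{Main obstacle.} This bookkeeping step is where I expect the real difficulty. After deleting $k$, the clow with head $1$ loses the factor $q_{1k}^{-1}$, while the other non-head entries still contribute $q_{1c}^{-1}$. We need these to match the $\bq'$-weight of the resulting cycle decomposition of $Q$, up to a global scalar that is constant across all terms. If the scalars do not match term by term, the fallback is to absorb the discrepancy into a constant rescaling of the first column of $Q$, which preserves the $\bq$-RQ relations up to reparametrization.

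\textbf{Conclusion.} Once the identification holds, Theorem~\ref{t:cyc-rqij} gives $\Mdet_{\bq'}(Q)=\Cdet_{\bq'}(Q)$. The generalized Lemma~\ref{lem:equal-columns} for $\bq$-RQ matrices with two equal columns then gives $\Cdet_{\bq'}(Q)=0$, so $T_n=0$. Summing over types and prefixes completes the proof.
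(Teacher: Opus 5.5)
Your treatment of types~$1$ and~$2$ matches the paper's. The prefix word and its scalar factor out on the left, and since the head $h$ is unchanged, Lemma~\ref{l:factorization} gives the same suffix scalars for each $\bC$ and its reduced type-$3$ sequence. (The merged/split remark is not needed; also, in a split sequence the extracted cycle's scalars are $q^{-1}_{h'c}$ with $h'$ its minimum, not $e$.)

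The genuine gap is in type~$3$, at the step you yourself call the main obstacle, and the reparametrization you propose does not work.

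\textbf{The relations fail.} In $\cB_n^{\bq}$ a single array governs both the same-column relations (indexed by rows) and the cross relations (indexed by columns). Setting $q'_{1r}=q_{kr}$ therefore also imposes $Q_{cr}Q_{c1}=q'_{1r}\,Q_{c1}Q_{cr}$, i.e.\ $a_{cr}a_{c1}=q'_{1r}\,a_{c1}a_{cr}$, for every column $c\ge 2$. But $A$ satisfies this only with $q_{1r}$, so $Q\notin\cB_n^{\bq'}$ in general.

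\textbf{The weights no longer match.} By Lemma~\ref{l:q-weight of clow}, the factors $q_{1c}^{-1}$ of the clow with head~$1$ come from inversions of the \emph{row} word (the closing arc into~$1$). So $\Mdet_{\bq'}(Q)$ attaches $\prod_c (q'_{1c})^{-1}$ where $T_n$ has $\prod_c q_{1c}^{-1}$, both products running over the non-head entries of the first cycle. This ratio depends on which elements lie in the first cycle. No constant rescaling of the first column can absorb it, so your fallback does not close the argument.

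The missing idea is to keep $\bq$ unchanged, as the paper does. The label~$1$ is always the first letter of the column word, both in $\Cdet_\bq(Q)$ and in every cycle decomposition. Hence column parameters attached to label~$1$ never enter. Your own observation then gives the exact identity $\weight_\bq(\bC)=q_{1k}^{-1}\,a_{1k}\,\weight_\bq(\bC^*)$, where $\bC^*$ is the image cycle decomposition evaluated in~$Q$, and the sign is reversed. Thus $R_n=-\Mdet_\bq(Q)$.

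Your instinct is right on one point, which the paper passes over when asserting $Q\in\cB_n^\bq$: the cross relation between column~$1$ of~$Q$ and column~$c$ carries $q_{kc}$, not $q_{1c}$. Neither remaining step needs that relation, however.
\begin{itemize}
\item The map~$\Psi$ never moves the leading column-$1$ step. So $\Mdet_\bq(Q)=\Cdet_\bq(Q)$ uses only cross relations among columns $2,\ldots,n$, which are relations of~$A$ with the correct parameters.
\item To get $\Cdet_\bq(Q)=0$, move the leading column-$k$ letter rightward past columns $2,\ldots,k-1$. At each step, pair the two terms that differ by exchanging the row indices at the two swapped positions. By the cross relation of~$A$ for columns $c<k$, each such step multiplies every term by the same scalar $-q_{ck}^{-1}$. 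Once the two column-$k$ letters are adjacent, the terms cancel in pairs by $a_{kj}a_{ki}=q_{ij}\,a_{ki}a_{kj}$.
\end{itemize}
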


The proof occupies the rest of this section. We carry over the double induction of $\S$\ref{ss:cancellation} verbatim, replacing
$\weight(\bC)$ by $\weight_\bq(\bC)$ throughout.  The reduction of the support to an interval
$\{1, 2, \ldots, k\}$, the Inductive Assumption~\ref{ia:types}, and the type-$1$, type-$2$,
type-$3$ grouping all apply unchanged.

\begin{lemma} \label{l:type1-qij}
For any integer $n \ge 1$, the signed $\bq$-weighted sum of the clow sequences of type~$1$
supported on a fixed multiset with $n$ elements that share a common prefix
$$
 C_1\ts C_2 \cdots (h\ts t_1\ts t_2 \cdots t_k\ts e' \cdots) \cdots
$$
vanishes in \ts $\cB_n^\bq\ts$.
\end{lemma}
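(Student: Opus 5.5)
We mirror the proof of Lemma~\ref{l:type1}, working under Inductive Assumption~\ref{ia:types} carried over to the $\bq$-setting. The key new input is the Factorization Lemma~\ref{l:factorization}, together with Lemma~\ref{l:q-weight of clow}. Together they let us split off the $\bq$-weight of the common prefix as a scalar multiple of the common word.

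First, for every clow sequence $\bC$ in the family, write
$$
\bC \, = \, C_1\ts C_2 \cdots C_{p-1}\ts D\ts C_{p+1}\cdots,
$$
where $D=(h\ts t_1 \cdots t_k\ts e' \cdots)$ is the defected clow. By Lemma~\ref{l:factorization},
$$
\weight_\bq(\bC) \, = \, \weight_\bq(C_1)\cdots\weight_\bq(C_{p-1})\cdot \weight_\bq(D)\cdot \weight_\bq(C_{p+1})\cdots .
$$
The first $p-1$ factors are common to the whole family, so we pull them out on the left.

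Next, by Lemma~\ref{l:q-weight of clow}, the scalar in $\weight_\bq(D)$ is $\prod_{c} q_{hc}^{-1}$ over the non-head letters $c$ of $D$. We split this product as
$$
\prod_{s=1}^k q_{h t_s}^{-1} \;\cdot\; \prod_{c\in D\sm\{h,t_1,\ldots,t_k\}} q_{hc}^{-1}.
$$
The first part, together with the word $a_{ht_1}\cdots a_{t_k e'}$, is common to all terms and can be factored out on the left. What remains is
$$
a_{he'}\cdot \prod_{c} q_{hc}^{-1}\cdot (\text{suffix}),
$$
with the suffix weighted by $\bq$-weights of the later clows. This is exactly the $\bq$-weight of the type-$3$ clow sequence $(h\ts e' \cdots)\cdots$ obtained by deleting $C_1,\ldots,C_{p-1}$ and the block $t_1\cdots t_k$, once we factor $a_{he'}$ back out.

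Then we check that this deletion is a sign-compatible bijection onto the type-$3$ family with prefix $(h\ts e'\cdots)$ on the smaller multiset. The number of clows changes by the constant $p-1$ and the length by the constant $p'+k$, where $p'$ is the total length of $C_1,\ldots,C_{p-1}$. Hence $\sign$ changes by a uniform factor $(-1)^{k}$ across the family. The first repeat $e'$, $e$ is unchanged, because the reading goes right to left and reaches $D$'s tail before the deleted block. So deleted sequences are indeed of type~$3$ after relabelling the support as in Remark~\ref{r:support-interval}, and conversely every such type-$3$ sequence lifts back.

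By the inductive assumption (iii), whose $\bq$-version is the type-$3$ analogue of this lemma for smaller $n$, the signed $\bq$-weighted sum of these type-$3$ sequences vanishes. Left-multiplying by the common prefix word and scalars gives zero in $\cB_n^\bq$.

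The main obstacle is the relabelling step of Remark~\ref{r:support-interval} in the $\bq$-case. Shifting labels changes which $q_{ij}$ appear, so the vanishing statement must be read for the relabelled parameters. We would handle this by stating the inductive assumption for arbitrary parameter arrays, and applying it with $q$'s indexed by the actual labels. Relations \eqref{eq:bq-RQ} restricted to the surviving rows and columns form the $\bq$-RQ relations of the relabelled submatrix. The weights of the deleted head-adjacent arcs also need care, but they depend only on $h$ and the $t_s$, so no obstruction arises.
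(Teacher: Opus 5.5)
Your factorization is correct. Doing it clow by clow via Lemma~\ref{l:factorization} and Lemma~\ref{l:q-weight of clow} is a tidy alternative to the paper's bookkeeping, which goes through the word-level $\bq$-weight of the tail and the scalar $\gamma$ of \eqref{e:gamma-type1}. Set $W=\weight_\bq(C_1)\cdots\weight_\bq(C_{p-1})\cdot\prod_{s=1}^k q^{-1}_{ht_s}\cdot a_{ht_1}\cdots a_{t_ke'}$. Then $\weight_\bq(\bC)=W\,X(\bC)$ and $\weight_\bq(\ov{\bC})=a_{he'}\,X(\bC)$, with the same tail factor $X(\bC)$.

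The gap is in the last step. Inductive Assumption~\ref{ia:types}(iii) only gives $a_{he'}\sum_{\bC}\sign(\bC)\,X(\bC)=0$, while the type-$1$ sum is $W\sum_{\bC}\sign(\bC)\,X(\bC)$. Left-multiplying the type-$3$ identity by $W$ produces $W\,a_{he'}\sum_{\bC}\sign(\bC)\,X(\bC)$, which is not your sum. In $\bC$ the arc entering $e'$ is $a_{t_ke'}$, already inside $W$, and no letter $a_{he'}$ occurs; your sentence ``what remains is $a_{he'}\cdots$'' is where this slips in. So you need $\sum_{\bC}\sign(\bC)\,X(\bC)=0$ itself. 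That does not follow from $a_{he'}\sum_{\bC}\sign(\bC)\,X(\bC)=0$, because no cancellation law is available in the noncommutative algebra $\cB_n^\bq$.

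The missing ingredient is the stronger conclusion that the type-$3$ argument actually delivers. In the proof of Lemma~\ref{l:type3-qij}, the common left factor $\gamma_0\,a_{he'}$ is pulled out first. The stripped sum $R_n$ is then identified, up to a nonzero scalar, with $\Mdet_\bq(Q)$ for a matrix $Q$ with two equal columns, and is shown to vanish directly, not merely through $T_n=0$. Your $\sum_{\bC}\sign(\bC)\,X(\bC)$ is, up to a common nonzero scalar and sign, exactly this stripped sum for the reduced type-$3$ family. So you should state (iii) for the stripped sums, or cite $R=0$ from that proof, and only then left-multiply by $W$. This is the step the paper takes when it ``cancels'' $\gamma_0\,a_{he'}$, and it is justified there only because the type-$3$ proof yields $R_n=0$.

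Two minor points:
\begin{enumerate}
\item The uniform sign factor of $\bC\mapsto\ov{\bC}$ is $(-1)^{k+p'-(p-1)}$, not $(-1)^k$, since deleting the finished clows also contributes their sign. This is harmless, as only uniformity matters.
\item The relabelling you single out as the main obstacle is routine. The restriction of $A$ to the smaller support satisfies the $\bq$-RQ relations with the restricted parameters, so vanishing there transfers to $\cB_n^\bq$ along the obvious homomorphism, which carries $\bq$-weights to $\bq$-weights.
\end{enumerate}
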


\begin{prf}
We use the analogue of Inductive Assumption~\ref{ia:types} for the $\bq$-weighted sums.  Reading
the arcs $h\to t_1\to\cdots\to t_k\to e'$, every term factors at the word level as
$$
 \weight(\bC) \, = \, \Pi\, P\,\si(\bC)\,, \qquad
 \Pi=\weight(C_1)\weight(C_2)\cdots\,,\qquad
 P=a_{h t_1}\,a_{t_1 t_2}\cdots a_{t_k\ts e'}\,,
$$
with $\Pi$ (the finished clows) and $P$ (the path into~$e'$) common to all terms and $\si(\bC)$
the tail, the arc leaving~$e'$ onward.  Deleting $\Pi$ and the block $t_1\cdots t_k$ turns~$\bC$ into
a type-$3$ sequence $\ov{\bC}$ of order $m<n$, with defected clow $(h\ts e'\cdots)$ and
$\weight(\ov{\bC})=a_{h e'}\,\si(\bC)$.

By the Factorization Lemma~\ref{l:factorization}, the $\bq$-weight factors over clows and inversions between distinct clows cancel.
Sorting the surviving inversions of \ts $\weight_\bq(\bC)$ \ts by the blocks \ts $\Pi,P,\si(\bC)$ \ts gives:
\begin{equation} \label{e:type1-factor}
 \weight_\bq(\bC) \, = \, [\weight_\bq(C_1)\,\weight_\bq(C_2)\cdots]\;\cdot\;P\;\cdot\;\gamma\;\cdot\;\weight_\bq\bigl(\si(\bC)\bigr),
\end{equation}
where \ts $\weight_\bq(C_j)$ \ts and \ts $\weight_\bq(\si(\bC))$ \ts
are the $\bq$-weights of the finished clows and the tail, and
$\gamma$ is the \emph{pure scalar}
\begin{equation} \label{e:gamma-type1}
 \gamma \, = \, \prod_{x \in \set{t_1, \ldots, t_k,\, e'}} q^{-1}_{h\ts x}\;\cdot\;
 \prod_{\substack{y \in S\\ y < e'}} q^{-1}_{y\ts e'}\,,
\end{equation}
with $S$ the multiset of tail vertices.  In other words, here we remove
the support of the finished-clow vertices and one copy each of
\ts $h, t_1, \ldots, t_k, e'.$  Indeed, only the cross-block inversions with left
endpoint in~$P$ remain after Lemma~\ref{l:factorization}: the head~$h$ gives no column inversion; each
column $t_i$ cancels against its row occurrence except for $q^{-1}_{h\ts t_i}$ (the closing arc
at~$h$); and the row $e'$ of $a_{t_k\ts e'}$ leaves $q^{-1}_{h\ts e'}$ and one $q^{-1}_{y\ts e'}$ per
tail vertex $y<e'$ (whether $y$ lies in the defected clow or a later one).  No factor
of~\eqref{e:gamma-type1} depends on the order of $\si(\bC)$, so $\gamma$ is common.

By~\eqref{e:type1-factor}, with \. $\weight_\bq(C_1)\weight_\bq(C_2)\cdots$, \. $P$ \ts and \ts
$\gamma$ \ts common, we have:
$$
 \sum_{\bC} \sign(\bC)\. \weight_\bq(\bC) \, = \, [\weight_\bq(C_1)\,\weight_\bq(C_2)\cdots] \cdot P \cdot \gamma \cdot
 \sum_{\bC} \. \sign(\bC)\. \weight_\bq\bigl(\si(\bC)\bigr).
$$
The sign-preserving bijection \. $\bC\mapsto\ov{\bC}$ \. identifies the inner sum with the type-$3$ sum of
order $\ts m<n$.  As in the proof of Lemma~\ref{l:type3-qij}, we have \.
$\weight_\bq(\ov{\bC})=\gamma_0\,a_{h e'}\,\weight_\bq(\si(\bC))$, with \ts $\gamma_0,a_{h e'}$ \ts
common.  Thus, the sum above vanishes in \ts $\cB_n^\bq$ \ts by Lemma~\ref{l:type3-qij}.
Cancelling \ts $\gamma_0\,a_{h e'}\ts$, gives:
$$\sum_{\bC} \. \sign(\bC)\. \weight_\bq(\si(\bC)) \, = \, 0 \quad \ \text{in \ \, $\cB_n^\bq\..$}
$$
Multiplying on the left by \. $\weight_\bq(C_1)\weight_\bq(C_2)\cdots P\,\gamma$, we conclude that
the original sum vanishes in \ts $\cB_n^\bq\ts$, proving the lemma.
\end{prf}

\begin{ex}
\normalfont \label{ex:type1-qij}
On the support $\set{1,2,3,4,4,5,6,7,8,9}$, take the type-$1$ family whose defected clow is first
and begins $(1\ts 8\ts 2\ts 7\ts 4\,\cdots 4\cdots)$, so $h=1$, $t_1 t_2 t_3 = 8\ts 2\ts 7$, and
$e'=e=4$.  There are no finished clows, so $\Pi$ is empty and the common prefix word is
$P = a_{1\ts 8}\ts a_{8\ts 2}\ts a_{2\ts 7}\ts a_{7\ts 4}$ (the arcs $1\to 8\to 2\to 7\to 4$). Each weight word in this family begins with~$P$.
Indeed, this follows from the fact that the second~$4$
and the vertices $3,5,6,9$ can be distributed in every
admissible way.  Now, since $3$ is the only tail vertex
below~$e'=4$, formula~\eqref{e:gamma-type1} gives the common scalar
$$
 \gamma \, = \, q^{-1}_{1\ts 8}\ts q^{-1}_{1\ts 2}\ts q^{-1}_{1\ts 7}\ts q^{-1}_{1\ts 4}\;\cdot\; q^{-1}_{3\ts 4}\,.
$$
For the member
$$
 \bC\. = \. (1\ts 8\ts 2\ts 7\ts 4\ts 3\ts 5\ts 4\ts 6\ts 9)\,, \qquad
 \si(\bC) \. = \. a_{4\ts 3}\ts a_{3\ts 5}\ts a_{5\ts 4}\ts a_{4\ts 6}\ts a_{6\ts 9}\ts a_{9\ts 1}\,,
$$
one checks \. $\weight_\bq(\bC) = P\cdot\gamma\cdot \weight_\bq\bigl(\si(\bC)\bigr)$.

Hence the signed sum factors as \.
$P\.\gamma\.\cdot \. \sum_{\bC} \sign(\bC)\. \weight_\bq(\si(\bC))$, whose inner factor is the type-$3$ sum (with its prefix removed)
on $\set{1,3,4,4,5,6,9}$ and vanishes by Lemma~\ref{l:type3-qij}.
\end{ex}

\smallskip

\begin{lemma} \label{l:type2-qij}
For any integer $n \ge 1$, the signed $\bq$-weighted sum of the clow sequences of type~$2$
supported on a fixed multiset with $n$ elements that share a common prefix
$$
 C_1\ts C_2 \cdots (h\ts e' \cdots) \cdots
$$
vanishes in \ts $\cB_n^\bq\ts$.
\end{lemma}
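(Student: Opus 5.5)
The plan is to mirror the proof of Lemma~\ref{l:type1-qij}, with the role of the path $P$ now played by the single arc $a_{h e'}$. Every clow sequence in the family begins with the common finished clows $C_1 C_2\cdots$ (at least one of them, since the family is of type~$2$), followed by the defected clow $(h\ts e'\cdots)$. At the word level we write
$$
 \weight(\bC) \, = \, \Pi\;a_{h e'}\;\si(\bC)\,, \qquad \Pi=\weight(C_1)\weight(C_2)\cdots\,,
$$
where $\si(\bC)$ is the tail starting with the arc leaving~$e'$. We then delete $\Pi$. This leaves a clow sequence $\ov{\bC}$ whose first clow is the defected clow $(h\ts e'\cdots)$, so no clow with smaller head precedes it. The first repetition read from the right is unchanged: deleting clows to the left of the defected clow does not affect the right-to-left reading up to~$e'$. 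Hence $\ov{\bC}$ is of type~$3$ on the smaller support. We will check that $\bC\mapsto\ov{\bC}$ is a bijection onto the type-$3$ family with prefix $(h\ts e'\cdots)$ on the multiset with the support of $\Pi$ removed. It changes the sign by the fixed factor $(-1)^{\length(\Pi)-k(\Pi)}$, which is common to all terms.

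Next we handle the scalars. By the Factorization Lemma~\ref{l:factorization}, $\weight_\bq(\bC)=\weight_\bq(C_1)\weight_\bq(C_2)\cdots\weight_\bq(D)\cdots$, where $D$ is the defected clow. The finished-clow factors are common to the family. Moreover $\weight_\bq(\ov{\bC})$ consists of exactly the remaining factors, again by Lemma~\ref{l:factorization}, since clow $\bq$-weights depend only on the clows themselves (Lemma~\ref{l:q-weight of clow}). So, unlike the type~$1$ case, no new cross-block scalar $\gamma$ arises. We obtain
$$
 \sum_{\bC}\sign(\bC)\,\weight_\bq(\bC) \, = \, \pm\,\weight_\bq(C_1)\weight_\bq(C_2)\cdots\;\sum_{\ov{\bC}}\sign(\ov{\bC})\,\weight_\bq(\ov{\bC})\,.
$$

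The inner sum is the signed $\bq$-weighted sum of type-$3$ sequences with common prefix $(h\ts e'\cdots)$ on a multiset of fewer than $n$ elements. After relabelling the support to an interval, as in Remark~\ref{r:support-interval}, it vanishes by the inductive assumption for type~$3$. This relies on the $\bq$-analogue of Lemma~\ref{l:type3}, which the paper proves as Lemma~\ref{l:type3-qij} via reduction to $\Mdet_\bq$ of a matrix with two equal columns, together with Theorem~\ref{t:cyc-rqij} and Lemma~\ref{lem:equal-columns}. Multiplying on the left by the common prefix then gives the claim.

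The step requiring care is the relabelling. A monotone relabelling of the support changes the indices of the $q_{ij}$, and the relations~\eqref{eq:bq-RQ} must be matched with the relabelled parameters. The remedy is to apply the type-$3$ statement to the submatrix indexed by the surviving labels, with $\bq$ restricted to them, rather than literally renumbering. Restricted submatrices of $\bq$-RQ matrices are again $\bq$-RQ, so this is legitimate. We also need to verify that the entries of $\si(\bC)$ lie in that submatrix. The arc closing the defected clow returns to~$h$, and $h$ belongs to the remaining support, so this holds.
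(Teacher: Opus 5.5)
Your proof is correct and follows essentially the paper's route: the paper only says the argument is completely analogous to Lemma~\ref{l:type1-qij}, namely strip off the common finished clows $C_1C_2\cdots$, observe that what remains is a type-$3$ family with prefix $(h\ts e'\cdots)$ on a smaller multiset, and invoke Lemma~\ref{l:type3-qij}. Your direct use of the Factorization Lemma~\ref{l:factorization}, giving $\weight_\bq(\bC)=\weight_\bq(C_1)\weight_\bq(C_2)\cdots\weight_\bq(\ov{\bC})$, is a genuine streamlining: it is legitimate because nothing inside the defected clow is deleted, so $\ov{\bC}$ is itself a clow sequence, and it replaces the tail-and-$\gamma$ bookkeeping of the type-$1$ proof; your restriction of $\bq$ to the surviving labels also makes explicit a relabelling point that the paper glosses over.
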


\smallskip

The proof is completely analogous to the proof of Lemma~\ref{l:type1-qij}.

\smallskip

\begin{lemma} \label{l:type3-qij}
For any integer $n \ge 1$, the signed $\bq$-weighted sum of the clow sequences of type~$3$
supported on a fixed multiset with $n$ elements that share a common prefix
$$
 (h\ts e' \cdots) \cdots
$$
vanishes in \ts $\cB_n^\bq\ts$.
\end{lemma}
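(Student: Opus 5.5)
We follow the proof of Lemma~\ref{l:type3} and track the scalars from the $\bq$-weight. By Remark~\ref{r:type3-support} and Remark~\ref{r:support-interval}, we may assume the support is $\{1,\ldots,k,k,\ldots,n\}$ with the single repeated pair $e'=e=k$ and head $h=1$. Every clow sequence in the family has weight word beginning with $a_{hk}$. So we write $\weight(\bC)=a_{hk}\,w(\bC)$ and factor out this letter.

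The first step is the scalar bookkeeping. By Lemma~\ref{l:q-weight of clow} and the Factorization Lemma~\ref{l:factorization}, $\weight_\bq(\bC)$ is $\weight(\bC)$ times $\prod q^{-1}_{\head(C)\,x}$, taken over all non-head entries $x$ of every clow. The defected clow has head $h=1$ and contains the distinguished $k$, so it contributes the common factor $\gamma_0:=q^{-1}_{1k}$. Hence $\weight_\bq(\bC)=\gamma_0\,a_{hk}\,\widetilde w(\bC)$, where $\widetilde w(\bC)$ is $w(\bC)$ times $\prod q^{-1}_{\head(C)\,x}$ over the remaining non-head entries. This is the same $\gamma_0$ used in the proof of Lemma~\ref{l:type1-qij}.

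Next we use the bijection from the proof of Lemma~\ref{l:type3}. Deleting the distinguished $k$ after the head sends a type-$3$ sequence to a cycle decomposition $\bC^\flat\in\cS_n$ of the matrix $Q$. Here $Q$ is obtained from $A$ by replacing column $1$ with column $k$, so $Q_{1r}=a_{kr}$. The bijection reverses signs and preserves the letter words. We check that it also preserves $\bq$-weights. The non-head entries of $\bC^\flat$ are exactly those of $\bC$ with the distinguished $k$ removed, and their heads are unchanged. So Lemma~\ref{l:q-weight of clow} gives $\widetilde w(\bC)=\weight_\bq(\bC^\flat)$ computed for $Q$, and therefore $R_n:=\sum\sign(\bC)\,\widetilde w(\bC)=-\Mdet_\bq(Q)$.

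It remains to verify $Q\in\cB_n^\bq$. The relations \eqref{eq:bq-RQ} involve only pairs of columns $k<\ell$, and their coefficients depend on the original column indices. Column $1$ of $Q$ is a copy of column $k$ but is labelled $1$, so the relations between it and a column $\ell$ carry the factor $q_{1\ell}$ rather than $q_{k\ell}$. This is \textbf{the main obstacle}. Since $q_{1\ell}\neq q_{k\ell}$ in general, the substitution $a_{1r}\mapsto a_{kr}$ does not obviously preserve the $\bq$-RQ relations.

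To handle this, we first try restricting to the column index set $\{1,\ldots,n\}\setminus\{1\}$ together with a relabelled copy. The goal is to realize $\Mdet_\bq(Q)$, after rescaling by a monomial in the $q_{ij}$, as a $\bq'$-Moore determinant of a genuine $\bq'$-RQ matrix with two equal columns. The new parameters are $q'_{1\ell}:=q_{k\ell}$ and $q'_{ij}=q_{ij}$ otherwise. The discrepancy between $\bq$ and $\bq'$ consists only of factors $q_{1x}/q_{kx}$ attached to non-head entries in clows headed at $1$. In the cycle decompositions of $Q$, each such factor is determined by the clow's vertex set, not its order. We would need to check that these factors combine into one scalar common to all terms. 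If they do not, we fall back on partitioning the sum by the vertex set of the clow headed at $1$ and treating each part separately.

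Once $Q$ is recognised as $\bq'$-RQ, Theorem~\ref{t:cyc-rqij} gives $\Mdet_{\bq'}(Q)=\Cdet_{\bq'}(Q)$. Lemma~\ref{lem:equal-columns} then gives $\Cdet_{\bq'}(Q)=0$, so $R_n=0$. Multiplying on the left by $\gamma_0\,a_{hk}$ completes the proof.
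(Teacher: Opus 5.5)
\paragraph{Verdict.} There is a genuine gap: you never prove $\Mdet_\bq(Q)=0$, and the $\bq'$ repair you propose for this step does not work.

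\paragraph{What is right, and where it breaks.} Your reduction is sound up to $R_n=-\Mdet_\bq(Q)$. Splitting off $q_{1k}^{-1}$ via Lemma~\ref{l:q-weight of clow} even makes the weights match exactly under the deletion bijection. You are also right that $Q$ need not lie in $\cB_n^\bq$. The paper's proof asserts membership only because the columns of $Q$ are columns of $A$. But the crossing relation between columns $1$ and $\ell$ of $Q$ must carry $q_{1\ell}$, while $A$ supplies $q_{k\ell}$, or $q_{\ell k}^{-1}$ when $\ell<k$ since the two columns then exchange roles.

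The repair fails for three reasons.
\begin{enumerate}
\item In \eqref{eq:bq-RQ} a single array supplies both the column factor and the row factors $q_{ij}$. Redefining $q'_{1\ell}$ therefore breaks the relations $Q_{c\ell}\,Q_{c1}=q'_{1\ell}\,Q_{c1}\,Q_{c\ell}$, which the entries of $Q$ satisfy with $q_{1\ell}$. So $Q$ is not $\bq'$-RQ either.
\item The termwise ratio of $\Mdet_{\bq'}(Q)$ to $\Mdet_\bq(Q)$ is a product of $q_{1x}/q'_{1x}$ over the non-head vertices of the first cycle. This varies with that vertex set, so it is not a common scalar.
\item The fallback cannot help, because the pieces indexed by this vertex set do not vanish separately. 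The piece with first cycle $(1)$ is $a_{k1}$ times the $\bq$-Moore determinant of the block of $A$ on $\{2,\ldots,n\}$, which is nonzero in general.
\end{enumerate}

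\paragraph{The missing observation.} $Q$ never needs to be $\bq$-RQ.
\begin{enumerate}
\item For every cycle decomposition, the word $\la^\bC$ begins with the smallest label $1$. This label contributes nothing to $\INV(\la^\bC)$, so the label of the first column of $Q$ never enters $\Mdet_\bq(Q)$.
\item In the rewriting by $\Psi$ (proof of Theorem~\ref{t:cyc-rqij}), the letter $Q_{1\sigma_1}$ opens both the $o$-sequence and the $p$-sequence of $\sigma$, and it belongs to every $p$-prefix. So it is never swapped. Only crossing relations among columns $2,\ldots,n$ of $Q$ are used, and these are genuine columns of $A$ with their own labels. Thus $\Mdet_\bq(Q)=\Cdet_\bq(Q)$ in $\cB_n^\bq$.
\item $\Cdet_\bq(Q)=0$ follows from the relations of $A$ alone. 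Let $\psi_1,\ldots,\psi_n$ commute with all $a_{cr}$ and satisfy $\psi_i^2=0$ and $\psi_j\psi_i=-q_{ij}^{-1}\psi_i\psi_j$ for $i<j$. Put $\phi_c=\sum_r a_{cr}\psi_r$. Then \eqref{eq:bq-RQ} gives $\phi_c^2=0$ and $\phi_\ell\phi_c=-q_{c\ell}^{-1}\phi_c\phi_\ell$ for $c<\ell$. Also $\Cdet_\bq(Q)\,\psi_1\cdots\psi_n=\phi_k\phi_2\cdots\phi_n$. Moving the first $\phi_k$ rightward past $\phi_2,\ldots,\phi_{k-1}$ only produces nonzero scalars and ends with the factor $\phi_k^2=0$.
\end{enumerate}
The last step is the standard proof of Lemma~\ref{lem:equal-columns}, and it nowhere requires $Q$ itself to satisfy \eqref{eq:bq-RQ}.
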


\begin{prf}
As in the proof of Lemma~\ref{l:type3}, we may assume that the support is
$\{1, 2, \ldots, k, k, \ldots, n\}$ with the single repeated pair $e' = e = k$,
and head $h = 1$.  Let
$$
 T_n \, = \, \sum_{\bC} \, \sign(\bC) \. \weight_\bq(\bC)
$$
be the signed $\bq$-weighted sum of all type-$3$ clow sequences supported on this multiset.
Every such sequence has its defected clow of the form $(h\ts e' \cdots)$, so its weight begins
with the letter $a_{h e'}$, the arc $h \to e'$.  We claim that this leading letter carries a
$\bq$-weight factor common to all terms.  Indeed, in $\weight_\bq(\bC)$ the leading position
contributes, through~\eqref{eq:bq-weight-word}, the inversions it forms with the rest of the word.  It
forms no column inversion, since its column index~$h$ is the head and hence the smallest column
present; and it forms a row inversion with each later position of smaller row, contributing
$\prod q^{-1}_{r\ts e'}$ over the rows $r < e'$ occurring after it.  As all type-$3$ sequences in
$T_n$ share the same support, this product is the same for every term.  Writing \.
$\weight_\bq(\bC) = \gamma_0\, a_{h e'}\, w_\bq(\bC)$, where \. $\gamma_0 = \prod_{r<e'} q^{-1}_{r\ts e'}$
\. is this common scalar, we factor the common left factor \. $\gamma_0\. a_{h e'}$ \. out:
$$
 T_n \, = \, \gamma_0\, a_{h e'}\, R_n\,, \qquad R_n \, = \, \sum_{\bC} \, \sign(\bC) \. w_\bq(\bC).
$$
This is the empty-block case ($k=0$) of the common scalar~$\gamma$ of
Lemma~\ref{l:type1-qij}.  Indeed, with no $t_i$ between $h$ and $e'$ and no finished clows, the prefix is
the single arc $a_{h e'}$, the head-versus-block product of~\eqref{e:gamma-type1} disappears, and
its second product \. $\prod_{y<e'} q^{-1}_{y\ts e'}$ \. becomes \ts $\gamma_0\ts.$

We claim that
$$
 R_n \, = \, -\,\Mdet_\bq(Q)\ts,
$$
where \ts $Q=(Q_{cr})$ \ts is the \ts $n\times n$ \ts matrix obtained from~$A$ by replacing
its first column with a copy of its $k$-th column:
$$
 Q_{1r} \, = \, a_{kr}\,, \qquad Q_{cr} \, = \, a_{cr} \quad \text{for all} \quad 2 \le c \le n\ts.
$$
Each entry of~$Q$ is one of the original entries~$a_{cr}$, and the columns of~$Q$ form a
sub-collection of the columns of~$A$; hence the entries of~$Q$ satisfy the same column and
crossing relations~\eqref{eq:bq-RQ}, so \ts $Q\in\cB_n^\bq$ \ts is again a $\bq$-RQ matrix.

The same sign-reversing, weight-matching bijection as in the proof of Lemma~\ref{l:type3}
proves the claim.  Indeed,
deleting the distinguished repeated~$k$ that follows the head of the defected clow turns a
type-$3$ sequence on $\{1, \ldots, k, k, \ldots, n\}$ into a cycle decomposition \ts $\bC\in\cS_n\ts$,
the split arcs $h \to k \to (\cdot)$ collapsing to a single arc whose letter belongs to the first
column of~$Q$, and conversely; the $\bq$-weights match under this bijection as in the
proof of Lemma~\ref{t:clow-cf-qij}.

By Theorem~\ref{t:cyc-rqij} applied to~$Q$, we have \ts
$\Mdet_\bq(Q) = \Cdet_\bq(Q)$ \ts in \ts $\cB_n^\bq\ts$.
The first and the $k$-th columns of~$Q$ are equal, both being the $k$-th column of~$A$, and the
$\bq$-Cayley determinant of a $\bq$-RQ matrix with two equal columns vanishes
in \ts $\cB_n^\bq\ts$ by Lemma~\ref{lem:equal-columns}.  We conclude:
$$
 R_n \, = \, -\,\Mdet_\bq(Q) \, = \, -\,\Cdet_\bq(Q) \, = \, 0
 \quad \text{in} \ \ \cB_n^\bq\..
$$
Multiplying by the common left factor \. $\gamma_0\, a_{h e'}$,
and noting that scalar~$\gamma_0$ is nonzero, we conclude that
\ts $T_n = \gamma_0\, a_{h e'}\, R_n$ \ts vanishes in \ts $\cB_n^\bq\ts$.
\end{prf}

Together, Lemmas~\ref{l:type1-qij}, \ref{l:type2-qij} and~\ref{l:type3-qij} show that the signed
$\bq$-weighted sum of all clow sequences in \ts $\cC_n^{\ts\tancirc}$ \ts vanishes in \ts
$\cB_n^\bq\ts$, which proves Theorem~\ref{t:clow-rqij}.
\smallskip

{\small
\begin{rem}
\normalfont
When all $q_{ij}\gets 1$, the whole section specializes to $\S$\ref{ss:clow-rq}. Indeed,
the relations \eqref{eq:bq-RQ} become \eqref{eq:rq}, the $\bq$-weight $\weight_\bq$ reduces to
$\weight$, and Theorems~\ref{t:cyc-rqij} and~\ref{t:clow-rqij} become Lemmas~\ref{t:cyc-rq}
and~\ref{t:clow-rq}.  Setting all $q_{ij}\gets q$ recovers the $q$-right-quantum case,
but since the proof do not significantly simplify we opted for the general case.
\end{rem}
}

\medskip


\section{Final remarks and open problems} \label{s:finrem}

\subsection{}\label{ss:finrem-Pfaff}
For a \emph{skew-symmetric matrix} \ts $A=-A^T$, computing the Pfaffian plays
the same role as the determinant of the usual matrix.  Numerically,
this is an equivalent problem since \ts $\det(A)=\pf(A)^2$.  However, computing
the Pfaffian as a polynomial of matrix entries is different from computing
the permanent.  A variation on the Mahajan--Vinay algorithm was given in \cite{MSV04},
see also \cite{Rote01,Urb10}.  It would be interesting to see if this algorithm
can be extended to compute the Pfaffian of right-quantum matrices.

\subsection{}\label{ss:finrem-CH}
In~\cite{MV99}, Mahajan and Vinay wrote that their algorithm was partly inspired
by combinatorial proofs of the \emph{Cayley--Hamilton theorem} (CHT) by Rutherford,
Straubing and Zeilberger \cite{Rut64,Str83,Zei85}.  Most recently,
Ikenmeyer \cite{Ike25} constructed a new ABP for the
determinant.  He suggested that his approach is even closer to Straubing's and
Zeilberger's combinatorial arguments.  It would be interesting to see if there is a proof
of our Main Theorem~\ref{t:main} in the style of Ikenmeyer's construction,
as it would potentially avoid using our involved combinatorial arguments.

In fact, several (possibly, equivalent) versions of a $\bq$-CHT are already present
or implicit in the literature.  Indeed, both the RQ and $q$-RQ versions of the CHT
are given in \cite{CF08,CFR09,CFRS14}.  For $\bq$-RQ matrices, one can take
a fully noncommutative version of the CHT by Gelfand and Retakh for quasideterminants
\cite{GR92,GGRW05}, translated into the language of $\bq$-Cayley determinants
following \cite{GR91,ER99,KP07}.
Alternatively, one can use the ``$\bq=1$'' principle by Konvalinka--Pak \cite[Lemma~12.3]{KP07},
generalizing the ``$q=1$'' principle by Foata--Han \cite{FH08}, to derive the $\bq$-CHT
directly from the RQ version.

\subsection{}\label{ss:finrem-imm}
For an irreducible character \ts $\chi^\la: S_n \to \zz$, where \ts $\la$ \ts
is a partition of~$n$, an \defn{immanant} \ts of the matrix \ts $A=(a_{ij})$ \ts
is defined as follows:
$$
\Imm_\la(A) \, := \, \sum_{\si \in S_n} \. \chi^\la(\si) \, a_{1 \si_1} \. \cdots \. a_{n \si_n}\..
$$
We refer to \cite{GJ92} for an accessible introduction to immanants and
their applications to algebraic combinatorics.  Note that the immanants
generalize both the permanent and the determinant, for the trivial and
sign characters, respectively.  The algorithmic and complexity properties
of immanants have also been extensively studied, see \cite[$\S$7]{Bur00-book}
for an overview, and \cite{Cur21,MR26} for some notable recent results.

Given the negative results in the papers cited above, it would be
interesting to see if one could extend our approach to obtain positive
results for computing quantum immanants on quantum matrices.  As a starting
point, we refer to Okounkov \cite{Oko96} and Konvalinka \cite{Kon12}, for
two different definitions of quantum immanants.

\subsection{}\label{ss:finrem-R}
Arguably, matrix inversion is the real reason the noncommutative determinant
can be computed efficiently.  This is the philosophy underlying the theory
of quasideterminants, see \cite{GGRW05}.  Matrix inversion exists, in particular,  
when there is an $R$-matrix which satisfied the quantum Yang--Baxter equation, 
see \cite{ESS00,ER99}.  We speculate that the determinantal 
identities in Theorem~\ref{t:det-eq} are in fact consequences of algebraic 
properties guaranteed by the $R$-matrices.  More concretely, we conjecture that
Theorem~\ref{t:det-eq} can be extended to all generalized Belavin--Drinfeld
triples \cite{ESS00}, see also \cite{Hod15} for the introduction.  

\vskip.7cm

{\small
\subsection*{Acknowledgements}
We are grateful to Alexander Barvinok, Pavel Etingof, 
Christian Ikenmeyer, Pasha Pylyavskyy, Vladimir Retakh and
Avi Wigderson, for interesting discussions and helpful remarks.
The first author (IP) was partially supported by the NSF grant CCF-2302173.

\vskip.5cm

\subsection*{Use of AI}
During the preparation of this manuscript, the authors used Claude
(Anthropic, Claude Opus~4.8) to make numerous experiments in the
combinatorics of words, in our search for the desired sign-reversing
involution.  We also used it to make figures and proof check the examples.
Additionally, we used ChatGPT (OpenAI, GPT-5.5) for historical
background and references.  All AI-generated suggestions were
reviewed and, where appropriate, modified before inclusion in
the manuscript. More often than not, they proved false or unhelpful,
and were simply ignored.
}
\medskip

\newpage

\vskip.5cm

\end{document}